\newtheorem{theorem}{Theorem}[section]
\newtheorem{lemma}{Lemma}[section]
\newtheorem{proposition}[theorem]{Proposition}
\definecolor{darkgreen}{cmyk}{1,0,1,.2}
\renewcommand\paragraph[1]{\medskip\textbf{#1} }
\begin{document}
\title[the sister of picard modular group]{generators  of the sister of Euclidean Picard modular  group}
\author{BaoHua Xie} \address{College of Mathematics and Econometrics \\Hunan University\\ Changsha, 410082, P.R.China}
\email{xiexbh@hnu.edu.cn}

\subjclass[2010]{32M05, 22E40; Secondary 32M15}

\keywords{Complex hyperbolic space, Generators, Picard modular groups.}

\begin{abstract}
The sister of Eisenstein-Picard modular group was described in \cite{p1}. In this paper we give a similar definition of the sister of the Euclidean-Picard
modular group and find its generators by using a geometric method.
\end{abstract}

\maketitle

  \section{Introduction}  
  An important and difficult problem  in complex hyperbolic geometry is to determine  the generators systems and finite presentations for the lattices in $\mathbf{ PU}(2,1)$.  
 The most frequent method is to construct a fundamental domain and use the Poincar\'e's polyhedron theorem. It should be pointed out that, even though fundamental domains  of arithmetic lattices are well known to exist,their explicit determination in the context of the non-constant curvature setting of complex hyperbolic space is a difficult task. There is also a simple algorithm for one to get the generators of some lattices. In particular, we do not need to know the fundamental domain for the action of  lattice on the complex hyperbolic plane. See the works \cite{fflp,wxx, xwj1}. However, it is hard to get more information of the lattices by this method.

  There are not so many examples of explicit arithmetic lattices or non- arithmetic lattices of   $\mathbf{ PU}(2,1)$, See, for example, the works of
Mostow\cite{m},  Falbel and Parker\cite{fp}, Parker\cite{p2}, Deraux, Falbel and Paupert\cite{dfp}, Deraux, Parker and Paupert\cite{dpp}.  Perhaps the first
example for the complex hyperbolic space in two complex dimensions was due to Picard \cite{pi1,pi2}, which we now call Picard modular group.

  Let $\mathcal{O}_d$ be the ring of integers in the quadratic imaginary number field $\mathbb{Q}(i\sqrt{d})$, where $d$ is
a positive square-free integer. If $d\equiv1,2\ (mod\ 4)$, then $\mathcal{O}_d=\mathbb{Z}[i\sqrt{d}]$ and if $d\equiv3\ (mod\ 4)$, then $\mathcal{O}_d=\mathbb{Z}[\omega_d]$, where $\omega_d=(1+i\sqrt{d})/2 $. The Picard modular groups are subgroups of $\mathbf{ PU}(2,1)$ with entries in  $\mathcal{O}_d$ and written $\mathbf{ PU}(2,1;\mathcal{O}_d)$. The Picard modular groups $\mathbf{ PU}(2,1;\mathcal{O}_d)$ can be considered as the natural algebraic generalization of the classical modular group  $\mathbf{ PSL}(2,\mathbb{Z})$.

For $d=1,2,3,7,11$ the rings $\mathcal{O}_d$ have Euclidean algorithms and the corresponding groups $\Gamma^{(d)}=\mathbf{ PU}(2,1;\mathcal{O}_d)$ are the Euclidean Picard modular groups. As is to be expected, these
are much closer in properties to the modular group than in the non-Euclidean cases.  Of particular interest are the group $\Gamma^{(1)}$ and $\Gamma^{(3)}$, which are called the 
Gauss-Picard  modular group and  the Eisenstein-Picard modular group.  However, the explicit algebraic or geometric properties such as generators, fundamental domains and presentations are still unknown in all but very few cases. More specifically, presentation and fundamental domains have been obtained for $\Gamma^{(d)}$ when $d=1$   and $d=3$.
 More recently, generators for $\mathbf{ PU}(2,1;\mathcal{O}_d)$ with $ d=2, 7,11$ were given in \cite{z2}.
 
  In this paper, we will use the geometric arguments in \cite{fp, xwj2, z2} to obtain the generators of the sister of some Euclidean-Picard modular groups. In \cite{p1}, the sister of the Einsenstein-Picard modular group $\mathbf{ PU}(2,1;\mathcal{O}_3)$  was defined explicitly. It's fundamental domain and generators was obtained in \cite{z1}. Similarly, we can also define the sister of the other Picard modular groups. Let $\Gamma^{(d)}_s$ be the sister of Euclidean Picard modular groups  $\mathbf{ PU}(2,1;\mathcal{O}_d)$. We claim that the same feature shared by Euclidean Picard modular groups is that the quotient  $\mathbf{H}^2_{\mathbb{C}}/\Gamma^{(d)}_s$ has only one cusp. This is achieved by showing that  
$\Gamma^{(d)}$ and $\Gamma^{(d)}_s$ has a subgroup with the same finite index.
 
  We will start with finding suitable generators of the stabilizer of infinity of $\Gamma^{(d)}_s$ and then construct  a fundamental domain for the stabilizer acting on the
 boundary of complex hyperbolic space. We also obtain a presentation of the isotropy subgroup fixing infinity by analyzing  the combinatorics of the fundamental domain.
  Finally, we will determine some isometric spheres such that the intersection of these isometric spheres and the fundamental domain for the stabilizer has one cusp.
 
 Our  main results are the  following three theorems.

\begin{theorem} The group $\Gamma^{(2)}_s$ is generated by the elements
$$I^{(2)}_0=\left[\begin{array}{ccc}0&0&i/\sqrt{2}\\0&1&0\\i\sqrt{2}&0&0\end{array}\right], R^{(2)}_1=\left[\begin{array}{ccc}1&0&0\\0&-1&0\\0&0&1\end{array}\right],
 R^{(2)}_2=\left[\begin{array}{ccc}1&2&-2\\0&-1&2\\0&0&1\end{array}\right],$$
$$ R^{(2)}_3=\left[\begin{array}{ccc}1&-i\sqrt{2}&-1\\0&-1&i\sqrt{2}\\0&0&1\end{array}\right]
\quad\text{and}\quad T^{(2)}=\left[\begin{array}{ccc}1&0&i/\sqrt{2}\\0&1&0\\0&0&1\end{array}\right]
.$$
\end{theorem}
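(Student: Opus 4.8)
The plan is to follow the isometric-sphere (Ford domain) method used in \cite{fp, z1, z2}, exploiting the fact that four of the five proposed generators fix the point at infinity. Write $\Gamma' = \langle I^{(2)}_0, R^{(2)}_1, R^{(2)}_2, R^{(2)}_3, T^{(2)}\rangle$; the inclusion $\Gamma'\subseteq \Gamma^{(2)}_s$ is immediate once one checks that each matrix preserves the relevant Hermitian form and has entries in the sister lattice, so the content is the reverse inclusion $\Gamma^{(2)}_s\subseteq\Gamma'$. I would work in the Siegel/Heisenberg model in which the Hermitian form is the anti-diagonal one, so that $\infty$ and the origin $o$ are the two distinguished boundary points: then $R^{(2)}_1, R^{(2)}_2, R^{(2)}_3, T^{(2)}$ are upper triangular and hence lie in the stabilizer $\Gamma^{(2)}_{s,\infty}$ of $\infty$, while $I^{(2)}_0$ is the inversion interchanging $o$ and $\infty$.

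The first step is to determine $\Gamma^{(2)}_{s,\infty}$ and to show that it equals $\langle R^{(2)}_1, R^{(2)}_2, R^{(2)}_3, T^{(2)}\rangle$. Reading off the Heisenberg action, $T^{(2)}$ is the pure vertical translation generating the center, $R^{(2)}_1$ is an order-two rotation, and suitable products of $R^{(2)}_1, R^{(2)}_2, R^{(2)}_3$ produce the full lattice of horizontal Heisenberg translations. I would verify by direct computation that these elements exhaust the integral upper-triangular stabilizer, then build an explicit fundamental domain $D_\infty$ for $\Gamma^{(2)}_{s,\infty}$ acting on $\mathfrak{N}=\partial\mathbf{H}^2_{\mathbb{C}}\setminus\{\infty\}$ as a prism over a fundamental polygon in the $\zeta$-plane, and read off a presentation of $\Gamma^{(2)}_{s,\infty}$ from the side-pairing combinatorics of $D_\infty$.

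The second step is the geometric heart of the argument. I would consider the isometric sphere of $I^{(2)}_0$ together with all its $\Gamma^{(2)}_{s,\infty}$-translates, each a spinal sphere whose radius is fixed by the bottom-left entry $i\sqrt2$ and whose center runs over the $\Gamma^{(2)}_{s,\infty}$-orbit of $o$. The goal is to prove that the exterior of the union of these spheres, intersected with the vertical prism over $D_\infty$, is exactly the Ford domain of $\Gamma'$, having a single cusp at $\infty$; equivalently, that these isometric spheres \emph{cover} $D_\infty$ with no gaps. Granting this, the Poincar\'e polyhedron theorem, in the form used in \cite{fp, z1}, identifies the side-pairings (the stabilizer generators on the vertical sides and $I^{(2)}_0$ on the floor) as a complete generating set, which yields $\Gamma^{(2)}_s=\Gamma'$ and confirms the one-cusp claim of the introduction.

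The main obstacle is this covering verification: one must show that every point of $D_\infty$ lies on or inside some translated isometric sphere, which reduces to a worst-case height estimate over the finitely many deepest points of $D_\infty$ (the vertices and edge-midpoints of the fundamental prism). This is precisely where the arithmetic of $\mathcal{O}_2=\mathbb{Z}[i\sqrt2]$ and the exact shape of the sister lattice enter, and where any gap would force additional generators; checking both that the candidate spheres suffice and that no further isometric sphere protrudes above them inside $D_\infty$ is the delicate computation that the chosen five generators are engineered to satisfy.
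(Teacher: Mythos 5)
Your outline follows the same path as the paper's proof (stabilizer analysis and a fundamental prism, covering of the prism by isometric spheres, then a one-cusp/index argument), but the covering family you commit to is provably too small, and this is precisely where the argument breaks. You take only the isometric sphere of $I^{(2)}_0$ and its $(\Gamma^{(2)}_s)_\infty$-translates: these are all Cygan spheres of the same radius $\sqrt{2/\sqrt{2}}=2^{1/4}$ whose centers project vertically onto the orbit of $0$ under $\Delta^{(2)}$, namely the lattice $\Lambda=2\mathbb{Z}+i\sqrt{2}\,\mathbb{Z}$ (note the coarseness: for the sister group, $\Delta^{(2)}$ has index $2$ in $Isom(\mathcal{O}_2)$, so the translation lattice of the stabilizer misses the point $1$). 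A point $(\zeta,t)$ can lie inside a Cygan sphere of radius $r$ centered over $\lambda$ only if $|\zeta-\lambda|<r$, since the Cygan expression dominates $|\zeta-\lambda|^2$. But the prism vertices $v_2^{\pm}=(1+i\sqrt{2}/2,\pm\sqrt{2}/2)$ --- in fact all six vertices of $\Sigma_2$ --- project to points at Euclidean distance $\sqrt{3/2}\approx 1.2247$ from $\Lambda$, while $2^{1/4}\approx 1.1892<\sqrt{3/2}$. Hence the vertices lie outside \emph{every} sphere in your family; the covering claim is false, and no ``worst-case height estimate'' over deepest points can rescue it, because the failure is unconditional, not delicate.

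This is exactly why the paper's Section 5 enlarges the family: besides $\mathcal{S}^{(2)}_0$ it uses the isometric sphere $\mathcal{S}^{(2)}_1$ of the element $I^{(2)}_0R^{(2)}_3I^{(2)}_0\in\Gamma'$, a Cygan sphere of \emph{different} radius $1$ centered at $(1,0)$ --- a center not in the $(\Gamma^{(2)}_s)_\infty$-orbit of $o$ --- together with the translates $T^{(2)}(\mathcal{S}^{(2)}_1)$, ${T^{(2)}}^{-1}(\mathcal{S}^{(2)}_1)$ and ${T^{(2)}}^{-1}R^{(2)}_1(\mathcal{S}^{(2)}_1)$; the covering (Proposition 5.1) is then verified by decomposing $\Sigma_2$ into five polyhedra, each contained in one of these five convex spinal spheres. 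So a correct proof must bring in isometric spheres of elements of $\Gamma'=\langle I^{(2)}_0,R^{(2)}_1,R^{(2)}_2,R^{(2)}_3,T^{(2)}\rangle$ that are not stabilizer-translates of the sphere of $I^{(2)}_0$, and finding them is the real content of the theorem. A secondary issue: the Poincar\'e polyhedron theorem applied to the Ford domain of $\Gamma'$ only yields discreteness and a presentation of $\Gamma'$; to conclude $\Gamma'=\Gamma^{(2)}_s$ one still needs a cusp/index comparison, and for this the paper first proves (Section 3, via the common index-three subgroup $H^{(2)}$) that $\Gamma^{(2)}_s$ has exactly one cusp, so that the orbifold covering $\mathbf{H}^2_{\mathbb{C}}/\Gamma'\to\mathbf{H}^2_{\mathbb{C}}/\Gamma^{(2)}_s$ has degree $[(\Gamma^{(2)}_s)_\infty:\Gamma'_\infty]=1$. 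Your proposal leaves this bridging step implicit.
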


\medskip

\begin{theorem}  The group $\Gamma^{(7)}_s$ is generated by the elements
$$I^{(7)}_0=\left[\begin{array}{ccc}0&0&i/\sqrt{7}\\0&1&0\\i\sqrt{7}&0&0\end{array}\right], R^{(7)}_1=\left[\begin{array}{ccc}1&0&0\\0&-1&0\\0&0&1\end{array}\right],
 R^{(7)}_2=\left[\begin{array}{ccc}1&1& i\omega_7/\sqrt{7}\\0&-1&1\\0&0&1\end{array}\right],$$
 $$
 R^{(7)}_3=\left[\begin{array}{ccc}1&\overline{\omega_7}&-1\\0&1&\omega_7\\0&0&1\end{array}\right]\quad\text{and}\quad T^{(7)}=\left[\begin{array}{ccc}1&0&i/\sqrt{7}\\0&1&0\\0&0&1\end{array}\right]
.$$
\end{theorem}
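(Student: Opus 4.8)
The plan is to follow exactly the programme announced in the introduction, proving generation in two stages: first determine the stabilizer $\Gamma_\infty$ of the point at infinity inside $\Gamma^{(7)}_s$, then upgrade a generating set of $\Gamma_\infty$ to one for the whole lattice by adjoining the single involution $I^{(7)}_0$ and running a Ford-domain argument. I work throughout with the second Hermitian form, for which $\infty=[1:0:0]$, the origin is $o=[0:0:1]$, and $\partial\mathbf{H}^2_{\mathbb{C}}\setminus\{\infty\}$ is identified with the Heisenberg group $\mathfrak{N}$ carrying the Cygan metric. In these coordinates the four matrices $R^{(7)}_1,R^{(7)}_2,R^{(7)}_3,T^{(7)}$ are upper triangular, hence fix $\infty$, whereas $I^{(7)}_0$ interchanges $o$ and $\infty$; one checks directly that $(I^{(7)}_0)^2=R^{(7)}_1$ in $\mathbf{PU}(2,1)$, so $I^{(7)}_0$ has order four and $R^{(7)}_1\in\langle I^{(7)}_0\rangle$. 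Writing $\Gamma'=\langle R^{(7)}_1,R^{(7)}_2,R^{(7)}_3,T^{(7)},I^{(7)}_0\rangle$, the goal is $\Gamma'=\Gamma^{(7)}_s$.

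The first step is to show $\Gamma_\infty=\langle R^{(7)}_1,R^{(7)}_2,R^{(7)}_3,T^{(7)}\rangle$. Any $g\in\Gamma^{(7)}_s$ fixing $\infty$ is upper triangular and acts on $\mathfrak{N}$ as a Heisenberg similarity, that is, a Heisenberg translation followed by a rotation about the vertical axis. Reading the integrality conditions imposed by $\mathcal{O}_7=\mathbb{Z}[\omega_7]$ off the matrix entries, one sees that the vertical (central) translations are generated by $T^{(7)}$, the horizontal translation lattice is generated over $\mathcal{O}_7$ by the unipotent $R^{(7)}_3$, and the finite rotational part is generated by $R^{(7)}_1$ and $R^{(7)}_2$. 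Constructing the fundamental domain for $\Gamma_\infty$ acting on $\mathfrak{N}$ (a prism over the Dirichlet polygon of the translation lattice) both exhibits these four elements as a generating set and yields the presentation of $\Gamma_\infty$ from the combinatorics of the domain, confirming that no further elements fixing $\infty$ occur.

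The second step is the isometric-sphere reduction giving $\Gamma^{(7)}_s=\langle\Gamma_\infty,I^{(7)}_0\rangle$. For $g\in\Gamma^{(7)}_s$ with lower-left entry $c_{31}(g)\ne 0$ (equivalently $g(\infty)\ne\infty$), the isometric sphere $S_g$ is the Cygan sphere of radius $r_g=\sqrt{2/|c_{31}(g)|}$ centered at $g^{-1}(\infty)$. The explicit description of the sister group constrains $c_{31}(g)$ to be a nonzero multiple of $i\sqrt{7}$, whence $|c_{31}(g)|\ge\sqrt{7}$ with equality realized by $I^{(7)}_0$; thus $I^{(7)}_0$ and its $\Gamma_\infty$-translates carry the \emph{largest} isometric spheres, and $\{|c_{31}(g)|\}$ is a discrete set bounded below by $\sqrt{7}$ on the non-stabilizing elements. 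The decisive claim is that the $\Gamma_\infty$-translates of $S_{I^{(7)}_0}$ cover all of $\mathfrak{N}$, equivalently that finitely many such spheres already cover the $\Gamma_\infty$-fundamental domain built in the first step; this is precisely the statement that the associated Ford domain has no finite vertex and meets the ideal boundary only in the single cusp at $\infty$. Granting the covering, the standard descent finishes: choosing $g\in\Gamma^{(7)}_s\setminus\Gamma'$ with $|c_{31}(g)|$ minimal, the center $g^{-1}(\infty)$ of $S_g$ lies in the interior of some translate $\gamma S_{I^{(7)}_0}$ with $\gamma\in\Gamma'$, and the radius formula for a composite isometric sphere then forces $|c_{31}(\gamma g)|<|c_{31}(g)|$; since $\sqrt{7}$ is the minimal nonzero value, one application drops $c_{31}$ to $0$, so $\gamma g\in\Gamma_\infty\subset\Gamma'$ and hence $g\in\Gamma'$, a contradiction. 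Equivalently, Poincar\'e's polyhedron theorem applied to the Ford domain returns $\langle\Gamma_\infty,I^{(7)}_0\rangle$, the only side-pairing beyond the vertical walls of $\Gamma_\infty$ being $I^{(7)}_0$ (consistent with $I^{(7)}_0$ pairing $S_{I^{(7)}_0}$ to a $\Gamma_\infty$-translate of itself, since $(I^{(7)}_0)^2\in\Gamma_\infty$).

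The main obstacle is the covering claim: one must verify by an explicit finite computation in the Cygan metric that the $\Gamma_\infty$-orbit of $S_{I^{(7)}_0}$ leaves no uncovered point of the Heisenberg fundamental domain. This reduces to a bounded system of inequalities comparing spinal-sphere radii with the side lengths of that domain, but it is delicate here because $\sqrt{7}$ is comparatively large, so the maximal spheres are comparatively small; one should expect to list the finitely many short-entry elements (namely the $\Gamma_\infty$-conjugates of $I^{(7)}_0$ whose spheres touch the domain), confirm each lies in $\Gamma'$, and check the covering inequalities for this explicit finite family. Once the covering and the matching of the side-pairings against $R^{(7)}_1,R^{(7)}_2,R^{(7)}_3,T^{(7)},I^{(7)}_0$ are in place, Poincar\'e's theorem simultaneously completes the generation statement and delivers the single-cusp conclusion advertised in the introduction.
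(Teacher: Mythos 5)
Your skeleton is the same as the paper's (Sections 4--5, following the Falbel--Parker scheme): find generators and a prism fundamental domain $\Sigma_7$ for $(\Gamma^{(7)}_s)_\infty$, adjoin $I^{(7)}_0$, and descend on $|z_{31}|$ using the fact that a point inside the isometric sphere of $h$ yields $|z_{31}(hg)|<|z_{31}(g)|$; your side remarks are also sound (e.g. $(I^{(7)}_0)^2=R^{(7)}_1$ in $\mathbf{PU}(2,1)$, and the value set of $|z_{31}|$ on $\Gamma^{(7)}_s$ is discrete with minimum $\sqrt{7}$). The genuine gap is that the proposal stops exactly where the content of the theorem lies: the covering claim. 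You assert that the $(\Gamma^{(7)}_s)_\infty$-translates of the spinal sphere $\mathcal{S}_0^{(7)}$ of $I^{(7)}_0$ cover the prism, you yourself call this ``the decisive claim'' and ``the main obstacle'', and you defer it to ``an explicit finite computation'' that is never performed. Everything else (stabilizer step, strict decrease, termination by discreteness) is standard scaffolding; the covering verification is the one nontrivial step, and it is exactly what the paper's Proposition 5.2 supplies by decomposing $\Sigma_7$ into eight explicit polyhedra, each checked to lie inside one of six explicit spinal spheres. Note moreover that the paper's covering family is \emph{not} the one you propose: besides $\mathcal{S}_0^{(7)}$ and the translates $\mathcal{S}_2^{(7)},\ {T^{(7)}}^{2}(\mathcal{S}_2^{(7)}),\ {T^{(7)}}^{3}(\mathcal{S}_2^{(7)})$ of $\mathcal{S}_0^{(7)}$ (here $\mathcal{S}_2^{(7)}$ is the sphere of $(R^{(7)}_2)^{-1}I^{(7)}_0R^{(7)}_2$), it uses the spheres $\mathcal{S}_1^{(7)}$ and $T^{(7)}R^{(7)}_2(\mathcal{S}_1^{(7)})$ of $Q_1=I^{(7)}_0R^{(7)}_3I^{(7)}_0$, whose radius satisfies $r^2=2/7$ rather than $2/\sqrt{7}$ and whose centers project to the half-lattice point $1/2\notin\mathcal{O}_7$; these are not translates of $\mathcal{S}_0^{(7)}$, so your claim is strictly stronger than what the paper establishes.

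As it happens, your stronger claim is true for $d=7$, and it admits a short proof you could have given instead of deferring: the translates in question are precisely the Cygan spheres with $r^4=4/7$ centered at $(\mu,t_c)$, where $\mu$ ranges over $\mathcal{O}_7$ and, for each $\mu$, $t_c$ ranges over a coset of $(2/\sqrt{7})\mathbb{Z}$ (because $(\Gamma^{(7)}_s)_\infty$ contains a Heisenberg translation over every $\mu\in\mathcal{O}_7$ and the vertical translation $T^{(7)}$). The covering radius of the lattice $\mathcal{O}_7\subset\mathbb{C}$ equals the circumradius $2/\sqrt{7}$ of the acute triangle $\{0,1,\omega_7\}$, so any $(z,t)\in\mathfrak{R}$ has some $\mu$ with $|z-\mu|^4\le 16/49$, and then some admissible $t_c$ with $\bigl(t-t_c+2\Im(z\bar{\mu})\bigr)^2\le 1/7$; since $16/49+7/49=23/49<4/7$, the point lies in the open interior of that translate, and the descent then closes the proof without even needing the one-cusp input of Proposition 3.3. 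Two cautions, though. First, this is a numerical coincidence special to $d=7$: for $d=2$ the analogous claim is \emph{false} (the prism corner over $1+i\sqrt{2}/2$ sits at horizontal distance $\sqrt{3/2}$ from every admissible center, exceeding the horizontal radius $2^{1/4}$), which is precisely why the paper adjoins the smaller spheres of $I_0R_3I_0$; so the heuristic ``the largest spheres should suffice'' cannot be taken on faith and must be checked. Second, your structural description of the stabilizer is off in one detail: $R^{(7)}_3$ is not a unipotent translation but a rotation by $\pi$ about $\omega_7/2$ (the matrix printed in the theorem, with central entry $+1$, is not even in $\mathbf{U}(2,1)$; the correct form, used in Proposition 4.4, has central entry $-1$), and the horizontal translation lattice arises from products such as $R^{(7)}_3R^{(7)}_1$.
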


\medskip

\begin{theorem}  The group $\Gamma^{(11)}_s$ is generated by the elements
$$I^{(11)}_0=\left[\begin{array}{ccc}0&0&i/\sqrt{11}\\0&1&0\\i\sqrt{11}&0&0\end{array}\right], R^{(11)}_1=\left[\begin{array}{ccc}1&0&0\\0&-1&0\\0&0&1\end{array}\right],
 R^{(11)}_2=\left[\begin{array}{ccc}1&1&i\omega_{11}/\sqrt{11}\\0&-1&1\\0&0&1\end{array}\right],$$
 $$R^{(11)}_3=\left[\begin{array}{ccc}1&\overline{\omega_{11}}&-3/2+i/2\sqrt{11}\\0&-1&\omega_{11}\\0&0&1\end{array}\right]\quad\text{and}\quad T^{(11)}=\left[\begin{array}{ccc}1&0&i/\sqrt{11}\\0&1&0\\0&0&1\end{array}\right]
.$$
\end{theorem}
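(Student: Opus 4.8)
The plan is to let $\Gamma^{(11)}_s$ act on complex hyperbolic space $\mathbf{H}^2_{\mathbb{C}}$ through the second Hermitian form $\langle z,w\rangle=z_1\overline{w_3}+z_2\overline{w_2}+z_3\overline{w_1}$, for which all five listed matrices are readily verified to be unitary, so that $\infty=[1:0:0]^{t}$, $0=[0:0:1]^{t}$, and the punctured boundary $\partial\mathbf{H}^2_{\mathbb{C}}\setminus\{\infty\}$ is identified with the Heisenberg group $\mathfrak{N}$. Write $\Gamma=\Gamma^{(11)}_s$, let $\Gamma_\infty$ be its stabiliser of $\infty$, and set $\Gamma'=\langle I^{(11)}_0,R^{(11)}_1,R^{(11)}_2,R^{(11)}_3,T^{(11)}\rangle$. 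Since $I^{(11)}_0$ interchanges $0$ and $\infty$ while the other four generators fix $\infty$, the whole argument reduces to two assertions: first, that $R^{(11)}_1,R^{(11)}_2,R^{(11)}_3,T^{(11)}$ generate the cusp group $\Gamma_\infty$; and second, a covering statement from which $\Gamma'=\Gamma$ follows by a height-reduction induction.

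For the first assertion I would identify $\Gamma_\infty$ with the matrices of $\Gamma$ whose lower-left $2\times1$ block vanishes, a semidirect product of a Heisenberg translation lattice with a finite group of admissible rotations and reflections. Here $T^{(11)}$ realises the vertical (central) translation by $i/\sqrt{11}$, the matrices $R^{(11)}_2,R^{(11)}_3$ provide the screw motions translating by a $\mathbb{Z}$-basis of $\mathcal{O}_{11}=\mathbb{Z}[\omega_{11}]$, and $R^{(11)}_1$ supplies the order-two rotation. One then constructs a bounded fundamental polyhedron $D_\infty$ for $\Gamma_\infty$ in $\mathfrak{N}$, a Heisenberg prism over a fundamental region for the horizontal part, and reads off the combinatorics of its side-pairings; this simultaneously yields the presentation of $\Gamma_\infty$ promised in the introduction and confirms that these four elements generate it.

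The second, and crucial, step is the covering lemma. Each $g\in\Gamma$ with $g(\infty)\neq\infty$ has nonzero lower-left entry $g_{31}$ lying in a discrete subset of $\mathbb{C}$ bounded away from $0$, and its isometric sphere in the Cygan metric has radius $\sqrt{2/|g_{31}|}$ centred at $g^{-1}(\infty)$; the element $I^{(11)}_0$ and its $\Gamma_\infty$-translates attain the minimal modulus $|g_{31}|=\sqrt{11}$, hence the spheres of maximal radius $r=\sqrt{2/\sqrt{11}}$, centred at the $\Gamma_\infty$-orbit of $0$. I would prove that these maximal spheres cover $D_\infty$. Granting this, take $g\in\Gamma\setminus\Gamma_\infty$; the point $u=g^{-1}(\infty)$ is finite, and after replacing $g$ by $gh^{-1}$ for a suitable $h\in\Gamma_\infty$ (which multiplies the lower-left entry by a number of modulus one, leaving $|g_{31}|$ unchanged) we may assume $u\in D_\infty$. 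By the covering lemma $u$ lies inside the isometric sphere of some $s=\gamma I^{(11)}_0$ with $\gamma\in\Gamma_\infty$, and this interiority is equivalent to the strict inequality $|(sg)_{31}|<|g_{31}|$. Since the nonzero values of $|g_{31}|$ form a discrete set bounded below by $\sqrt{11}$, a strictly decreasing sequence of them is finite, so the reduction terminates; by the covering lemma it can only terminate at an element of $\Gamma_\infty\subseteq\Gamma'$, whence $g\in\Gamma'$. As $\Gamma'\subseteq\Gamma$ is immediate, this gives $\Gamma'=\Gamma$.

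The main obstacle is the covering lemma itself. Because $\mathcal{O}_{11}$ is the sparsest of the three rings treated in this paper, the prism $D_\infty$ is correspondingly large while the maximal isometric spheres are comparatively small, so the covering is tight: the delicate point is to check that the points of $D_\infty$ lying deepest in the Cygan metric from the $\Gamma_\infty$-orbit of $0$, namely the corners of the prism and the centres of its vertical faces, still lie within distance $r$ of some orbit point. I expect this to require a careful analysis of the finitely many nearest orbit points and, should the maximal spheres not quite suffice, the inclusion of the next-largest spheres (those coming from elements with $|g_{31}|$ the second-smallest modulus) to complete the cover. All remaining steps are routine matrix computations once this geometric estimate is established.
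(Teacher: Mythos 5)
Your skeleton coincides with the paper's: first show that $R^{(11)}_1,R^{(11)}_2,R^{(11)}_3,T^{(11)}$ generate $(\Gamma^{(11)}_s)_\infty$ via a prism fundamental domain in the Heisenberg group with side-pairings (this is the paper's Section 4), then adjoin $I^{(11)}_0$ and deduce generation from a covering of the prism by isometric spheres of elements of the candidate subgroup. Your explicit descent on $|g_{31}|$ is a legitimate, essentially equivalent substitute for the paper's route (which establishes the one-cusp property by commensurability and then cites Theorem 3.5 of Falbel--Parker); the termination-by-discreteness step is correct, and the mix-up between $g(\infty)$ and $g^{-1}(\infty)$ in the inequality $|(sg)_{31}|<|g_{31}|$ is harmless.

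The genuine gap is the covering lemma itself, which is the entire technical content of the theorem. Your primary claim --- that the maximal spheres, i.e.\ the isometric spheres of elements with $|g_{31}|=\sqrt{11}$, which have Cygan radius $r$ with $r^2=2/\sqrt{11}$ and are centred over the lattice $\mathcal{O}_{11}$ --- cover the prism is provably false. Any point inside a Cygan sphere of radius $r$ centred at $(z_0,t_0)$ satisfies $|z-z_0|<r=\sqrt{2}/11^{1/4}\approx 0.777$; but one checks (as in your remark on sphere centres, and it holds for \emph{every} element of $\Gamma^{(11)}_s$ with $|g_{31}|=\sqrt{11}$, since $\overline{g_{32}}/\overline{g_{31}}\in\mathcal{O}_{11}$ for such elements) that all these spheres project over $\mathcal{O}_{11}$, while the prism vertex $v_2^{\pm}=\bigl(3/4+i\sqrt{11}/4,\pm\sqrt{11}/11\bigr)$ satisfies $|v_2-\tau|\geq\sqrt{3}/2\approx 0.866$ for all $\tau\in\mathcal{O}_{11}$ (minimum attained at $\tau=1$ and $\tau=\omega_{11}$), and likewise for all six vertices. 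So no selection of maximal spheres can work, and your fallback is not a contingency but mandatory: one must use spheres of elements with the second-smallest modulus $|g_{31}|=\sqrt{33}$, which is exactly what the paper does with $A_1=(I^{(11)}_0R^{(11)}_2I^{(11)}_0)^{-1}$ and $A_2=(R^{(11)}_2I^{(11)}_0)^2$, whose spheres have radius-squared $2/\sqrt{33}$ and are centred over non-lattice points such as $-\overline{\omega_{11}}/3$, together with nine images under stabiliser elements. Moreover, for $d=11$ even this is delicate: the paper must decompose the prism into nine polyhedral cells using some thirty-five auxiliary points, and two of the cells fit inside no single sphere but only inside star-convex unions $\mathcal{U}_1,\mathcal{U}_2$ of three spheres each. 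None of this quantitative work --- the actual proof --- is carried out in your proposal.
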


\section{Complex hyperbolic space}
\label{sec-complex}
\subsection{The Siegel domain}

\medskip

 A general reference for complex hyperbolic geometry is \cite{g}.  Let $\mathbb{C}^{2,1}$ denote 
the complex vector space of dimension  $3$, equipped with a non-degenerate
Hermitian form of signature $(2,1)$.There are several such forms. If we use the  second Hermitian form on $\mathbb{C}^{2,1}$, for column vectors 
$\mathbf{ z}=(z_1,z_2,z_3)^t$ and $\mathbf{ w}=(w_1,w_2,w_3)^t$,
$$\langle\mathbf{  z},\mathbf{ w}\rangle=\mathbf{ w}^*J\mathbf{ z}=z_1\overline{w_3}+z_2\overline{w_2}+z_3\overline{w_1},$$ where 
\begin{equation*}
J=\left(\begin{array}{ccc}
0 & 0& 1\\
0 & 1& 0\\
1 & 0& 0
\end{array}\right)
\end{equation*}
and $\mathbf{ w}^*$ is the Hermitian transpose of $\mathbf{ w}$.

The projective model of complex hyperbolic space $\mathbf{H}_{\mathbb{C}}^2$ is defined to be the collection of negative
lines in $\mathbb{C}^{2,1}$, namely, those points $\mathbf{z}$ satisfying $\langle \mathbf{z},\mathbf{z}\rangle<0$.

We mainly take the {\it Siegel domain} $\mathfrak{S}$ as a upper half-space model for the complex hyperbolic space, that is given by
$$\mathfrak{S}=\{(z_1,z_2)\in\mathbb{C}^2:2\Re ez_1+|z_2|^2<0\}.$$ The boundary of the {\it Siegel domain} $\mathfrak{S}$ is identified with the one-point compactification of the Heisenberg group. The Heisenberg group $\mathfrak{R}$ is $\mathbb{C}\times \mathbb{R}$ with the group law
\begin{equation*}(\zeta_1,t_1)\diamond(\zeta_2,t_2)=(\zeta_1+\zeta_2,t_1+t_2+2\Im m(\zeta_1\bar{\zeta}_2)).\end{equation*}
There is a canonical projection from $\mathfrak{R}$ to $\mathbb{C}$ called {\it vertical projection} and denoted by $\Pi$, given by $\Pi: (\zeta,t)\longmapsto \zeta.$

The Cygan metric on $\mathfrak{R}$ is given by
$$
\rho_0\left((\zeta_1,t_1),(\zeta_2,t_2)\right)=\left||\zeta_1-\zeta_2|^2-it_1+it_2-2i\Im m(\zeta_1\bar{\zeta}_2)\right|.
$$

We can extend the Cygan metric to an incomplete metric on
$\bar{\mathfrak{S}}-\{\infty\}$ as follows
$$\tilde{\rho}_0((\zeta_1,t_1,u_1),(\zeta_2,t_2,u_2))=\left||\zeta_1-\zeta_2|^2+|u_1-u_2|-it_1+it_2-2i\Im m(\zeta_1\bar{\zeta}_2)\right|.$$

Any point  $\mathbf{ p}\in\mathfrak{S}$  admits a unique lift to $\mathbb{C}^{2,1}$ of the following form, called its standard lift
\begin{equation}\label{eq:2-1}\mathbf{ p}=\left[\begin{array}{c}(-|\zeta|^2-u+it)/2\\ \zeta\\1\end{array}\right]\end{equation}
with $(\zeta,t,u)\in \mathbb{C}\times \mathbb{R}\times]0,\infty[.$   In particular,
\begin{equation*}q_{\infty}=\left[\begin{array}{c}1\\0\\0\end{array}\right].\end{equation*}
Then the triple $(\zeta,t,u)$ is called the horospherical coordinates of $\mathbf{p}$  and $\mathfrak{S}=\mathfrak{R}\times\mathbb{R}_+$ and $\partial\mathfrak{S}=(\mathfrak{R}\times\{0\})\cup\{q_{\infty}\}$.
\medskip

\subsection{Complex hyperbolic isometries}

\medskip

Let $\mathbf{U}(2,1)$ be the group of matrices that are unitary with respect to the form $\langle .,.\rangle$. The group of holomorphic isometries of complex hyperbolic space is the projective unitary group
$\mathbf{PU}(2,1)=\mathbf{U}(2,1)/\mathbf{U}(1)$, with a natural identification $\mathbf{U}(1)=\{e^{i\theta}I,\theta\in[0,2\pi)\}.$ We now describe the action of the stabilizer of $q_\infty$ on the Heisenberg group.

The Heisenberg group acts on itself by {\it Heisenberg translations}. For $(\tau,v)\in\mathfrak{R}$, this is
\begin{equation*}T_{(\tau,v)}:(z,t)\mapsto(z+\tau,t+v+2\Im m(\tau\bar{z}))=(\tau,v)\diamond(z,t).\end{equation*}
Heisenberg translation by $(0,v)$ for any $v\in\mathbb{R}$ is called {\it vertical translation} by $v$.

The unitary group $\mathbf{U}(1)$ acts on the Heisenberg group by {\it Heisenberg rotations}. For $e^{i\theta}\in \mathbf{ U}(1)$, the rotation fixing $q_0=(0,0,0)$ is given by \begin{equation*}R_{\theta}:(z,t)\mapsto(e^{i\theta}z,t).\end{equation*} 
For $r\in\mathbb{R}_+$, {\it Heisenberg dilation} by $r$ fixing $q_\infty$ and $q_0=(0,0,0)\in\partial\mathbf{H}^2_{\mathbb{C}}$ is given by
\begin{equation*}D_r:(z,t)\mapsto(r z,r^2 t).\end{equation*}

The stabilizer of $q_{\infty}$ in $\mathbf{PU}(2,1)$ is generated by all Heisenberg translations, rotations and dilations. 
The matrices of the 3 kinds of isometries are
$$T_{(z,t)}=\left[\begin{array}{ccc}
1&-\overline{z}&-(|z|^2-it)/2\\
0&1&z\\
0&0&1
\end{array}\right], R_{\theta}=\left[\begin{array}{ccc}
1&0&0\\
0&e^{i\theta}&0\\
0&0&1
\end{array}\right], D_{r}=\left[\begin{array}{ccc}
r&0&0\\
0&1&0\\
0&0&1/r
\end{array}\right].$$

Note that Heisenberg translations and rotations preserve each horosphere  based at $q_{\infty}$, whereas Hersenberg dilations  permute horosphere based at $q_{\infty}$.
For this reason the group generated by all Heisenberg translations and rotations, which is the semidirect product $\mathbf{ U}(1)\ltimes\mathfrak{R}$, is called the {\it Heisenberg isometry group} $Isom(\mathfrak{R})$.
In addition, the group $Isom(\mathfrak{R})$ consists exactly of those matrices of the following form:
$$\left[\begin{array}{ccc}
1&-\overline{z}e^{i\theta}&-(|z|^2-it)/2\\
0&e^{i\theta}&z\\
0&0&1
\end{array}\right].$$

Recall from \cite{fp} that the  the exact sequence
\begin{equation*}0\longrightarrow\mathbb{R}\longrightarrow\mathfrak{R}\stackrel{\Pi}{\longrightarrow}\mathbb{C}\longrightarrow0,\end{equation*}
induces the exact sequence 
\begin{equation}\label{eq:2-2}0\longrightarrow\mathbb{R}\longrightarrow  Isom(\mathfrak{R})\stackrel{\Pi_{*}}{\longrightarrow}  Isom(\mathbb{C})\longrightarrow1.\end{equation}

Explicitly, $\Pi_{*}\left(Isom(\mathfrak{R})\right)=\left[ \begin{array}{cc}
 e^{i\theta}&\zeta_0\\0&1
\end{array}\right]$, acting on $\mathbb{C}$ by $w\rightarrow e^{i\theta}w+z$ and $Ker(\Pi_{*})$ consist of Hersenberg vertical translation.
\medskip

\subsection{Isometric spheres}

\medskip

Given an element $G\in \mathbf{PU}(2,1)$ satisfying $G(q_{\infty})\neq
q_{\infty}$, we define the isometric sphere of $G$ to be the
hypersurface
$$\left\{\mathbf{z}\in\mathbf{H}^2_{\mathbb{C}}:|\langle \mathbf{z},q_{\infty}\rangle|=|\langle \mathbf{z},G^{-1}(q_{\infty})\rangle|\right\}.$$

We often consider  spheres with respect to the Cygan metric. The 
Cygan sphere of radius $r\in  \mathbb{R}_{+}$ and centre $(z_0,t_0,0)\in \mathfrak{R}$ is given
by
$$\left\{(z,t,u)\in \mathbf{H}^2_{\mathbb{C}} :\left||z-z_0|^2+u+it-it_0+2i\Im m(z\bar{z}_0)\right|=r^2\right\}.$$
Note that Cygan spheres are always convex.

If $G\in \mathbf{PU}(2,1)$ has the matrix form \begin{equation}\label{eq:2-6}
\left[\begin{array}{ccc}
z_{11}&z_{12}&z_{13}\\
z_{21}&z_{22}&z_{23}\\
z_{31}&z_{32}&z_{33}
\end{array}\right]
,\end{equation} then $G(q_{\infty})\neq q_{\infty}$ if and
only if $z_{31}\neq0$. The isometric sphere of $G$ is a  Cygan sphere of radius 
$r=\sqrt{2/|z_{31}|}$ and centre $G^{-1}(q_{\infty})$, which in
horospherical coordinates is
$$(z_0,t_0,0)=(\bar{z_{32}}/\bar{z_{31}},2\Im m(\bar{z_{33}}/\bar{z_{31}}),0).$$

\section{the sister groups of Picard modular groups}

Let $\mathcal{O}_d$ be the ring of integers in the quadratic imaginary number field $\mathbb{Q}(i\sqrt{d})$, where $d$ is
a positive square-free integer. If $d\equiv1,2\pmod 4$, then $\mathcal{O}_d=\mathbb{Z}[i\sqrt{d}]$ and if
 $d\equiv3 \pmod 4$, then $\mathcal{O}_d=\mathbb{Z}[\omega_d]$, where $\omega_d=(1+i\sqrt{d})/2   $. The group $\Gamma^{(d)}=\mathbf{PU}(2,1;\mathcal{O}_d)$
 is called {\it Euclidean Picard modular group} if the ring $\mathcal{O}_d$ is Euclidean, namely, the rings $\mathcal{O}_1,\mathcal{O}_2,\mathcal{O}_3,\mathcal{O}_7,\mathcal{O}_{11}$.

The sister of the Eisenstein-Picard modular group $\mathbf{PU}(2,1;\mathcal{O}_3)$ was  defined explicitly in \cite{jor}.
In fact, we find that the similar definition of sister groups can be extended to the other Picard modular groups. In this paper, we will only consider the case $d=2,7,11.$
 Let $\Gamma_s^{(d)}$ be  the collection of all elements of $\mathbf{PU}(2,1)$ that, when written in the form (3), have $z_{11},z_{12},z_{13}(i\sqrt{d}), z_{21}/(i\sqrt{d}), z_{22}, z_{23}, z_{31}/(i\sqrt{d})$,
 and  $z_{33}$ all in $\mathcal{O}_d$. That is, in the case of $d=3 \pmod 4$
 
 \begin{enumerate}[(a)]
\item $z_{13}=x_{13}/2+iy_{13}/2\sqrt{d}$, where $x_{13}$ and $y_{13}$ are integers of the same parity; 
\item $z_{jk}=x_{jk}/2+i\sqrt{d}y_{jk}/2$ for all other $jk$, where $x_{jk}$ and $y_{jk}$ are integers of the same parity;
\item$x_{21}, x_{31},x_{32}$ are all divisible by $d$
\end{enumerate}

and  in the case of $d=2\pmod4$
 \begin{enumerate}[(a)]
\item $z_{13}=x_{13}+iy_{13}/\sqrt{2}$, where $x_{13}$ and $y_{13}$ are integers;
\item $x_{21}, x_{31},x_{32}$ are all divisible by $2$.
\end{enumerate}
It is simple to check that $\Gamma_s^{(d)}$ is a group. It is also discrete as $\mathcal{O}_d$ is discrete in $\mathbb{C}$. We will show that the sister groups $\Gamma_s^{(d)}$ 
defined above commensurable with the Picard modular group  $\Gamma^{(d)}$. Let $H^{(d)}=\Gamma_s^{(d)}\cap \Gamma^{(d)}$ be the intersection of $\Gamma_s^{(d)}$ and 
$\Gamma^{(d)}$. One will see that $H^{(d)}$ has the same index in $\Gamma_s^{(d)}$ and 
$\Gamma^{(d)}$. We will prove this for $d=2,7,11$ case by case.

For the case $d=2$, we have

\begin{lemma}$H^{(2)}$ has index three in $\Gamma_s^{(2)}$ and 
$\Gamma^{(2)}$.
\end{lemma}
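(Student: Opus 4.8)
The plan is to exhibit $H^{(2)}$ as the point-stabiliser of a transitive action of each of $\Gamma^{(2)}$ and $\Gamma_s^{(2)}$ on a set of three points, so that both indices equal $3$ by the orbit--stabiliser formula. The arithmetic input is that $2=-(i\sqrt2)^2$, so $\pi:=i\sqrt2$ generates the unique prime of $\mathcal O_2$ above $2$, $\mathcal O_2/(\pi)\cong\mathbb F_2$, and complex conjugation is trivial modulo $\pi$ since $(a+b\,i\sqrt2)-(a-b\,i\sqrt2)=2b\,i\sqrt2\in(\pi)$. Reduction modulo $\pi$ is thus a ring homomorphism, which I apply entrywise to matrices. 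First I would unwind the two membership conditions: an element of $H^{(2)}=\Gamma^{(2)}\cap\Gamma_s^{(2)}$ must have all $z_{jk}\in\mathcal O_2$ together with $z_{21},z_{31},z_{32}\in i\sqrt2\,\mathcal O_2$ and $z_{13}\in\mathcal O_2$; equivalently, it lies in $\Gamma^{(2)}$ and reduces to an upper triangular matrix mod $\pi$.

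For the index in $\Gamma^{(2)}$, since conjugation dies mod $\pi$, entrywise reduction sends a matrix $G$ with $G^*JG=J$ to $\overline G$ with $\overline G{}^{\,t}\,\overline J\,\overline G=\overline J$, i.e.\ into the finite orthogonal group $\mathrm O(\overline J,\mathbb F_2)$, where $\overline J$ is the reduction of $J$. I would check that $\mathrm O(\overline J,\mathbb F_2)$ has order $6$, fixes $e_2$, and acts faithfully as the full symmetric group on the three nonzero $\overline J$-isotropic vectors $e_1,e_3,e_1+e_3$. Under this action $H^{(2)}$ is exactly the preimage of the stabiliser of $[e_1]$ (note that $z_{32}\equiv 0$ is then automatic, because every element of $\mathrm O(\overline J,\mathbb F_2)$ fixes $e_2$). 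Transitivity of the $\Gamma^{(2)}$-action on the three lines follows from two explicit integral elements: $J$ sends $[e_1]$ to $[e_3]$, and the lower triangular element $JT_{(i\sqrt2,0)}J=\left[\begin{smallmatrix}1&0&0\\i\sqrt2&1&0\\-1&i\sqrt2&1\end{smallmatrix}\right]$ sends $[e_1]$ to $[e_1+e_3]$. Hence the orbit has size $3$ and $[\Gamma^{(2)}:H^{(2)}]=3$.

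For the index in $\Gamma_s^{(2)}$ the half-integral $(1,3)$-entry prevents a direct reduction, so I would first conjugate by $D=\mathrm{diag}(i\sqrt2,1,1)$. One computes that $D\Gamma_s^{(2)}D^{-1}$ has all entries in $\mathcal O_2$ and preserves the integral Hermitian form $\left[\begin{smallmatrix}0&0&i\sqrt2\\0&2&0\\-i\sqrt2&0&0\end{smallmatrix}\right]$, with the entries $w_{12},w_{32}$ now in $(\pi)$. Reducing mod $\pi$ therefore lands in the parabolic subgroup $P$ of matrices fixing the vector $e_2$ (their second column is $e_2$); $P$ acts on $\mathbb F_2^3/\langle e_2\rangle\cong\mathbb F_2^2$ through $\mathbf{GL}_2(\mathbb F_2)$, that is, on $\mathbb P^1(\mathbb F_2)$, a three-point set. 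Since the condition $z_{13}\in\mathcal O_2$ becomes $w_{13}\equiv 0\pmod\pi$, the conjugate $DH^{(2)}D^{-1}$ is precisely the stabiliser of the line spanned by the image of $e_3$. The conjugates of the displayed generators give transitivity: $DT^{(2)}D^{-1}$ reduces to a transvection carrying $[e_3]$ to $[e_1+e_3]$, while $DI_0^{(2)}D^{-1}$ reduces to the swap carrying $[e_3]$ to $[e_1]$. Thus the orbit of $[e_3]$ is all three lines and $[\Gamma_s^{(2)}:H^{(2)}]=3$.

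The step I expect to be the genuine obstacle is the $\Gamma_s^{(2)}$ side. A naive count is misleading: measuring only $z_{13}\bmod\mathcal O_2$ suggests an index of $2$, and the correct value $3$ emerges only after the single-entry condition is coupled to the induced linear action on $\mathbb P^1(\mathbb F_2)$ via the conjugation by $D$. Identifying the right conjugating matrix and the resulting integral form, and then confirming that the reduced action is transitive (for which the explicit elements above suffice), is where the real work lies; the $\Gamma^{(2)}$ side and the verification that $\mathrm O(\overline J,\mathbb F_2)\cong S_3$ fixes $e_2$ are comparatively routine.
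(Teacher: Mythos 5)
Your argument is correct, and it reaches the lemma by a genuinely different route than the paper. The paper argues by explicit coset enumeration: it writes down representatives $g_1,g_2$ for $\Gamma^{(2)}$ and $g_1'=I_0^{(2)}$, $g_2'=T^{(2)}$ for $\Gamma_s^{(2)}$, extracts from unitarity the parity congruences $x_{21}^2\equiv-x_{11}x_{31}$ and $x_{32}^2\equiv-x_{31}x_{33}\pmod 2$, and then checks case by case that every element falls into one of the three cosets. You package the same arithmetic input structurally: because $2$ ramifies and conjugation dies modulo $\pi=i\sqrt2$, entrywise reduction is a homomorphism sending $\Gamma^{(2)}$ into $\mathrm{O}(\overline{J},\mathbb{F}_2)\cong S_3$ (which fixes $e_2$ and permutes the three isotropic lines), and, after conjugating by $D=\mathrm{diag}(i\sqrt2,1,1)$ to clear the fractional $(1,3)$-entry, sending $\Gamma_s^{(2)}$ into the stabilizer of $e_2$ acting on $\mathbb{P}^1(\mathbb{F}_2)$; in each case $H^{(2)}$ is exactly the preimage of a point stabilizer, and the index is the orbit size. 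Notably, your transitivity witnesses are essentially the paper's coset representatives: $J$ and $JT_{(i\sqrt2,0)}J$ correspond to $g_1,g_2$, while $DI_0^{(2)}D^{-1}$ and $DT^{(2)}D^{-1}$ are the conjugates of $g_1'=I_0^{(2)}$ and $g_2'=T^{(2)}$. What the two approaches buy: yours gives the exact index $3$ in one stroke via orbit--stabilizer (the paper's case analysis as written only shows that three cosets cover $\Gamma^{(2)}$ and $\Gamma_s^{(2)}$, i.e.\ index at most $3$, distinctness of the cosets being left implicit), and it explains conceptually why the answer is $3=|\mathbb{P}^1(\mathbb{F}_2)|$; the paper's computation, in exchange, produces explicit representatives that are reused immediately afterwards, e.g.\ Proposition 3.3 deduces the one-cusp property from the decomposition $D\cup g_1(D)\cup g_2(D)$ together with $g_1(0)=g_2(0)=0$. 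One small point you should make explicit on the sister side: the reduction of $DGD^{-1}$ genuinely fixes $e_2$ rather than killing it, i.e.\ $w_{22}$ is not divisible by $\pi$; this follows since $\det(DGD^{-1})=\det G$ lies in $\mathcal{O}_2$ and has modulus one, hence is a unit, so the reduced matrix is invertible and its second column, a multiple of $e_2$, cannot vanish.
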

\begin{proof}
First, we note that $H^{(2)}$ consists of all matrices in $\mathbf{PU}(2,1)$, which written in the form $(3)$, have $z_{jk}=x_{jk}+i\sqrt{2}y_{jk}$,
where $x_{jk}$ and $y_{jk}$ are integers and also $x_{21},x_{31},$ and $x_{32}$ are all divisible by $2$.

We decompose $\Gamma_s^{(2)}$ and 
$\Gamma^{(2)}$  into three $H^{(2)}$-cosets as follows. We claim that

$$\Gamma^{(2)}=H^{(2)}\cup g_1 H^{(2)}\cup g_2 H^{(2)}$$

where
$$g_1=\left[\begin{array}{ccc}
0&0&1\\
0&-1&0\\
1&0&0
\end{array}\right],  g_{2}=\left[\begin{array}{ccc}
1&0&0\\
-i\sqrt{2}&-1&0\\
-1&i\sqrt{2}&1
\end{array}\right]. $$

Let $g\in \Gamma^{(2)}$ be written in the form $(3)$. As $g\in \mathbf{PU}(2,1)$, we have
\begin{equation}
\overline{z_{11}}z_{31}+z_{11}\overline{z_{31}}+|z_{21}|^2=0
\end{equation}
and
\begin{equation}
\overline{z_{31}}z_{33}+z_{31}\overline{z_{33}}+|z_{32}|^2=0.
\end{equation}

Writing out $z_{jk}$ in terms of real and imaginary parts in $(5), (6)$ and considering congruence modulo $2$,  we obtain
\begin{equation}
x_{21}^2=-x_{11}x_{31} \pmod 2
\end{equation}
and
\begin{equation}
x_{32}^2=-x_{31}x_{33} \pmod 2.
\end{equation}

We need to show that for all $g=(z_{jk})$ in $\Gamma^{(2)}$ there is an $m\in\{0,1,2\}$ so that
$g_m^{-1}g=(z_{jk}')$ is in $H^{(2)}$($g_0$ is the identity). In other words, $x_{21}',   x_{31}', $ and $x_{32}'$
are all divisible by $2$. We begin by finding $g_m^{-1}g$ so that $2\mid x_{21}',   2\mid x_{31}'$. This is 
sufficient since from $(8)$,  we see that if $2 \mid x_{31}'$, then so is $x_{32}'$.

If $2\mid x_{21}$, then from (8) we see that either $x_{31}$  or $x_{11}$ is also divisible by $2$. In the first case
$g$  has $2\mid x_{21}$  and $2\mid x_{31}$  as required. Now $g_1^{-1}g$  has $x'_{21}=-x_{21}$  and  $x'_{31}=x_{11}$.
Thus if $2\mid x_{21}$  and $2\mid x_{11}$, then $g_1^{-1}g$  has $2\mid x'_{21}$ and $2\mid x'_{31}$.

If $2\nmid x_{21}$, then from  $(7)$ we see that $x_{11}$ and $x_{31}$ can not be divisible by $2$ either. In other words, 
$x_{11}\equiv x_{21}\equiv 1\pmod 2$.  One see that $g_2^{-1}g$  has $x'_{31}=-x_{11}-2y_{21}+x_{31}$ divisible by $2$.

Now we consider $\Gamma_s^{(2)}$. We claim that
$$\Gamma_s^{(2)}=H^{(2)}\cup g'_1 H^{(2)}\cup g'_2 H^{(2)}$$
where
$$g'_1=\left[\begin{array}{ccc}
0&0&i/\sqrt{2}\\
0&1&0\\
i\sqrt{2}&0&0
\end{array}\right],  g'_{2}=\left[\begin{array}{ccc}
1&0&i/\sqrt{2}\\
0&1&0\\
0&0&1
\end{array}\right].$$

Write a general element of $\Gamma_s^{(2)}$ in the form $(3)$, where $z_{13}=x_{13}+iy_{13}/\sqrt{2}$ and the other entries are as before. In order
to show that an element of $\Gamma_s^{(2)}$ is in $H^{(2)}$, we must show that $2\mid y_{13}$.
If $2\mid y_{13}$, then $g$ is in $H^{(2)}$. If $2\mid x_{33}$, then $g'^{-1}_1g$  has $y'_{13}= -x_{33}$ divisible by $2$.  So it is in $H^{(2)}$.
If $y_{13}\equiv x_{33}\equiv 1\pmod 2$.  One see that $g'^{-1}_2g$  has $y'_{13}=y_{13}-x_{33}$ divisible by $2$.
\end{proof}

For the case $d=7,11$, we have

\begin{lemma}$H^{(d)}$ has  index  $d+1$ both in $\Gamma_s^{(d)}$ and 
$\Gamma^{(d)}$.
\end{lemma}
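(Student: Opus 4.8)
The plan is to follow the same coset-decomposition strategy used for $d=2$, but now adapted to the arithmetic of $\mathcal{O}_d$ when $d\equiv 3\pmod 4$ and to the larger index $d+1$. First I would record explicitly what $H^{(d)}=\Gamma_s^{(d)}\cap\Gamma^{(d)}$ looks like: it consists of the matrices in $\mathbf{PU}(2,1)$ which, written in the form (3), have all entries in $\mathcal{O}_d$ (so in particular $z_{13}=x_{13}/2+iy_{13}\sqrt{d}/2$ with $x_{13}\equiv y_{13}\pmod 2$, and similarly for the other entries) together with the divisibility conditions $d\mid x_{21},\ d\mid x_{31},\ d\mid x_{32}$. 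The difference between $\Gamma^{(d)}$ and $H^{(d)}$ is precisely the divisibility of $x_{21},x_{31},x_{32}$ by $d$, while the difference between $\Gamma_s^{(d)}$ and $H^{(d)}$ is the behaviour of the top-right entry $z_{13}$ modulo $d$.

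For the inclusion $\lbrack\Gamma^{(d)}:H^{(d)}\rbrack=d+1$ I would again use the two unitarity relations coming from $g^*Jg=J$, namely
\begin{equation*}
\overline{z_{11}}z_{31}+z_{11}\overline{z_{31}}+|z_{21}|^2=0,\qquad
\overline{z_{31}}z_{33}+z_{31}\overline{z_{33}}+|z_{32}|^2=0.
\end{equation*}
Reducing these modulo $d$ (writing each $z_{jk}$ in terms of its $x_{jk},y_{jk}$ parts) shows that $x_{21}^2\equiv -x_{11}x_{31}$ and $x_{32}^2\equiv -x_{31}x_{33}$ modulo $d$, so it again suffices to arrange $d\mid x_{21}'$ and $d\mid x_{31}'$ after multiplying by a suitable coset representative. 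The natural choice of representatives is a family $g_0=\mathrm{Id},g_1,\dots,g_d$ indexed so that the bottom-left entries $(z_{31}:z_{11})$, viewed projectively modulo $d$, run over the $d+1$ points of $\mathbb{P}^1(\mathbb{F}_d)$; this is exactly why the index is $d+1$ rather than $3$. Concretely I expect $g_1$ to be the inversion-type element swapping the roles of $z_{11}$ and $z_{31}$ (handling the point at infinity of $\mathbb{P}^1$), and $g_2,\dots,g_d$ to be lower-triangular unipotent-type elements $g_k$ whose effect on the bottom row is $x_{31}'\equiv x_{31}-k\,x_{11}\pmod d$ (or similar), so that for any $g$ with $x_{11}\not\equiv 0$ one can solve for the unique $k$ making $d\mid x_{31}'$. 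The case-split would then be: if $d\mid x_{11}$ use $g_1$, otherwise choose the unique $k\in\{1,\dots,d\}$ with $k\,x_{11}\equiv x_{31}\pmod d$ and use $g_k$. One must also verify that distinct representatives give distinct cosets (no two $g_j^{-1}g_k$ lie in $H^{(d)}$), which follows from the projective classes being distinct mod $d$.

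For the inclusion $\lbrack\Gamma_s^{(d)}:H^{(d)}\rbrack=d+1$ I would mirror the $d=2$ argument, but now tracking the reduction of the top-right entry $z_{13}$ modulo $d$ rather than a single parity. One shows that the obstruction to lying in $H^{(d)}$ is measured by a single residue class modulo $d$ of the offending part of $z_{13}$, and that the Heisenberg-translation representatives $g_k'=T^{(d)}_k$ together with the inversion $g_1'=I_0^{(d)}$ sweep out all $d+1$ classes. The main obstacle I anticipate is purely arithmetic bookkeeping: correctly reducing the Hermitian relations modulo $d$ when $d\equiv 3\pmod 4$ (where entries carry the half-integer $\omega_d$ form) and confirming that the chosen representatives really induce a simply transitive action on $\mathbb{P}^1(\mathbb{F}_d)$, so that exactly $d+1$ cosets appear and the two index counts agree. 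Since $d=7$ and $d=11$ differ only in the value of $d$, I would carry the computation out once symbolically in $d$ and then remark that both cases follow, rather than repeating the calculation twice.
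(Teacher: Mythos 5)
Your overall route is the same as the paper's: describe $H^{(d)}$ explicitly, exhibit $d+1$ explicit coset representatives for each of $\Gamma^{(d)}$ and $\Gamma_s^{(d)}$ (the identity, one inversion, and a family of unipotent or vertical-translation elements), and use the two unitarity relations reduced modulo $d$ to show every element falls into one of these cosets. Your treatment of the $\Gamma_s^{(d)}$ half (the inversion $I_0^{(d)}$ plus vertical translations, reducing $y_{13}$ against $x_{33}$ modulo $d$) is exactly the paper's argument. However, your mechanism for the $\Gamma^{(d)}$ half has a genuine gap.

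You posit lower-triangular unipotent representatives $g_k$ acting linearly on the bottom-left entry, $x_{31}'\equiv x_{31}-k\,x_{11}\pmod d$, and you select the coset by solving $k\,x_{11}\equiv x_{31}$. No such elements exist in $\mathbf{U}(2,1)$: a lower-unitriangular matrix preserving $J$ is forced to have $(2,1)$-entry $a$, $(3,2)$-entry $-\bar a$, and $(3,1)$-entry $b$ with $2\Re b=-|a|^2$. Consequently, for $g_m^{-1}g$ the residue of $x_{31}'$ is quadratic in the real parameter $s$ of the representative and coupled to $x_{21}$, of the form $x_{31}'\equiv x_{31}-s\,x_{21}-2^{-1}s^2x_{11}\pmod d$; imposing $x_{31}'\equiv 0$ and simplifying with the mod-$d$ relation coming from unitarity collapses the condition to $(s\,x_{11}+x_{21})^2\equiv 0\pmod d$. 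So the unique usable representative is the one with $s\equiv -x_{21}x_{11}^{-1}$, i.e.\ the one that kills $x_{21}$, not the one you chose via $k\,x_{11}\equiv x_{31}$; with your selection rule, $g_k^{-1}g$ will in general not lie in $H^{(d)}$. This is precisely why the paper proceeds the other way around: choose $k$ with $k\,x_{11}\equiv\pm x_{21}$ (a genuinely linear condition, which produces the two families $g_{2k},g_{2k+1}$, $k=1,\dots,(d-1)/2$), and only then deduce $d\mid x_{31}'$ by applying relation (11) to the new matrix $g_m^{-1}g$, and $d\mid x_{32}'$ from relation (12). For the same reason your $\mathbb{P}^1(\mathbb{F}_d)$ bookkeeping is pinned to the wrong pair of entries: the quadratic relation forces the ratio $(x_{31}:x_{11})$ to be $-2^{-1}$ times a square, so it takes only about half of the $d+1$ possible values and fails to separate cosets (the paper's $g_{2k}$ and $g_{2k+1}$ have identical $(1,1)$ and $(3,1)$ entries yet lie in distinct cosets); the invariant that genuinely runs over $\mathbb{P}^1(\mathbb{F}_d)$ and separates the cosets is $(x_{11}:x_{21})$. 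Your added check that distinct representatives give distinct cosets is a worthwhile point which the paper leaves implicit, but its justification must also be rerouted through $(x_{11}:x_{21})$.
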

\begin{proof}
First, we note that $H^{(d)}$ consists of all matrices in $\mathbf{ PU}(2,1)$, which written in the form $(6)$, have $z_{jk}=x_{jk}/2+i\sqrt{d}y_{jk}/2$,
where $x_{jk}$ and $y_{jk}$ are integers of the same parity, and also $x_{21},x_{31},$ and $x_{32}$ are all divisible by $d$.

We decompose $\Gamma_s^{(d)}$ and 
$\Gamma^{(d)}$  into $d+1$ $H^{(d)}$-cosets as follows. We claim that

$$\Gamma^{(d)}=H^{(d)}\bigcup_{i=1} ^{d}g_i H^{(d)}$$

where
$$g_1=\left[\begin{array}{ccc}
0&0&1\\
0&-1&0\\
1&0&0
\end{array}\right],  g_{2k}=\left[\begin{array}{ccc}
1&0&0\\
k&1&0\\
-k^2\omega_d&-k&1
\end{array}\right],  g_{2k+1}=\left[\begin{array}{ccc}
1&0&0\\
-k&1&0\\
-k^2\omega_d&k&1
\end{array}\right]  $$
where $k=1,2,\cdots, (d-1)/2$.

Let $g\in \Gamma^{(d)}$ be written in the form $(3)$. As $g\in \mathbf{PU}(2,1)$, we have
\begin{equation}
\overline{z_{11}}z_{31}+z_{11}\overline{z_{31}}+|z_{21}|^2=0
\end{equation}
and
\begin{equation}
\overline{z_{31}}z_{33}+z_{31}\overline{z_{33}}+|z_{32}|^2=0.
\end{equation}

Writing out $z_{jk}$ in terms of real and imaginary parts in $(9), (10)$ and considering congruence modulo $7$,  we obtain
\begin{equation}
x_{21}^2=(d-1)x_{11}x_{31} \pmod d
\end{equation}
and
\begin{equation}
x_{32}^2=(d-1)x_{31}x_{33} \pmod d.
\end{equation}

We need to show that for all $g=(z_{jk})$ in $\Gamma^{(d)}$ there is an $m\in\{0,1,2,\cdots d\}$ so that
$g_m^{-1}g=(z_{jk}')$ is in $H^{(d)}$($g_0$ is the identity). In other words, $x_{21}',   x_{31}', $ and $x_{32}'$
are all divisible by $d$. We begin by finding $g_m^{-1}g$ so that $x_{21}',   x_{31}'$ are divisible by $d$. This is 
sufficient since from $(12)$,  we see that if $d\mid x_{31}'$, then so is $x_{32}'$.

If $d\mid x_{21}$, then from (11) we see that either $d\mid x_{31}$  or $d\mid x_{11}$. In the first case
$g$  has $d\mid x_{21}$  and $d\mid x_{31}$ as required. Now $g_1^{-1}g$  has $x'_{21}=-x_{21}$  and  $x'_{31}=x_{11}$.
Thus if $d\mid x_{21}$  and $d\mid x_{11}$, then $g_1^{-1}g$  has $d\mid x'_{21}$ and $d\mid x'_{31}$.

If $x_{21}$ is not divisible by $d$, then from  $(11)$ we see that $x_{11}$ and $x_{31}$ can not be divisible by $d$ either.
There are some cases.

Case 1:  If $kx_{11} \equiv  x_{21} \equiv \pm1,\pm 2,\cdots, \pm (d-1)/2 \pmod d$, then $g_{2k}^{-1}g$ has $x_{21}'=x_{21}-kx_{11}$ divisible by $d$ and $x_{11}'=x_{11}$
not divisible by $d$. From $(11)$ we see that this means $g_{2k}^{-1}g$ has $x_{31}'$ divisible by $d$ as well.

Case 2:    If $kx_{11} \equiv  -x_{21} \equiv \pm1,\pm 2,\cdots, \pm (d-1)/2 \pmod d$, then $g_{2k+1}^{-1}g$ has $x_{21}'=x_{21}+kx_{11}$ divisible by $d$ and $x_{11}'=x_{11}$
not divisible by $d$. From $(11)$ we see that this means $g_{2k+1}^{-1}g$ has $x_{31}'$ divisible by $d$ as well.

Now we consider $\Gamma_s^{(d)}$. We claim that
$$\Gamma_s^{(d)}=H^{(d)}\bigcup_{i=1} ^{d}g'_i H^{(d)}$$
where
$$g'_1=\left[\begin{array}{ccc}
0&0&i/\sqrt{d}\\
0&1&0\\
i\sqrt{d}&0&0
\end{array}\right],  g'_{2k}=\left[\begin{array}{ccc}
1&0&ik/\sqrt{d}\\
0&1&0\\
0&0&1
\end{array}\right],  g'_{2k+1}=\left[\begin{array}{ccc}
1&0&-ik/\sqrt{d}\\
0&1&0\\
0&0&1
\end{array}\right]  $$
$(k=1,2,\cdots (d-1)/2)$.

Write a general element of $\Gamma_s^{(d)}$ in the form $(3)$, where $z_{13}=x_{13}/2+iy_{13}/2\sqrt{d}$ and the other entries are as before. In order
to show that an element of $\Gamma_s^{(d)}$ is in $H^{(d)}$, we must show that $d\mid y_{13}$.
If $d\mid y_{13}$, then $g$ is in $H^{(d)}$. If $d\mid x_{33}$, then $g'^{-1}_1g$  has $y'_{13}= -x_{33}$ divisible by $d$.  It is in $H^{(d)}$.

If $d\nmid y_{13}$ and $d\nmid x_{33}$. Then there are also some cases.

Case 1:  If $y_{13}\equiv k x_{33}\pmod 7$, then $g'^{-1}_{2k}g$ has $y'_{13}=y_{13}-kx_{33}$ divisible by $d$.

Case 2:  If $y_{13}\equiv -kx_{33} \pmod 7$, then $g'^{-1}_{2k+1}g$ has $y'_{13}=y_{13}+kx_{33}$ divisible by $d$.

\end{proof}

\begin{proposition}The sister groups $\Gamma^{(d)}_s$ has only one cusp.
\end{proposition}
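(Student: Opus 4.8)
The plan is to prove that $\Gamma_s^{(d)}$ acts transitively on its set of parabolic fixed points. Let $O$ denote this set. Since $\mathbb{Q}(i\sqrt{d})$ has class number one for $d=2,7,11$, it is known that $\Gamma^{(d)}$ has a single cusp, i.e. $O=\Gamma^{(d)}q_{\infty}$ is one $\Gamma^{(d)}$-orbit. Because $H^{(d)}=\Gamma^{(d)}\cap\Gamma_s^{(d)}$ has finite index in both groups (Lemmas 3.1 and 3.2), a boundary point is fixed by a parabolic element of one of these groups exactly when it is fixed by a parabolic of the other: any parabolic $\gamma$ in the larger group has a power $\gamma^{n}\in H^{(d)}$, still parabolic with the same fixed point. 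Hence $\Gamma^{(d)}$, $H^{(d)}$ and $\Gamma_s^{(d)}$ share the same set $O$, and it suffices to prove $\Gamma_s^{(d)}q_{\infty}=O$.

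First I would record the one identification that the sister group inherits from $\Gamma^{(d)}$: the inversions satisfy $g_1 q_{\infty}=q_0=g'_1 q_{\infty}$, where $q_0=(0,0,1)^{t}$, so $q_0\in\Gamma_s^{(d)}q_{\infty}$. As $q_{\infty}\in\Gamma_s^{(d)}q_{\infty}$ and $H^{(d)}\subseteq\Gamma_s^{(d)}$, this gives $H^{(d)}q_{\infty}\cup H^{(d)}q_0\subseteq\Gamma_s^{(d)}q_{\infty}$. The remaining task is to show $O=H^{(d)}q_{\infty}\cup H^{(d)}q_0$, that is, every parabolic point is $H^{(d)}$-equivalent to $q_{\infty}$ or to $q_0$. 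Writing the decomposition of Lemma 3.2 as $\Gamma^{(d)}=\bigsqcup_i H^{(d)}g_i^{-1}$ and using $O=\Gamma^{(d)}q_{\infty}$, every $p\in O$ has the form $h\,g_i^{-1}q_{\infty}$ with $h\in H^{(d)}$; so it is enough to check that each representative $g_i^{-1}q_{\infty}$ lies in $H^{(d)}q_{\infty}\cup H^{(d)}q_0$. For $g_0=I$ and the inversion $g_1$ this is immediate.

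The heart of the argument is the lower-triangular representatives $g_{2k},g_{2k+1}$. A direct check shows $g_{2k}^{-1}q_{\infty}=(1,-k,k^{2}(\omega_d-1))^{t}$ is not $H^{(d)}$-equivalent to $q_{\infty}$: the $(3,1)$-entry would force $x_{31}=-k^{2}$, which is not divisible by $d$. So one must place it in the orbit of $q_0$. Since $q_0=g_1 q_{\infty}$, this means showing $g_{2k}^{-1}\in H^{(d)}g_1 P_1$, where $P_1=\mathrm{Stab}_{\Gamma^{(d)}}(q_{\infty})$. Concretely I would seek a Heisenberg translation $q\in P_1$ for which $h:=g_{2k}^{-1}\,q\,g_1$ lies in $H^{(d)}$; then $g_{2k}^{-1}q_{\infty}=h\,g_1 q_{\infty}=h\,q_0\in H^{(d)}q_0$, and the analogous element handles $g_{2k+1}$. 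For instance, for $d=7,\ k=1$ one may take $q=T_{(1+\omega_7,\,0)}$, and a short computation gives a matrix with all entries in $\mathcal{O}_7$ satisfying $x_{21}=x_{31}=7$ and $x_{32}=0$, hence in $H^{(7)}$.

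The main obstacle is exactly this last step: for each $d\in\{2,7,11\}$ and each $k$ one must produce the translation parameter (and, where needed, a vertical component) solving simultaneously the integrality and same-parity conditions together with the divisibility congruences $d\mid x_{21}(h),\ d\mid x_{31}(h),\ d\mid x_{32}(h)$ that define $H^{(d)}$. This is a finite computation governed by the same congruences modulo $d$ used in Lemmas 3.1 and 3.2 (relations $(11)$–$(12)$), to be carried out case by case, with $d=2$ using the slightly different integrality conditions of Lemma 3.1. Once these finitely many memberships are verified, $O=H^{(d)}q_{\infty}\cup H^{(d)}q_0\subseteq\Gamma_s^{(d)}q_{\infty}\subseteq O$, so $\Gamma_s^{(d)}$ is transitive on $O$ and has a single cusp.
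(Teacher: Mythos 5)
Your overall strategy matches the paper's: use that $\Gamma^{(d)}$ has one cusp, that $H^{(d)}$ has finite index in both $\Gamma^{(d)}$ and $\Gamma^{(d)}_s$, decompose along the coset representatives of Lemmas 3.1 and 3.2, and use the inversion $g_1'\in\Gamma^{(d)}_s$ to merge the orbits of $0$ and $\infty$. But there is a genuine gap at what you yourself call the heart of the argument. By basing the orbit decomposition at $q_\infty$, i.e.\ writing $O=\Gamma^{(d)}q_\infty=\bigcup_i H^{(d)}g_i^{-1}q_\infty$, you are forced to show for every lower-triangular representative that $g_{2k}^{-1}q_\infty=(1,-k,k^2(\omega_d-1))^t$ lies in $H^{(d)}q_0\cup H^{(d)}q_\infty$. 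You verify this only for $d=7$, $k=1$; for $d=2$ and for the remaining representatives ($k=2,3$ when $d=7$, and $k=1,\dots,5$ when $d=11$, each $k$ giving two representatives) you merely assert that the required translation $q$ with $g_{2k}^{-1}qg_1\in H^{(d)}$ exists, calling it ``a finite computation''. No such computations are exhibited, and no general existence argument is given --- it is not a priori clear that a Heisenberg translation (even with a vertical component) always solves the simultaneous parity and divisibility congruences defining $H^{(d)}$ --- so the proof is incomplete as written. A secondary flaw: your claim that $g_{2k}^{-1}q_\infty\notin H^{(d)}q_\infty$ because ``the $(3,1)$-entry would force $x_{31}=-k^2$'' is not justified; equivalence means some product $hg_{2k}^{-1}$ with $h\in H^{(d)}$ is upper triangular, and the $(3,1)$-entry of that product is a sum of terms, not an entry of $g_{2k}^{-1}$.

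The gap disappears --- and you recover essentially the paper's proof --- if you base the decomposition at $q_0$ instead of $q_\infty$. Since $q_0=g_1q_\infty$ and $O$ is a single $\Gamma^{(d)}$-orbit, $O=\Gamma^{(d)}q_0=\bigcup_i H^{(d)}g_i^{-1}q_0$. Every lower-triangular representative ($g_2$ for $d=2$, and $g_{2k}$, $g_{2k+1}$ for $d=7,11$) has third column $(0,0,1)^t$, hence fixes $q_0$, while $g_1^{-1}q_0=q_\infty$. Therefore $O=H^{(d)}q_0\cup H^{(d)}q_\infty$ with no computation at all, and your final paragraph then finishes: both $q_0$ and $q_\infty$ lie in $\Gamma^{(d)}_sq_\infty$ via $g_1'$, so $O=\Gamma^{(d)}_sq_\infty$ and there is exactly one cusp. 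This choice of base point is precisely the device in the paper's proof: it takes the fundamental domain $D$ of $\Gamma^{(d)}$ with its cusp at $0$, so that the candidate cusps of $H^{(d)}$ collapse to $\{0,\infty\}$, and then applies $h=g_1'$.
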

\begin{proof}
We only give a proof for the case $d=2$. The other cases are similar. Note that  $\Gamma^{(d)}$ has only one cusp.
Without loss of generality we can assume that $D$ is the fundamental domain for $\Gamma^{(2)}$ with the cusp based at $0$. Then the fundamental domain for 
$H^{(2)}$ is $D\bigcup g_1(D)\bigcup g_2(D)$  and  the  candidate cusps of $H^{(2)}$ are $\infty, g_1(0)=g_2(0)=0$. In order to show that $\Gamma^{(2)}_s$ has only one cusp, it is sufficient to show that  there is a transformations $h\in\Gamma^{(2)}_s$ satisfying  $h(0)=\infty$.
In fact, we can find  such $h\in\Gamma^{(2)}_s$, where
$$h=\left[\begin{array}{ccc}
0&0&i/\sqrt{2}\\
0&1&0\\
i\sqrt{2}&0&0
\end{array}\right].$$

\end{proof}

\section{On the structure of the stabilizer}

In this section we will obtain the generators and relations of the stabilizer of the sister of Picard modular groups by analysis of the fundamental domain in Heisenberg group.

\subsection{The stabilizer of $q_\infty$}

First we want to analyse $(\Gamma^{(d)}_s)_\infty$ with $d=2,7,11$, the stabilizer of $q_\infty$. Every element of $(\Gamma^{(d)}_s)_\infty$ is upper triangular and its diagonal entries are units in $\mathcal{O}_d$. Recall that the units of $\mathcal{O}_1$ are $\pm1,\pm i$, they are $\pm1,\pm\omega,\pm\omega^2$ for $\mathcal{O}_3$ and they are $\pm1$ for others. Therefore $(\Gamma^{(d)}_s)_\infty$ contains no dilations and so is a subgroup of $Isom(\mathfrak{R})$ and fits into the exact sequence as
$$0\longrightarrow\mathbb{R}\cap(\Gamma^{(d)}_s)_\infty\longrightarrow
(\Gamma^{(d)}_s)_\infty\stackrel{\Pi_{*}}{\longrightarrow}\Pi_{*}\left((\Gamma^{(d)}_s)_\infty\right)\longrightarrow1.$$

We can write the isometry group of the integer lattice as
\begin{equation*}
Isom(\mathcal{O}_d)=\left\{\left[\begin{array}{cc}\alpha&\beta\\0&1\end{array}\right]:\alpha,\beta\in\mathcal{O}_d, \alpha\ \text{is a unit}\right\}.
\end{equation*}

We now find the image and kernel in this exact sequence.

\begin{proposition}\label{eq:3-1}The stabilizer
$(\Gamma^{(d)}_s)_\infty$ of $q_{\infty}$ in $\Gamma^{(d)}_s$ satisfies
$$0\longrightarrow \frac{2}{\sqrt{d}}\mathbb{Z}\longrightarrow(\Gamma^{(d)}_s)_\infty\stackrel{\Pi_{*}}{\longrightarrow}\Delta^{(d)}\longrightarrow1,$$
where $\Delta^{(d)}\subset Isom(\mathcal{O}_d)$ is of index 2 if $d\equiv2\pmod4$ and $\Delta^{(d)}=Isom(\mathcal{O}_d)$ if $d\equiv3\pmod4$.
\end{proposition}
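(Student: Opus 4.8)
The plan is to compute the kernel and image of $\Pi_*$ restricted to $(\Gamma^{(d)}_s)_\infty$ directly from the explicit matrix form, reading off the arithmetic conditions defining $\Gamma_s^{(d)}$. Since the diagonal of any element of $(\Gamma^{(d)}_s)_\infty$ consists of units and contains no dilation, every such element lies in $Isom(\mathfrak{R})$ and so is determined by three parameters $(e^{i\theta},z,t)$ through
$$\left[\begin{array}{ccc}1&-\overline{z}e^{i\theta}&-(|z|^2-it)/2\\0&e^{i\theta}&z\\0&0&1\end{array}\right],$$
with $z_{22}=e^{i\theta}$, $z_{23}=z$, and $z_{13}=-|z|^2/2+it/2$. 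Under $\Pi_*$ this maps to the element $w\mapsto e^{i\theta}w+z$ of $Isom(\mathbb{C})$, so the rotation and horizontal translation are visible in the image while $t$ sits in the kernel direction. Because the units of $\mathcal{O}_d$ are $\pm1$ for $d=2,7,11$, we always have $e^{i\theta}=\pm1$; and since $z_{23}=z\in\mathcal{O}_d$ forces $z_{12}=-\overline{z}e^{i\theta}$ into $\mathcal{O}_d$ automatically, the only nontrivial constraint is the one placed on $z_{13}$.

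First I would identify the kernel $\mathbb{R}\cap(\Gamma^{(d)}_s)_\infty$, i.e. the vertical translations ($e^{i\theta}=1$, $z=0$), for which $z_{13}=it/2$ is purely imaginary. Imposing the defining form of $z_{13}$ kills the real part and pins down the imaginary part: for $d=2$ one gets $it/2=iy_{13}/\sqrt2$ with $y_{13}\in\mathbb{Z}$, hence $t\in\sqrt2\,\mathbb{Z}=\tfrac{2}{\sqrt2}\mathbb{Z}$; for $d\equiv3\pmod4$ the same-parity requirement with $x_{13}=0$ forces $y_{13}$ even, so $t=y_{13}/\sqrt d\in\tfrac{2}{\sqrt d}\mathbb{Z}$. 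In both cases the kernel is exactly $\tfrac{2}{\sqrt d}\mathbb{Z}$, as claimed.

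The heart of the argument is the image $\Delta^{(d)}=\Pi_*\big((\Gamma^{(d)}_s)_\infty\big)$. A pair $(e^{i\theta},z)$ with $e^{i\theta}=\pm1$ and $z\in\mathcal{O}_d$ lies in $\Delta^{(d)}$ precisely when some choice of $t$ makes $z_{13}=-|z|^2/2+it/2$ admissible. The imaginary part is governed by $t$ alone and, by the kernel computation, can always be adjusted (the admissible $t$ form a single coset of $\tfrac{2}{\sqrt d}\mathbb{Z}$); so the real part $-|z|^2/2$ is the only genuine restriction on $z$. For $d=2$, admissibility demands $-|z|^2/2=x_{13}\in\mathbb{Z}$, i.e. $|z|^2$ even; writing $z=a+i\sqrt2\,b$ gives $|z|^2=a^2+2b^2$, which is even iff $a$ is even, so $\Delta^{(2)}$ is the index-two subgroup of $Isom(\mathcal{O}_2)$ whose translation part has even real part. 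For $d\equiv3\pmod4$, writing $z=x/2+i\sqrt d\,y/2$ with $x\equiv y\pmod2$ gives $|z|^2=(x^2+dy^2)/4$, and the key point is that $x^2+dy^2\equiv0\pmod4$ whenever $x\equiv y\pmod2$ (both even is clear; both odd uses $d\equiv3\pmod4$, giving $x^2+dy^2\equiv1+3\equiv0\pmod4$). Hence $x_{13}=-|z|^2$ is automatically an integer of the correct parity class, no condition is imposed on $z$, and $\Delta^{(d)}=Isom(\mathcal{O}_d)$.

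I expect this last parity computation to be the main obstacle: it is exactly where the dichotomy between $d\equiv2$ and $d\equiv3\pmod4$ originates, and one must check that for $d\equiv3\pmod4$ the half-integer real part $-|z|^2/2=x_{13}/2$ genuinely meets condition (a) (an integer $x_{13}$ of the same parity as an admissible $y_{13}$) for every $z\in\mathcal{O}_d$. Once the kernel and image are identified, exactness is immediate: $\Pi_*$ is onto $\Delta^{(d)}$ by definition of the image, its kernel is $\tfrac{2}{\sqrt d}\mathbb{Z}$, and $\Delta^{(d)}$ is a subgroup of $Isom(\mathcal{O}_d)$ as the homomorphic image of a group. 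A cross-check against the explicit generators $R^{(d)}_1,R^{(d)}_2,R^{(d)}_3,T^{(d)}$ of the main theorems confirms both the translation lattice and the rotation part of $\Delta^{(d)}$.
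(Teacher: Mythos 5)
Your proof is correct and follows the same basic strategy as the paper's: analyze the kernel and image of $\Pi_*$ on the explicit upper-triangular form of stabilizer elements, with the sister condition on $z_{13}$ as the only effective constraint. The difference is one of completeness rather than route. The paper's proof is three sentences: it notes that $\Pi_*(A)$ is independent of $z_{13}$, cites \cite{z2} (``as in the proof of'') for the image dichotomy, and declares the kernel ``easily seen'' to be the vertical translations by $2n/\sqrt{d}$. You carry out exactly the computations being outsourced: the kernel identification ($x_{13}=0$ forces $y_{13}$ even, giving $t\in\frac{2}{\sqrt{d}}\mathbb{Z}$), the parity argument $x^2+dy^2\equiv0\pmod{4}$ whenever $x\equiv y\pmod{2}$ and $d\equiv3\pmod{4}$ (which is precisely what makes $\Delta^{(d)}=Isom(\mathcal{O}_d)$ in that case), and the evenness condition on $\Re z$ that cuts out the index-2 subgroup when $d=2$. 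This added detail has real value here, because \cite{z2} treats the Picard groups themselves, where $z_{13}\in\mathcal{O}_d$; under the sister condition $z_{13}$ instead satisfies condition (a), and that is what rescales the kernel from the $2\sqrt{d}\,\mathbb{Z}$ of the Picard case to $\frac{2}{\sqrt{d}}\mathbb{Z}$ here, so the citation alone does not literally cover the present situation and your direct verification closes that gap. Your explicit identification of $\Delta^{(2)}$ as the translations with even real part is also consistent with the paper's side-pairings $r^{(2)}_2,r^{(2)}_3$ and the fundamental domain in Figure 1(A).
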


\begin{proof} From the explicit construction of $\Pi_{\ast}$, we see that for $A\in(\Gamma^{(d)}_s)_\infty$, $\Pi_{\ast}(A)$ is not dependent on the entry $z_{13}$ of $A$. As in the proof
of \cite{z2}, we obtain that $\Delta^{(d)}=\Pi_{\ast}\left((\Gamma^{(d)}_s)_\infty\right)$ is $Isom(\mathcal{O}_d)$ if $d\equiv3(mod\ 4)$ and $\Delta^{(d)}\subset Isom(\mathcal{O}_d)$ is of index 2 if $d\equiv2(mod\ 4)$.
Likewise, the kernel of $\Pi_{\ast}$ is easily seen to consist of those vertical translation in $(\Gamma_d)_\infty$, that is, Heisenberg translation $(0,2n/\sqrt{d})\in\mathfrak{R}$ for $n\in\mathbb{Z}$.
 \end{proof}

\subsection{Generators  for the $\Delta^{(d)}$}
The  fundamental domain and the generators of $\Delta^{(d)}=\Pi_*\left(\Gamma^{(d)}_s)_\infty\right)$ was described explicitly in \cite{z2}.

  A fundamental domain for $\Delta^{(2)}=\Pi_*\left((\Gamma^{(2)}_s)_\infty\right)$ is the triangle in  $\mathbb{C}$ with vertices at $-1+\sqrt{2}i/2$ and $1\pm\sqrt{2}i/2$; see (a) in Figure 3.1.
Side paring maps are given by
$$r^{(2)}_1=\left[\begin{array}{cc}-1&0\\0&1\end{array}\right],\ r^{(2)}_2=\left[\begin{array}{cc}-1&2\\0&1\end{array}\right],\
r^{(2)}_3=\left[\begin{array}{cc}-1&\sqrt{2}i\\0&1\end{array}\right].
$$
The first of these is a rotation of order 2 fixing origin, the second is a rotation of order 2 fixing $1/2$ and the third is a rotation of order 2 fixing $\sqrt{2}i/2$.  One can see that these side paring
maps are also the generators of $\Pi_*\left((\Gamma^{(2)}_s)_\infty\right)$.

 A fundamental domain for $\Delta^{(d)}=Isom(\mathcal{O}_d)$ with $d=7$ or $11$ is the triangle in  $\mathbb{C}$ with vertices at $(-1+i\sqrt{d})/4$, $(1-i\sqrt{d})/4$ and $(3+i\sqrt{d})/4$; see (b) in Figure 3.1. Side paring maps are given by
$$r^{(d)}_1=\left[\begin{array}{cc}-1&0\\0&1\end{array}\right],\ r^{(d)}_2=\left[\begin{array}{cc}-1&1\\0&1\end{array}\right],\
r^{(d)}_3=\left[\begin{array}{cc}-1&(1+i\sqrt{d})/2\\0&1\end{array}\right].
$$
All these maps are rotations by $\pi$ fixing $0,1/2$ and $(1+i\sqrt{d})/4$ respectively.

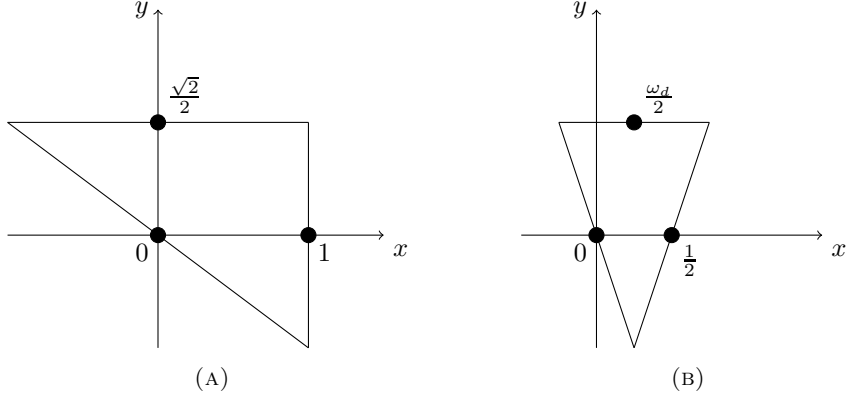
\begin{figure}
\begin{subfigure}{.5 \linewidth}
\centering
\begin{tikzpicture}[scale=1]
\draw[->](-2,0)--(3,0);
\draw[->](0,-1.5,0)--(0,3);
\draw (-2,1.5)--(2,1.5);
\draw(-2,1.5)--(2,-1.5);
\draw(2,-1.5)--(2,1.5);
\draw [fill] (0,0) circle [radius=0.1];
\draw [fill] (0,1.5) circle [radius=0.1];
\draw [fill] (2,0) circle [radius=0.1];
\node [below right] at (3,0) {$x$};
\node [left] at (0,3) {$y$};
\node[below left] at (0,0) {$0$};
\node[below right] at (2,0) {$1$};
\node[above right] at (0,1.5) {$\frac{\sqrt{2}}{2}$};
\end{tikzpicture}
\caption{} 
\end{subfigure}%
\begin{subfigure}{.5\linewidth}
\centering
\begin{tikzpicture}[scale=1]
\draw[->](-1,0)--(3,0);
\draw[->](0,-1.5,0)--(0,3);
\draw (-0.5,1.5)--(1.5,1.5);
\draw(-0.5,1.5)--(0.5,-1.5);
\draw(0.5,-1.5)--(1.5,1.5);
\draw [fill] (0,0) circle [radius=0.1];
\draw [fill] (0.5,1.5) circle [radius=0.1];
\draw [fill] (1,0) circle [radius=0.1];
\node [below right] at (3,0) {$x$};
\node [left] at (0,3) {$y$};
\node[below left] at (0,0) {$0$};
\node[below right] at (1,0) {$\frac{1}{2}$};
\node[above right] at (0.5,1.5) {$\frac{\omega_d}{2}$};
\end{tikzpicture}
\caption{} 
\end{subfigure}%
\caption{(A) Fundamental domian for $\Delta^{(d)}$ with $d\equiv2\pmod4$.  (B). Fundamental domian for $\Delta^{(d)}$ with $d\equiv3\pmod4$.} 
\end{figure}

\medskip

In order to produce a fundamental domain for $(\Gamma^{(d)}_s)_\infty$ we look at all the preimages of
the triangle (that is a fundamental domain of $\Pi_*\left((\Gamma^{(d)}_s)_\infty\right) $ under vertical projection
$\Pi$ and we intersect this with a fundamental domain for $ker(\Pi_*)$. The inverse of the image of the
triangle under $\Pi$ is an infinite prism. The kernel of $\Pi_*$ is the infinite cyclic group
generated by $T^{(d)}$, the vertical translation by $(0,2/\sqrt{d})$.

\medskip

Next, we will study the fundamental domain and the generators of stabilizer case by case.

\begin{proposition} $(\Gamma^{(2)}_s)_\infty$ is generated by $$ R^{(2)}_1=\left[\begin{array}{ccc}1&0&0\\0&-1&0\\0&0&1\end{array}\right],
 R^{(2)}_2=\left[\begin{array}{ccc}1&2&-2\\0&-1&2\\0&0&1\end{array}\right],
 R^{(2)}_3=\left[\begin{array}{ccc}1&-i\sqrt{2}&-1\\0&-1&i\sqrt{2}\\0&0&1\end{array}\right]$$
and $$T^{(2)}=\left[\begin{array}{ccc}1&0&i/\sqrt{2}\\0&1&0\\0&0&1\end{array}\right].$$

A presentation is given by
\begin{align*}(\Gamma^{(2)}_s)_{\infty}&=\langle R^{(2)}_j,T^{(2)}|T^{(2)}R^{(2)}_1{T^{(2)}}^{-1}R^{(2)}_1={R^{(2)}_1}^2={T^{(2)}}^{-1}R^{(2)}_2T^{(2)}R^{(2)}_2={R^{(2)}_2}^2\\
&=({R^{(2)}_1}^{-1}{T^{(2)}}^{-2}{R^{(2)}_3}^{-1}{T^{(2)}}^{-2}{R^{(2)}_1)}^2=T^{(2)}{R^{(2)}_3}^{-1}{T^{(2)}}^{-1}R^{(2)}_3\rangle.
\end{align*}
\end{proposition}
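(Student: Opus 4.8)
The plan is to read off both assertions from the short exact sequence of Proposition~\ref{eq:3-1} together with a Poincar\'e polyhedron argument on a prism in the Heisenberg group. For the generation statement I would argue purely algebraically. Proposition~\ref{eq:3-1} gives
$$0\longrightarrow \tfrac{2}{\sqrt{2}}\mathbb{Z}\longrightarrow(\Gamma^{(2)}_s)_\infty\stackrel{\Pi_{*}}{\longrightarrow}\Delta^{(2)}\longrightarrow1,$$
in which the kernel is the infinite cyclic group of vertical translations generated by $T^{(2)}$ and $\Delta^{(2)}=\Pi_{*}\big((\Gamma^{(2)}_s)_\infty\big)$ is the planar group generated by the three half-turns $r^{(2)}_1,r^{(2)}_2,r^{(2)}_3$ listed for the triangular fundamental domain. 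First I would check by a direct matrix computation that $\Pi_{*}(R^{(2)}_j)=r^{(2)}_j$ for $j=1,2,3$ (each $R^{(2)}_j$ has rotation part $-1$ and horizontal translation part equal to the translation part of $r^{(2)}_j$), and that $T^{(2)}$ is exactly the vertical translation generating $\ker\Pi_{*}$. Granting this, the elementary fact that in a group extension the lifts of a generating set of the quotient, together with a generating set of the kernel, generate the whole group immediately yields that $R^{(2)}_1,R^{(2)}_2,R^{(2)}_3,T^{(2)}$ generate $(\Gamma^{(2)}_s)_\infty$.

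For the presentation I would pass to the geometry of the action on $\partial\mathfrak{S}\setminus\{q_\infty\}=\mathfrak{R}$. Following the recipe described before the figure, a fundamental domain $P$ for $(\Gamma^{(2)}_s)_\infty$ is the prism $\Pi^{-1}(\Delta_0)$ over the fundamental triangle $\Delta_0$ of $\Delta^{(2)}$, intersected with one fundamental period of the vertical translation $T^{(2)}$. Its boundary consists of three vertical faces, one over each side of $\Delta_0$, together with a top and a bottom face. Each half-turn $R^{(2)}_j$ folds the vertical face over the $j$-th side onto itself, while $T^{(2)}$ pairs the top face with the bottom face. I would then invoke the Poincar\'e polyhedron theorem: after verifying the tessellation and local-finiteness hypotheses, it re-proves that the side-pairings generate and produces a complete set of defining relations, one cycle relation per orbit of ridges of $P$.

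The relations then split into routine ones and one essential one. The fold axes of the vertical faces give the involution relations ${R^{(2)}_j}^2=1$, and the horizontal ridges where a vertical face meets the top or bottom face give the relations expressing that the central translation $T^{(2)}$ commutes with each $R^{(2)}_j$; these reproduce the relators ${R^{(2)}_1}^2$, ${R^{(2)}_2}^2$, $T^{(2)}R^{(2)}_1{T^{(2)}}^{-1}R^{(2)}_1$, ${T^{(2)}}^{-1}R^{(2)}_2T^{(2)}R^{(2)}_2$ and $T^{(2)}{R^{(2)}_3}^{-1}{T^{(2)}}^{-1}R^{(2)}_3$ of the statement. The decisive relation is the vertex cycle: the three sides of $\Delta_0$ meet in vertices forming a single $\Delta^{(2)}$-orbit, and in the plane the cycle transformation $r^{(2)}_1r^{(2)}_2r^{(2)}_3$ is a half-turn, so $(r^{(2)}_1r^{(2)}_2r^{(2)}_3)^2=1$. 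The point is that, lifted to the prism, this relation does not close up to the identity but to a nontrivial vertical translation — the Heisenberg holonomy around the vertex — so that in $(\Gamma^{(2)}_s)_\infty$ it reads $(R^{(2)}_1R^{(2)}_2R^{(2)}_3)^2=(T^{(2)})^{k}$ for a definite integer $k$, which multiplying the matrices I expect to be $k=8$. A sequence of Tietze transformations, using the centrality of $T^{(2)}$ and the involutivity relations, then rewrites this vertex relation as the word $({R^{(2)}_1}^{-1}{T^{(2)}}^{-2}{R^{(2)}_3}^{-1}{T^{(2)}}^{-2}R^{(2)}_1)^2$ appearing in the statement, giving the claimed presentation.

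The main obstacle will be two-fold. Geometrically, one must genuinely verify the hypotheses of the Poincar\'e polyhedron theorem for $P$ — that the translates of $P$ tile $\mathfrak{R}$ with precisely the prescribed face identifications, that the configuration is locally finite, and that every ridge cycle has the order predicted by the dihedral angles — rather than merely writing down plausible relations. Algebraically, the delicate part is the exact bookkeeping of the central vertical-translation (power-of-$T^{(2)}$) contribution in each cycle, since it is precisely these corrections that distinguish $(\Gamma^{(2)}_s)_\infty$ from its planar image $\Delta^{(2)}$; in particular, pinning down the holonomy exponent in the vertex relation, and then carrying out the Tietze rewriting faithfully, is where the real care is needed.
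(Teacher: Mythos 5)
Your outline (generation from the exact sequence $0\to\sqrt{2}\,\mathbb{Z}\to(\Gamma^{(2)}_s)_\infty\to\Delta^{(2)}\to1$, presentation from a Poincar\'e-type analysis of the prism) is the same strategy as the paper's, and the generation argument is fine. The first genuine gap is geometric: your description of the side pairings of the prism is false. You assert that each half-turn $R^{(2)}_j$ folds the vertical face over the $j$-th side of the triangle onto itself, with only $T^{(2)}$ pairing top and bottom. That is true only for $R^{(2)}_1$, whose Heisenberg action $(z,t)\mapsto(-z,t)$ has no vertical shear. For $R^{(2)}_2(z,t)=(-z+2,\,t+4\Im z)$ and $R^{(2)}_3(z,t)=(-z+i\sqrt{2},\,t-2\sqrt{2}\Re z)$ the $t$-coordinate is sheared by as much as $\pm2\sqrt{2}$, which exceeds the height $\sqrt{2}$ of the fundamental slab, so these maps carry the corresponding vertical faces out of the prism entirely. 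This is exactly why the paper subdivides those two faces (the points $u_i^{\pm}$ of Table 1) and uses eight side pairings $R^{(2)}_1$, $R^{(2)}_2$, $T^{(2)}R^{(2)}_2$, ${T^{(2)}}^2R^{(2)}_2$, $R^{(2)}_3$, $T^{(2)}R^{(2)}_3$, ${T^{(2)}}^2R^{(2)}_3$, $T^{(2)}$. With your four pairings the side-pairing hypothesis of Poincar\'e's theorem fails outright, so neither the tessellation check nor the ridge-cycle enumeration can be run as you describe; the actual cycle relation comes from an edge cycle such as $v_1\to v_2\to v_3\to v_1$ with cycle transformation $R^{(2)}_1\cdot{T^{(2)}}^2R^{(2)}_3\cdot{T^{(2)}}^2R^{(2)}_2$, whose square is the identity.

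The second gap is algebraic, and it is not merely "delicate bookkeeping": the Tietze rewriting you assert is impossible. Your vertex relation $(R^{(2)}_1R^{(2)}_2R^{(2)}_3)^2={T^{(2)}}^{8}$ is correct (your guess $k=8$ is right), and together with centrality of $T^{(2)}$ and $ {R^{(2)}_1}^2={R^{(2)}_2}^2={R^{(2)}_3}^2=1$ it does present the group, by your own extension argument applied to the $p2$ presentation $\langle r_1,r_2,r_3\mid r_j^2=(r_1r_2r_3)^2=1\rangle$ of $\Delta^{(2)}$. But no sequence of Tietze moves can convert it into the printed word $({R^{(2)}_1}^{-1}{T^{(2)}}^{-2}{R^{(2)}_3}^{-1}{T^{(2)}}^{-2}R^{(2)}_1)^2$: that word contains no $R^{(2)}_2$ at all, and modulo the routine relations it collapses to ${T^{(2)}}^{-8}$; a direct computation with the matrices confirms $({R^{(2)}_1}^{-1}{T^{(2)}}^{-2}{R^{(2)}_3}^{-1}{T^{(2)}}^{-2}R^{(2)}_1)^2={T^{(2)}}^{-8}\neq\mathrm{id}$. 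Modulo centrality and the involution relations, your vertex relation is instead equivalent to $({R^{(2)}_2}^{-1}{T^{(2)}}^{-2}{R^{(2)}_3}^{-1}{T^{(2)}}^{-2}R^{(2)}_1)^2=1$, i.e.\ the statement's relator has $R^{(2)}_1$ where $R^{(2)}_2$ must stand (and the printed list also omits ${R^{(2)}_3}^2$, without which the quotient by $\langle T^{(2)}\rangle$ is $\mathbb{Z}/2\ast\mathbb{Z}/2\ast\mathbb{Z}/2$ rather than $p2$). So the presentation you were asked to reproduce is itself misprinted; a sound writeup has to derive the corrected relator set and flag the discrepancy, whereas yours asserts an equivalence that is demonstrably false --- precisely the kind of central-translation bookkeeping error your final paragraph warns against.
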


\begin{proof}
Those matrices are constructed by lifting generators of the subgroup $\Delta^{(2)}\subset Isom(\mathcal{O}_2)$ of index 2
and also $T^{(2)}$ is a generator of the kernel of the map $\Pi_*$. A fundamental domain can be constructed with
side pairings as Figure 2, where the vertices of the prism in horospherical coordinate are
$v^{+}_1=(1-i\sqrt{2}/2,\sqrt{2}/2), v^{+}_2=(1+i\sqrt{2}/2,\sqrt{2}/2), v^{+}_3=(-1+i\sqrt{2}/2,\sqrt{2}/2)$,  for the upper cap of the prism
and $v^{-}_1=(1-i\sqrt{2}/2,-\sqrt{2}/2), v^{-}_2=(1+i\sqrt{2}/2,-\sqrt{2}/2), v^{-}_3=(-1+i\sqrt{2}/2,-\sqrt{2}/2)$ for the base. 
In order to get the side-pairing maps, we need to introduce some  points
 on the faces of the prism. See Table 1.

\begin{table}[ht]
\caption{Points introduced on the prism $\Sigma_2$} 
\centering 
\begin{tabular}{c c c c} 
\hline 
Points & Horospherical coordinate & Points & Horospherical coordinate \\ [0.5ex] 
\hline 
$u^{\pm}_1$&$(0,\pm\sqrt{2}/2)$&$u^{\pm}_2$&$(1-i\sqrt{2}/4\pm\sqrt{2}/2)$\\ 
$u^{\pm}_3$&$(1,\pm\sqrt{2}/2)$&$u^{\pm}_4$&$(1+i\sqrt{2}/4,\pm\sqrt{2}/2)$\\
$u^{\pm}_5$&$(1/2+i\sqrt{2}/2,\pm\sqrt{2}/2)$&$u^{\pm}_6$&$(i\sqrt{2}/2,\pm\sqrt{2}/2) $\\
$u^{\pm}_7$&$(-1/2+i\sqrt{2}/2,\pm\sqrt{2}/2)$&& \\
\hline 
\end{tabular}
\label{table:nonlin}
\end{table}

The actions of the side-pairing maps on $\mathfrak{R}$ are given by
\begin{align*}
R^{(2)}_1(z,t)&=(-z,t),\\
R^{(2)}_2(z,t)&=(-z+2,t+4\Im m{z}),\\
R^{(2)}_3(z,t)&=(-z+i\sqrt{2},t-2\sqrt{2}\Re z),\\
T^{(2)}(\zeta,t)&=(\zeta,t+\sqrt{2}).
\end{align*}
We describe the side pairings in terms of their actions on the vertices:

\begin{eqnarray*}
R^{(2)}_1&: &(u^{+}_1, u^{-}_1, v^{+}_1, v^{-}_1)\rightarrow (u^{+}_1, u^{-}_1, v^{+}_3, v^{-}_3)\\
R^{(2)}_2&: &(u^{+}_2, u^{+}_3, u^{-}_3)\rightarrow (u^{-}_4, u^{+}_3, u^{-}_3)\\
T^{(2)}R^{(2)}_2&: &(v_1^{+}, u_2^{+}, u_3^{-}, u_2^{-})\rightarrow (v_2^{-}, u_4^{+}, u_3^{+}, u_4^{-})\\
{T^{(2)}}^2R^{(2)}_2&: &(v_1^{+}, v_1^{-}, u_2^{-})\rightarrow (v_2^{+}, v_2^{-}, u_4^{+})\\
R^{(2)}_3&: &(u_5^{+}, u_6^{+}, u_6^{-})\rightarrow (u_7^{-}, u_6^{+}, u_6^{-})\\
T^{(2)}R^{(2)}_3&: &(u_5^{+}, u_6^{-}, u_5^{-}, v_2^{+})\rightarrow (u_7^{+}, u_6^{+}, u_7^{-}, v_3^{-})\\
{T^{(2)}}^2R^{(2)}_3&:& (v_2^{+}, v_2^{-}, u_5^{-})\rightarrow (v_3^{+}, v_3^{-}, u_7^{+})\\
T^{(2)}&: &(v_1^{-}, v_2^{-}, v_3^{-}, u_1^{-}, u_2^{-}, u_3^{-}, u_4^{-}, u_5^{-}, u_6^{-}, u_7^{-}) \rightarrow \\ &&(v_1^{+}, v_2^{+}, v_3^{+}, u_1^{+}, u_2^{+}, u_3^{+}, u_4^{+}, u_5^{+}, u_6^{+}, u_7^{+} )
\end{eqnarray*}

The presentation can be obtained  from the edge cycles of the fundamental domain.
\end{proof}

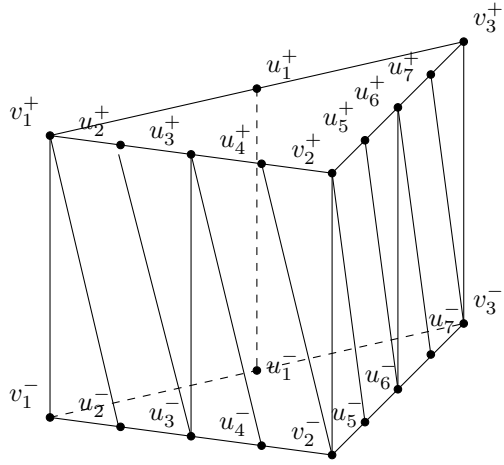
\begin{figure}
\begin{tikzpicture}[x=.5cm,y=.5cm]
\draw (0,0)--(11,2.5)--(7.5,-1)--cycle;
\draw (0,-7.5)--(7.5,-8.5)--(11,-5);
\draw[style=dashed] (0,-7.5) -- (11,-5);   
\draw (0,-7.5)--(0,0);  
\draw (11,-5)--(11,2.5);  
  \draw (7.5,-8.5)--(7.5,-1);    
\draw [fill] (0,0) circle [radius=0.1];
 \draw [fill] (0,-7.5) circle [radius=0.1];   
\draw [fill] (11,2.5) circle [radius=0.1];  \draw [fill] (11,-5) circle [radius=0.1]; 
 \draw [fill] (7.5,-1) circle [radius=0.1];   \draw [fill] (7.5,-8.5) circle [radius=0.1];
\draw [fill] (5.5,1.25) circle [radius=0.1];    \draw [fill] (5.5,-6.25) circle [radius=0.1]; \draw[style=dashed] (5.5,1.25) -- (5.5,-6.25); 
\draw [fill] (1.875,-0.25) circle [radius=0.1];  \draw [fill] (1.875,-7.75) circle [radius=0.1]; 
\draw [fill] (3.75,-0.5) circle [radius=0.1];  \draw [fill] (3.75,-8) circle [radius=0.1]; 
 \draw [fill] (5.625,-0.75) circle [radius=0.1];  \draw [fill] (5.625,-8.25) circle [radius=0.1];    
 
 \draw [fill] (8.375,-0.125) circle [radius=0.1];   \draw [fill] (8.375,-7.625) circle [radius=0.1];
  \draw [fill] (9.25,0.75) circle [radius=0.1];   \draw [fill] (9.25,-6.75) circle [radius=0.1];   
    \draw [fill] (10.125,1.625) circle [radius=0.1];      \draw [fill] (10.125,-5.825) circle [radius=0.1];  
 
 \draw (0,0)--(1.825,-7.75) ; \draw(1.825,-0.5)--(3.75,-8);\draw(3.75,-8)--(3.75,-0.5);\draw(3.75,-0.5)--(5.625,-8.25);\draw(5.625,-0.75)--(7.5,-8.5);
 \draw(7.5,-1)--(8.375,-7.625);\draw(8.375,-0.125)--(9.25,-6.75);\draw(9.25,0.75)--(9.25,-6.75);\draw(9.25,0.75)--(10.125,-5.825);\draw(10.125,1.625)--(11,-5);

 \node[above left] at (0,0) {$v^{+}_1$};   \node[above left] at (0,-7.5) {$v^{-}_1$};  
  \node[above left] at (7.5,-1) {$v^{+}_2$};   \node[above left] at (7.5,-8.5) {$v^{-}_2$};   
 \node[above right] at (11,2.5) {$v^{+}_3$};     \node[above right] at (11,-5) {$v^{-}_3$};    
  \node[above right] at (5.5,1.25) {$u^{+}_1$};  \node[above right] at (5.5,-6.75) {$u^{-}_1$};   
  
  \node[above left] at (1.875,-0.25) {$u^{+}_2$};    \node[above left] at (1.875,-7.75) {$u^{-}_2$};      
 \node[above left] at (3.75,-0.5) {$u^{+}_3$};  \node[above left] at (3.75,-8) {$u^{-}_3$}; 
    \node[above left] at (5.625,-0.75) {$u^{+}_4$};   \node[above left] at (5.625,-8.25) {$u^{-}_4$};  
     
    \node[above left ] at (8.375,-0.15) {$u^{+}_5$};    \node[above left ] at (8.6,-8) {$u^{-}_5$};      
 \node[above left] at (9.2,0.55) {$u^{+}_6$};  \node[above left] at (9.5,-7) {$u^{-}_6$}; 
    \node[above left] at (10.1,1.2) {$u^{+}_7$};   \node[above ] at (10.5,-5.5) {$u^{-}_7$};    
  \end{tikzpicture}  
  \caption{Fundamental domain for $(\Gamma_{s}^{(2)})_{\infty}$ in Heisenberg group}
  
  \end{figure}

\medskip

\begin{proposition} $(\Gamma^{(7)}_s)_\infty$ is generated by $$ R^{(7)}_1=\left[\begin{array}{ccc}1&0&0\\0&-1&0\\0&0&1\end{array}\right],
 R^{(7)}_2=\left[\begin{array}{ccc}1&1&i\omega_7/\sqrt{7}\\0&-1&1\\0&0&1\end{array}\right],
 R^{(7)}_3=\left[\begin{array}{ccc}1&\bar{\omega}_7&-1\\0&-1&\omega_7\\0&0&1\end{array}\right]$$
and $$T^{(7)}=\left[\begin{array}{ccc}1&0&i/\sqrt{7}\\0&1&0\\0&0&1\end{array}\right].$$

A presentation is given by
\begin{align*}(\Gamma_7)_{\infty}&=\langle R^{(7)}_j,T^{(7)}|T^{(7)}R^{(7)}_1{T^{(7)}}^{-1}R^{(7)}_1={R^{(7)}_1}^2=T^{(7)}{R^{(7)}_2}^{-1}{T^{(7)}}^{-1}R^{(7)}_2\\
&=(R^{(7)}_1T^{(7)}R^{(7)}_3{T^{(7)}}^2R^{(7)}_2)^2=(R^{(7)}_1{T^{(7)}}^2R^{(7)}_3T^{(7)}R^{(7)}_2)^2\\ &={R^{(7)}_2}^{-1}T^{(7)}{R^{(7)}_2}^{-1}=T^{(7)}{R^{(7)}_3}^{-1}{T^{(7)}}^{-1}R^{(7)}_3={R^{(7)}_3}^2\rangle.
\end{align*}
\end{proposition}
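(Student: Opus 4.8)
The plan is to reproduce, for $d=7$, the two-stage argument already carried out for $(\Gamma^{(2)}_s)_\infty$: first deduce generation from the extension of Proposition~\ref{eq:3-1}, and then obtain the presentation from the ridge cycles of an explicit fundamental prism via the Poincar\'e polyhedron theorem.

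For the generating set, I would first check by direct matrix multiplication that $\Pi_*(R^{(7)}_j)=r^{(7)}_j$ for $j=1,2,3$; that is, the three matrices $R^{(7)}_j$ are lifts of the side-pairing generators of the triangular fundamental domain of $\Delta^{(7)}=Isom(\mathcal{O}_7)$ described above, while $T^{(7)}$, the vertical translation by $(0,2/\sqrt7)$, generates $\ker\Pi_*=\tfrac{2}{\sqrt7}\mathbb{Z}$ by Proposition~\ref{eq:3-1}. Because any extension $1\to K\to G\to Q\to 1$ is generated by a generating set of $K$ together with lifts of a generating set of $Q$, applying this to the sequence of Proposition~\ref{eq:3-1} immediately shows that $R^{(7)}_1,R^{(7)}_2,R^{(7)}_3,T^{(7)}$ generate $(\Gamma^{(7)}_s)_\infty$.

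For the presentation I would follow the recipe stated before the $d=2$ proposition. Pulling the triangle with vertices $(-1+i\sqrt7)/4$, $(1-i\sqrt7)/4$, $(3+i\sqrt7)/4$ back under the vertical projection $\Pi$ gives an infinite prism, and intersecting it with the fundamental slab $-1/\sqrt7\le t\le 1/\sqrt7$ of the cyclic kernel $\langle T^{(7)}\rangle$ produces a prism $\Sigma_7\subset\mathfrak{R}$ with two triangular caps and three slanted side walls. I would record its six vertices in horospherical coordinates and, in analogy with Table~\ref{table:nonlin}, subdivide the side walls by auxiliary points. Computing the Heisenberg actions shows $R^{(7)}_1(z,t)=(-z,t)$, $R^{(7)}_2(z,t)=(-z+1,t+\cdots)$, $R^{(7)}_3(z,t)=(-z+\omega_7,t+\cdots)$ and $T^{(7)}(z,t)=(z,t+2/\sqrt7)$; since $R^{(7)}_2$ and $R^{(7)}_3$ carry a vertical shear, the genuine side-pairing maps are compositions $\,{T^{(7)}}^{m}R^{(7)}_j\,$ for suitable integers $m$, exactly as in the $d=2$ table. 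I would then list each side pairing together with the two faces it identifies and verify that it is a Cygan isometry taking one face of $\Sigma_7$ onto another.

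Finally I would invoke the Poincar\'e polyhedron theorem. A few relations are direct matrix identities --- for instance ${R^{(7)}_1}^2={R^{(7)}_3}^2=1$, ${R^{(7)}_2}^2=T^{(7)}$ (the lift of the order-two rotation $r^{(7)}_2$ squares to a vertical translation), and the commuting relations $T^{(7)}R^{(7)}_1{T^{(7)}}^{-1}R^{(7)}_1=T^{(7)}{R^{(7)}_2}^{-1}{T^{(7)}}^{-1}R^{(7)}_2=1$ --- and I would check these at once. The remaining relations, such as $(R^{(7)}_1T^{(7)}R^{(7)}_3{T^{(7)}}^2R^{(7)}_2)^2=1$, arise as the cycle relations attached to the edges of $\Sigma_7$; once the tessellation and ridge-cycle conditions are verified, the theorem guarantees both that the side pairings generate and that the full list of cycle relations is a complete set of defining relations, reducing to the stated presentation. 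The main obstacle is exactly this combinatorial bookkeeping: because of the vertical shears the face identifications are nontrivial, and one must trace every edge around its cycle and verify the cycle (and consistency) conditions of the Poincar\'e theorem to be certain the resulting relations are complete and reduce precisely to the given presentation.
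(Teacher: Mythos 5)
Your proposal is correct and follows essentially the same route as the paper: generation comes from lifting the side-pairing generators of $\Delta^{(7)}=Isom(\mathcal{O}_7)$ through the exact sequence of Proposition 4.1 together with the kernel generator $T^{(7)}$, and the presentation comes from the edge cycles of the prism $\Sigma_7$ (preimage of the triangle under $\Pi$ cut by a fundamental slab of $\ker\Pi_*$), with side pairings of the form ${T^{(7)}}^{m}R^{(7)}_j$. Your explicit matrix checks (e.g. ${R^{(7)}_2}^2=T^{(7)}$, ${R^{(7)}_3}^2=1$) match the relations in the stated presentation, and the Poincar\'e-theorem bookkeeping you flag as the remaining work is exactly what the paper compresses into its table of auxiliary points and the sentence that the presentation ``can be obtained from the edge cycles.''
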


\begin{proof}
Those matrices are constructed by lifting generators of $Isom(\mathcal{O}_7)$ and also $T^{(7)}$ is a generator of the kernel of the map $\Pi_*$.
A fundamental domain can be constructed with
side pairings as Figure 3, where the vertices of the prism are
$v^{+}_1=(1/4-i\sqrt{7}/4,\sqrt{7}/7), v^{+}_2=(3/4+i\sqrt{7}/4,\sqrt{7}/7),v^{+}_3=(-1/4+i\sqrt{7}/4,\sqrt{7}/7)$ for the upper cap of the prism
and $v^{-}_1=(1/4-i\sqrt{7}/4,-\sqrt{7}/7), v^{-}_2=(3/4+i\sqrt{7}/4,-\sqrt{7}/7),v^{-}_3=(-1/4+i\sqrt{7}/4,-\sqrt{7}/7),$   for the base.
We also  introduce more points  on the faces of the prism which allow us to get the side-pairing maps.  See Table 2.

\begin{table}[ht]
\caption{Points introduced on the prism $\Sigma_7$.} 
\centering 
\begin{tabular}{c c c c} 
\hline 
Points & Horospherical coordinate & Points & Horospherical coordinate \\ [0.5ex] 
\hline 
$u^{\pm}_1$&$(0,\pm\sqrt{7}/7)$&$u^{\pm}_2$&$(2/7-i3\sqrt{7}/14, \pm\sqrt{7}/7)$\\ 
$u^{\pm}_3$&$(3/7-i\sqrt{7}/14,\pm\sqrt{7}/7)$&$u^{\pm}_4$&$(2/7-i3\sqrt{7}/14,\pm\sqrt{7}/7)$\\
$u^{\pm}_5$&$(4/7-i\sqrt{7}/14,\pm\sqrt{7}/7)$&$u^{\pm}_6$&$(5/7+i3\sqrt{7}/14,\pm\sqrt{7}/7)$\\
$u^{\pm}_7$&$(15/28+i\sqrt{7}/4,\pm\sqrt{7}/7)$&$u^{\pm}_8$&$(1/4+i\sqrt{7}/4,\pm\sqrt{7}/7) $ \\
$u^{\pm}_9$&$(-1/28+i\sqrt{7}/4,\pm\sqrt{7}/7)$&$w_1$&$(1/4-i\sqrt{7}/4,-\sqrt{7}/14)$\\
$w_2$&$(1/2,0)$&$w_3$&$(3/4+i\sqrt{7}/4,\sqrt{7}/14)$\\
$w_4$&$(-1/4+i\sqrt{7}/4,-\sqrt{7}/14)$&&\\
\hline 
\end{tabular}
\label{table:nonlin}
\end{table}

The actions of side-pairing maps on $\mathfrak{R}$ are given by

\begin{align*}
R^{(7)}_1(z,t)&=(-z,t),\\
R^{(7)}_2(z,t)&=\left(-z+1,t+2\Im(z)+\frac{1}{\sqrt{7}}\right),\\
R^{(7)}_3(z,t)&=\left(-z+\omega_7,t-2\Im(\overline{\omega_7}z)\right),\\
T^{(7)}(z,t)&=(z,t+2/\sqrt{7})
\end{align*}

We describe the side pairings in terms of their actions on the vertices:

\begin{eqnarray*}
R^{(7)}_1&: &(u^{+}_1,u^{-}_1,v^{+}_1,v^{-}_1)\rightarrow (u^{+}_1,u^{-}_1,v^{+}_3, v^{-}_3),\\
R^{(7)}_2&: &(u^{+}_2,u^{-}_3,u^{-}_4,w_2,u_3^{+})\rightarrow (u^{-}_6,u^{-}_5,w_2,u^{+}_4,u_5^{+}),\\
T^{(7)}R^{(7)}_2&: &(w_1,u_2^{-},u_3^{-},u_2^{+},v_1^{+})\rightarrow (v_2^{-},u_6^{-},u_5^{+},u_6^{+},w_3),\\
{T^{(7)}}^2R^{(7)}_2&: &(w_1,v_1^{-},u_2^{-})\rightarrow (v_2^{+},w_3,u_6^{+}),\\
{T^{(7)}}^{-1}R^{(7)}_2&:& (u_3^{+},w_2,u_4^{+})\rightarrow (u_5^{-},u_4^{-},w_2),\\
{T^{(7)}}^2R^{(7)}_3&: &(w_3,v_2^{-},u_7^{-})\rightarrow (v_3^{+},w_4,u_9^{+}),\\
T^{(7)}R^{(7)}_3&:&(v_2^{+}w_3u_7^{-}u_7^{+}u_8^{-})\rightarrow (w_4v_3^{-}u_9^{-}u_9^{+}u_8^{+}),\\
R^{(7)}_3&:& (u_8^{-},u_8^{+},u_7^{+})\rightarrow (u_8^{-},u_8^{+},u_9^{-}),\\
T^{(7)}&:&(v_1^{-},u^{-}_2,u^{-}_3,u^{-}_4,u^{-}_5,u^{-}_6,v_2^{-}, u^{-}_7,u^{-}_8, u^{-}_9, v_3^{-})\rightarrow   \\   &&(v_1^{+},u^{+}_2,u^{+}_3,u^{+}_4,u^{+}_5,u^{+}_6,v_2^{+}, u^{+}_7,u^{+}_8, u^{+}_9, v_3^{+}).
\end{eqnarray*}

The presentation can be obtained  from the edge cycles of the fundamental domain.

\end{proof}

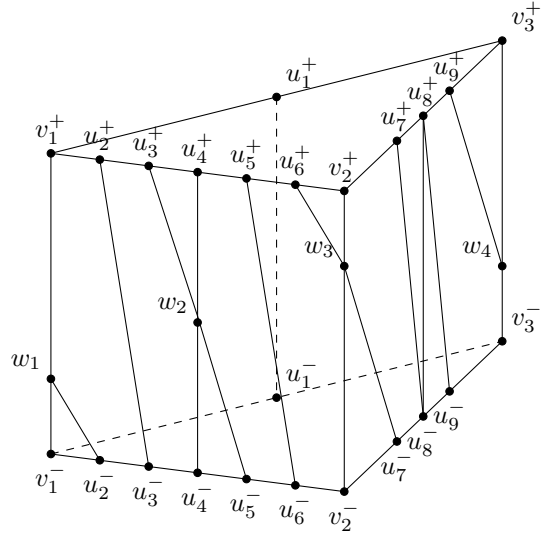
\begin{figure}
\begin{tikzpicture}[x=.5cm,y=.5cm]
\draw (0,0)--(12,3)--(7.8,-1)--cycle;
\draw (0,-8)--(7.8,-9)--(12,-5);
\draw[style=dashed] (0,-8) -- (12,-5);   
\draw (0,-8)--(0,0);  
\draw (12,-5)--(12,3);  
  \draw (7.8,-1)--(7.8,-9);    
\draw [fill] (0,0) circle [radius=0.1];
 \draw [fill] (0,-8) circle [radius=0.1];   
\draw [fill] (12,3) circle [radius=0.1];  \draw [fill] (12,-5) circle [radius=0.1]; 
 \draw [fill] (7.8,-1) circle [radius=0.1];   \draw [fill] (7.8,-9) circle [radius=0.1];
\draw [fill] (6,1.5) circle [radius=0.1];    \draw [fill] (6,-6.5) circle [radius=0.1]; \draw[style=dashed] (6,1.5) -- (6,-6.5); 
 
 \node[above ] at (0,0) {$v^{+}_1$};   \node[below] at (0,-8) {$v^{-}_1$};  
  \node[above ] at (7.8,-1) {$v^{+}_2$};   \node[below] at (7.8,-9) {$v^{-}_2$};   
 \node[above right] at (12,3) {$v^{+}_3$};     \node[above right] at (12,-5) {$v^{-}_3$};    
  \node[above right] at (6,1.5) {$u^{+}_1$};  \node[above right] at (6,-6.5) {$u^{-}_1$};  
  
\draw [fill] (1.3,-1/6) circle [radius=0.1];  \node[above ] at (1.3,-1/6) {$u^{+}_2$};  \draw [fill] (1.3,-1/6-8) circle [radius=0.1];\node[below] at (1.3,-1/6-8) {$u^{-}_2$};
  \draw [fill] (2.6,-2/6) circle [radius=0.1]; \node[above ] at (2.6,-2/6) {$u^{+}_3$}; \draw [fill] (2.6,-2/6-8) circle [radius=0.1];\node[below ] at (2.6,-2/6-8) {$u^{-}_3$}; 
    \draw [fill] (3.9,-3/6) circle [radius=0.1];  \node[above ] at (3.9,-3/6) {$u^{+}_4$};   \draw [fill] (3.9,-3/6-8) circle [radius=0.1]; \node[below ] at (3.9,-3/6-8) {$u^{-}_4$}; \draw [fill] (3.9,-3/6-4) circle [radius=0.1]; 
    \node[above  left] at (0,-6) {$w_1$};  \draw [fill] (5.2,-4/6) circle [radius=0.1];   \node[above ] at (5.2,-4/6) {$u^{+}_5$};    \draw [fill] (5.2,-4/6-8) circle [radius=0.1];     \node[below ] at (5.2,-4/6-8) {$u^{-}_5$};    
\draw [fill] (6.5,-5/6) circle [radius=0.1];     \node[above ] at (6.5,-5/6) {$u^{+}_6$};     \draw [fill] (6.5,-5/6-8) circle [radius=0.1];   \node[below ] at (6.5,-5/6-8) {$u^{-}_6$};        
  \draw [fill] (0,-6) circle [radius=0.1];\draw [fill] (7.8,-3) circle [radius=0.1]; \node[above left ] at (3.9,-3/6-4) {$w_2$};\node[above  left] at (7.8,-3) {$w_3$};

 \draw (0,-6)--(1.3,-1/6-8);\draw(6.5,-5/6)--(7.8,-3); \draw (1.3,-1/6)--(2.6,-2/6-8);\draw(3.9,-3/6)--(3.9,-3/6-8);
  \draw(2.6,-2/6)--(3.9,-2/6-4);\draw(3.9,-2/6-4)--(5.2,-4/6-8);\draw(5.2,-4/6)--(6.5,-5/6-8);

\draw [fill] (7.8+2*0.7,-1+4/3) circle [radius=0.1];      \node[above ] at (7.8+0.7*2,-1+4/3) {$u^{+}_7$};           \draw [fill] (7.8+2*0.7,-1+4/3-8) circle [radius=0.1];  \node[below ] at (7.8+0.7*2,-1+4/3-8) {$u^{-}_7$};   
 \draw [fill] (7.8+3*0.7,-1+6/3) circle [radius=0.1];    \node[above ] at (7.8+0.7*3,-1+6/3) {$u^{+}_8$};       \draw [fill] (7.8+3*0.7,-1+6/3-8) circle [radius=0.1];     \node[below ] at (7.8+0.7*3,-1+6/3-8) {$u^{-}_8$};   
  \draw [fill] (7.8+4*0.7,-1+8/3) circle [radius=0.1];     \node[above ] at (7.8+0.7*4,-1+8/3) {$u^{+}_9$};          \draw [fill] (7.8+4*0.7,-1+8/3-8) circle [radius=0.1];    \node[below ] at (7.8+0.7*4,-1+8/3-8) {$u^{-}_9$};
  
     \draw [fill] (12,-3) circle [radius=0.1];    \node[above left ] at (12,-3) {$w_4$};  
\draw(7.8,-3)--(7.8+0.7*2,-1+4/3-8);     

\draw(7.8+0.7*2,-1+4/3)--(7.8+0.7*3,-1+2-8);\draw(7.8+0.7*3,-1+6/3)--(7.8+0.7*3,-1+2-8);     \draw(7.8+0.7*3,-1+6/3)--(7.8+0.7*4,-1+8/3-8);   
 \draw(7.8+0.7*4,-1+8/3)--(12,3-6);

    \end{tikzpicture}  
  \caption{Fundamental domain for $(\Gamma_{s}^{(7)})_{\infty}$ in Heisenberg group}
\end{figure}

\medskip

\begin{proposition} $(\Gamma^{(11)}_s)_\infty$ is generated by $$ R^{(11)}_1=\left[\begin{array}{ccc}1&0&0\\0&-1&0\\0&0&1\end{array}\right],
 R^{(11)}_2=\left[\begin{array}{ccc}1&1&i\omega_{11}/\sqrt{11}\\0&-1&1\\0&0&1\end{array}\right],
 $$
$$R^{(11)}_3=\left[\begin{array}{ccc}1&\bar{\omega}_{11}&i(-1+3\omega_{11})/\sqrt{11}\\0&-1&\omega_{11}\\0&0&1\end{array}\right]\quad \text{and}\quad T^{(11)}=\left[\begin{array}{ccc}1&0&i/\sqrt{11}\\0&1&0\\0&0&1\end{array}\right].$$

A presentation is given by
\begin{align*}(\Gamma_{11})_{\infty}&=\langle R^{(11)}_j,T^{(11)}|T^{(11)}{R^{(11)}_3}^{-1}{T^{(11)}}^{-1}R^{(11)}_3 =T^{(11)}R^{(11)}_1{T^{(11)}}^{-1}R^{(11)}_1\\
&={R^{(11)}_1}^{-1}{T^{(11)}}^3R^{(11)}_3{T^{(11)}}^2R^{(11)}_2{R^{(11)}_1}^{-1}{T^{(11)}}^2R^{(11)}_3{T^{(11)}}^2R^{(11)}_2\\
&={R^{(11)}_1}^{-1}{T^{(11)}}^2R^{(11)}_3{T^{(11)}}^3R^{(11)}_2{R^{(11)}_1}^{-1}{T^{(11)}}^2R^{(11)}_3{T^{(11)}}^2R^{(11)}_2\\&
=T^{(11)}{R^{(11)}_2}^{-1}{T^{(11)}}^{-1}R^{(11)}_2={R^{(11)}_1}^2\rangle.
\end{align*}
\end{proposition}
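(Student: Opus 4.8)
The plan is to follow exactly the template used above for $(\Gamma^{(2)}_s)_\infty$ and $(\Gamma^{(7)}_s)_\infty$. Since $11\equiv 3\pmod 4$, Proposition~\ref{eq:3-1} supplies the exact sequence
$$0\longrightarrow \tfrac{2}{\sqrt{11}}\Z \longrightarrow (\Gamma^{(11)}_s)_\infty \stackrel{\Pi_*}{\longrightarrow} \Delta^{(11)}\longrightarrow 1,\qquad \Delta^{(11)}=Isom(\mathcal{O}_{11}).$$
I would first use this sequence to prove generation, and then obtain the presentation by building an explicit fundamental domain in the Heisenberg group $\mathfrak{R}$ and invoking Poincar\'e's polyhedron theorem.

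For generation, a direct inspection of the $(2,2)$- and $(2,3)$-entries shows that $R^{(11)}_1,R^{(11)}_2,R^{(11)}_3$ lie in $Isom(\mathfrak{R})$ and project under $\Pi_*$ onto $r^{(11)}_1,r^{(11)}_2,r^{(11)}_3$, the three half-turns that generate $\Delta^{(11)}=Isom(\mathcal{O}_{11})$ according to the subsection on $\Delta^{(d)}$, while $T^{(11)}$ is the vertical translation by $(0,2/\sqrt{11})$ generating $\ker\Pi_*=\tfrac{2}{\sqrt{11}}\Z$. Hence, given $A\in(\Gamma^{(11)}_s)_\infty$, I write $\Pi_*(A)$ as a word $w$ in the $r^{(11)}_j$ and let $W$ be the corresponding word in the $R^{(11)}_j$; then $\Pi_*(W^{-1}A)=1$, so $W^{-1}A$ is a power of $T^{(11)}$ and $A$ lies in the subgroup generated by the four elements.

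For the presentation I would construct the fundamental domain $\Sigma_{11}\subset\mathfrak{R}$ as in the $d=7$ case: over the triangular fundamental domain of $\Delta^{(11)}$, with vertices $(-1+i\sqrt{11})/4$, $(1-i\sqrt{11})/4$ and $(3+i\sqrt{11})/4$, I take the infinite vertical prism under the projection $\Pi$ and cut it by a fundamental slab $|t|\le 1/\sqrt{11}=\sqrt{11}/11$ for $\ker\Pi_*$, giving a solid prism with upper cap at $t=\sqrt{11}/11$ and base at $t=-\sqrt{11}/11$. The side-pairing maps are the three generators together with $T^{(11)}$-twists such as $T^{(11)}R^{(11)}_j$ and ${T^{(11)}}^2R^{(11)}_j$, and the map $T^{(11)}$ identifying base with cap. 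These twists are forced because each $R^{(11)}_j$ projects to a half-turn of $\mathbb{C}$ but acts on the vertical coordinate by a shear $t\mapsto t+(\text{real-linear in }z)+\text{const}$; it therefore carries a vertical wall of the prism to a sheared copy protruding from the slab, so that distinct horizontal sub-faces of each wall are matched only after composing with the appropriate power of $T^{(11)}$. As before I would introduce auxiliary points $u^{\pm}_i$ and $w_i$ subdividing the faces and specify each pairing by its action on the vertices and these points.

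The decisive step is Poincar\'e's polyhedron theorem. I would verify its hypotheses for $\Sigma_{11}$ and the above side-pairings (the translates of $\Sigma_{11}$ tile a neighbourhood of every face and every ridge, the pairings are fixed-point free in the interior, and the consistency and completeness conditions hold), and then read off one relation from each edge cycle together with its cycle transformation; assembled, these should reproduce the relators displayed in the statement. I expect this edge-cycle analysis to be the main obstacle. Because $d=11$ yields a taller prism than the $d=2,7$ cases, the walls are cut into more sub-faces, the cycle transformations involve higher powers of $T^{(11)}$ (the relators already exhibit ${T^{(11)}}^3$ and long alternating words in the $R^{(11)}_j$ and $T^{(11)}$), and confirming that the dihedral angles around each ridge add up so that every cycle closes with trivial holonomy is a delicate bookkeeping task rather than a routine verification.
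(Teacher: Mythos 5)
Your proposal follows essentially the same route as the paper's proof: the paper likewise constructs the $R^{(11)}_j$ by lifting the generators $r^{(11)}_j$ of $Isom(\mathcal{O}_{11})$, takes $T^{(11)}$ as a generator of $\ker\Pi_*$, builds the prism fundamental domain $\Sigma_{11}$ in the Heisenberg group (upper cap and base at $t=\pm\sqrt{11}/11$, with auxiliary points $u^{\pm}_i$, $w_i$ and $T^{(11)}$-twisted side pairings such as ${T^{(11)}}^{k}R^{(11)}_j$), and reads the presentation off the edge cycles. The only differences are of emphasis: you make explicit the exact-sequence generation argument that the paper leaves implicit, while the paper supplies the explicit coordinates and side-pairing lists whose verification you correctly identify as the main bookkeeping burden.
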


\begin{proof}
Those matrices are constructed by lifting generators of $Isom(\mathcal{O}_{11})$ and also $T^{(11)}$ is a generator of the kernel of the map $\Pi_*$.
A fundamental domain can be constructed with
side pairings as Figure 4, where the vertices of the prism are
$v^{\pm}_1=(1/4-i\sqrt{11}/4,\pm\sqrt{11}/11), v^{\pm}_2=(3/4+i\sqrt{11}/4,\pm\sqrt{11}/11),v^{\pm}_3=(-1/4+i\sqrt{11}/4,\pm\sqrt{11}/11)$. 
In particular, we introduce more  points on the faces of the prism. See Table 3.

\begin{table}[ht]
\caption{Points introduced on the prism $\Sigma_{11}$} 
\centering 
\begin{tabular}{c c c c} 
\hline 
Points & Horospherical coordinate & Points & Horospherical coordinate \\ [0.5ex] 
\hline 
$u^{\pm}_1$&$(0,\pm\sqrt{11}/11)$&$u^{\pm}_2$&$(3/11-i5\sqrt{11}/22,\pm\sqrt{11}/11)$\\ 
$u^{\pm}_3$&$(8/22-i3\sqrt{11}/22,\pm\sqrt{11}/11)$&$u^{\pm}_4$&$(10/22-i\sqrt{11}/22,\pm\sqrt{11}/11)$\\
$u^{\pm}_5$&$(1/2,\pm\sqrt{11}/11)$&$u^{\pm}_6$&$(12/22+i\sqrt{11}/22,\pm\sqrt{11}/11)$\\
$u^{\pm}_7$&$(14/22+i3\sqrt{11}/22,\pm\sqrt{11}/11)$&$u^{\pm}_8$&$(8/11+i5\sqrt{11}/22,\pm\sqrt{11}/11) $ \\
$u^{\pm}_9$&$(31/44+i\sqrt{11}/44,\pm\sqrt{11}/11)$& $u^{\pm}_{10}$&$(23/44+i\sqrt{11}/4,\pm\sqrt{11}/11)$\\
$ u^{\pm}_{11}$&$(15/44-i\sqrt{11}/4,\pm\sqrt{11}/11)$&$ u^{\pm}_{12}$&$(1/4+i\sqrt{11}/4,\pm\sqrt{11}/11)$\\
$u^{\pm}_{13}$&$(7/44+i\sqrt{11}/4,\pm\sqrt{11}/11)$&$u^{\pm}_{14}$&$(-1/44+i\sqrt{11}/4,\pm\sqrt{11}/11)$\\
$u^{\pm}_{15}$&$(-9/44+i\sqrt{11}/4,\pm\sqrt{11}/11)$&$w_1$&$(1/4-i\sqrt{11}/4, -\sqrt{11}/22)$\\
$w_2$&$(1/2, 0)$&$w_3$&$(3/4+i\sqrt{11}/4, \sqrt{11}/22)$\\
$w_4$&$(3/4+i\sqrt{11}/4, -\sqrt{11}/22)$&$w_5$&$(1/4+i\sqrt{11}/4, 0)$\\
$w_6$&$(-1/4+i\sqrt{11}/4, \sqrt{11}/22)$&$w_7$&$(-1/4+i\sqrt{11}/4, 0)$\\
$w_8$&$(-1/4+i\sqrt{11}/4, -\sqrt{11}/22)$&$w_9$&$(3/4+i\sqrt{11}/4, 0)$\\
$w_{10}$&$(1/4-i\sqrt{11}/4, \sqrt{11}/22)$&$w_{11}$&$(1/4-i\sqrt{11}/4, 0)$\\
\hline 
\end{tabular}
\label{table:nonlin}
\end{table}

The actions of side-pairing maps on $\mathfrak{R}$ are given by
\begin{align*}
R^{(11)}_1(z,t)&=(-z,t),\\
R^{(11)}_2(z,t)&=\left(-z+1,t+2\Im(z)+\frac{1}{\sqrt{11}}\right),\\
R^{(11)}_3(z,t)&=\left(-z+\omega_7,t+2\Im(\overline{\omega_{11}}z)+\frac{1}{\sqrt{11}}\right),\\
T^{(11)}(z,t)&=(z,t+2/\sqrt{11}).
\end{align*}

We describe the side pairing in terms of the action on the vertice:

\begin{eqnarray*}
R^{(11)}_1&:&(u^{+}_1,u^{-}_1,v^{-}_1,w_1,w_{11},w_{10},v^{+}_1)\rightarrow (u^{+}_1,u^{-}_1,v^{-}_3,w_8,w_7,w_6,v^{+}_3),\\
R^{(11)}_2&:&(u^{+}_3,u^{-}_4,u^{+}_4,w_2,u_5^{-})\rightarrow (u^{-}_7,u^{-}_6,u^{+}_6,u_5^{+},w_2),\\
T^{(11)}R^{(11)}_2&:&(u_2^{+},u_3^{-},u_3^{+},u_4^{-})\rightarrow (u_8^{-},u_7^{-},u_7^{+},u_6^{+}),\\
{T^{(11)}}^3R^{(11)}_2&:& (w_1,v_1^{-},u_2^{-})\rightarrow (v_2^{+},w_3,u_8^{+}),\\
{T^{(11)}}^2R^{(11)}_2&:& (w_1,u_2^{-},u_3^{-},u_2^{+},v_1^{+},w_{10},w_{11})\rightarrow (v_2^{-},u_8^{-},u_7^{+},u_8^{+},w_3,w_9,w_4)\\
{T^{(11)}}^{-1}R^{(11)}_2&:& (u_4^{+},w_2,u_5^{+})\rightarrow (u_6^{-},u_5^{-},w_2),\\
{T^{(11)}}^3R^{(11)}_3&:&(w_4,v_2^{-},u_9^{-})\rightarrow (v_3^{+},w_6,u_{15}^{+}),\\
{T^{(11)}}^2R^{(11)}_3&:&(w_3,w_9,w_4,u_9^{-},u_{10}^{-}, u_9^{+},v_{2}^{+})\rightarrow (w_7,w_8,v_3^{-}, u_{15}^{-},u_{14}^{+}, u_{15}^{+} ,w_6),\\
T^{(11)}R^{(11)}_3&:&(u_9^{+},u_{10}^{-},u_{11}^{-}, u_{10}^{+})\rightarrow (u_{15}^{-},u_{14}^{-}, u_{13}^{+}, u_{14}^{+}),\\
R^{(11)}_3&:&(u_{10}^{+},u_{11}^{-}, u_{12}^{-}, w_5, u_{11}^{+})  \rightarrow (u_{14}^{-},u_{13}^{-}, w_5,u_{12}^{+}, u_{13}^{+}),\\
{T^{(11)}}^{-1}R^{(11)}_3&:&(u_{11}^{+},w_5,u_{12}^{+})\rightarrow (u_{13}^{-},u_{12}^{-},w_5),\\
T^{(11)}&:&(v_1^{-},u^{-}_2,u^{-}_3,u^{-}_4,u^{-}_5,u^{-}_6,  u^{-}_7,u^{-}_8,  v_2^{-} ,u^{-}_9,  u_{10}^{-},u_{11}^{-} ,u_{12}^{-},u_{13}^{-}, u_{14}^{-},u_{15}^{-}, v_3^{-}) \rightarrow \\
  &&(v_1^{+},u^{+}_2,u^{+}_3,u^{+}_4,u^{+}_5,u^{+}_6,  u^{+}_7,u^{+}_8, v_2^{+} ,u^{+}_9, u_{10}^{+},u_{11}^{+}, u_{12}^{+},u_{13}^{+}, u_{14}^{+},u_{15}^{+},v_3^{+}).
\end{eqnarray*}

The presentation can be obtained following from the edge cycles of the fundamental domain.

\end{proof}

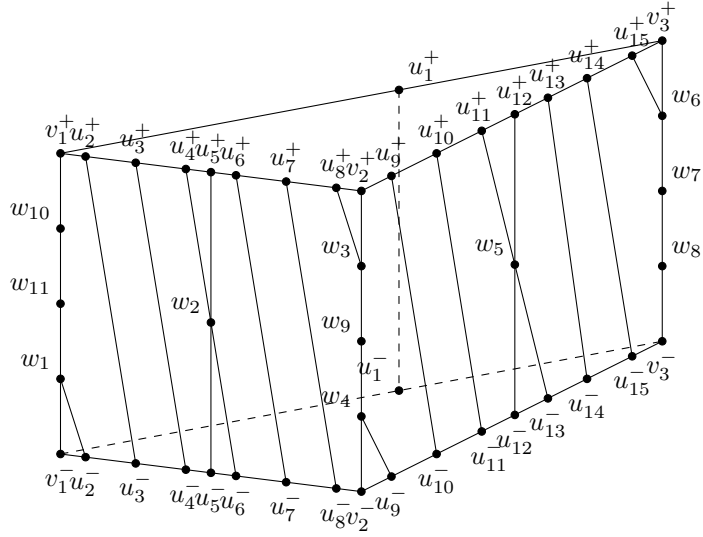
\begin{figure}
\begin{tikzpicture}[x=.5cm,y=.5cm]
\draw (0,0)--(16,3)--(8,-1)--cycle;
\draw (0,-8)--(8,-9)--(16,-5);
\draw[style=dashed] (0,-8) -- (16,-5);   
\draw (0,-8)--(0,0);  
\draw (16,-5)--(16,3);  
  \draw (8,-1)--(8,-9);    
\draw [fill] (0,0) circle [radius=0.1];
 \draw [fill] (0,-8) circle [radius=0.1];   
\draw [fill] (16,3) circle [radius=0.1];  \draw [fill] (16,-5) circle [radius=0.1]; 
 \draw [fill] (8,-1) circle [radius=0.1];   \draw [fill] (8,-9) circle [radius=0.1];
\draw [fill] (9,27/16) circle [radius=0.1];    \draw [fill] (9,27/16-8) circle [radius=0.1]; \draw[style=dashed] (9,27/16) -- (9,27/16-8); 
 
 \node[above ] at (0,0) {$v^{+}_1$};   \node[below] at (0,-8) {$v^{-}_1$};  
  \node[above ] at (8,-1) {$v^{+}_2$};   \node[below] at (8,-9) {$v^{-}_2$};   
 \node[above ] at (16,3) {$v^{+}_3$};     \node[below] at (16,-5) {$v^{-}_3$};    
  \node[above right] at (9,27/16) {$u^{+}_1$};  \node[above left] at (9,27/16-8) {$u^{-}_1$};  
  
\draw [fill] (8/12,-1/12) circle [radius=0.1];  \node[above ] at (8/12,-1/12) {$u^{+}_2$};  \draw [fill] (8/12,-1/12-8) circle [radius=0.1];\node[below] at (8/12,-1/12-8) {$u^{-}_2$};

  \draw [fill] (2,-2/8) circle [radius=0.1]; \node[above ] at (2,-2/8) {$u^{+}_3$}; \draw [fill] (2,-2/8-8) circle [radius=0.1]; \node[below ] at (2,-2/8-8) {$u^{-}_3$}; 
  
  \draw [fill] (40/12,-5/12) circle [radius=0.1];\node[above ] at (40/12,-5/12) {$u^{+}_4$};  \draw [fill] (40/12,-5/12-8) circle [radius=0.1];\node[below ] at (40/12,-5/12-8) {$u^{-}_4$}; 
  
      \draw [fill] (4,-3/6) circle [radius=0.1];  \node[above ] at (4,-3/6) {$u^{+}_5$};   \draw [fill] (4,-3/6-8) circle [radius=0.1]; \node[below ] at (4,-3/6-8) {$u^{-}_5$}; 
      
      \draw [fill] (4,-3/6-4) circle [radius=0.1]; \node[above  left] at (4,-3/6-4) {$w_2$}; 
      
  \draw [fill] (0,-6) circle [radius=0.1];          \node[above  left] at (0,-6) {$w_1$};  \draw [fill] (56/12,-7/12) circle [radius=0.1];   \node[above ] at (56/12,-7/12) {$u^{+}_6$};    \draw [fill] (56/12,-7/12-8) circle [radius=0.1];     \node[below ] at (56/12,-7/12-8) {$u^{-}_6$}; 
    
       \draw [fill] (72/12,-9/12) circle [radius=0.1];     \node[above ] at (72/12,-9/12) {$u^{+}_7$};     \draw [fill] (6,-9/12-8) circle [radius=0.1];   \node[below ] at (6,-9/12-8) {$u^{-}_7$};  

\draw [fill] (88/12,-11/12) circle [radius=0.1];     \node[above ] at (88/12,-11/12) {$u^{+}_8$};     \draw [fill] (88/12,-11/12-8) circle [radius=0.1];   \node[below ] at (88/12,-11/12-8) {$u^{-}_8$};  
      
  \draw [fill] (8,-3) circle [radius=0.1];\node[above  left] at (8,-3) {$w_3$};  
  
     \draw (0,-6)--(8/12,-1/12-8);\draw(8/12,-1/12)--(2,-2/8-8); \draw (40/12,-5/12)--(56/12,-7/12-8);  \draw(2,-2/8)--(40/12,-5/12-8); \draw(4,-3/6)--(4,-3/6-8);
     
  \draw(56/12,-7/12)--(72/12,-9/12-8);\draw(72/12,-9/12)--(88/12,-11/12-8);\draw(88/12,-11/12)--(8,-3);

   \draw [fill] (8+0.8,-1+0.4) circle [radius=0.1];      \node[above  ] at (8+0.8,-1+0.4) {$u^{+}_9$};           \draw [fill] (8+0.8,-1+0.4-8) circle [radius=0.1];  \node[below ] at (8+0.8,-1+0.4-8) {$u^{-}_9$}; 

\draw [fill] (8+2.5*0.8,-1+2.5*0.4) circle [radius=0.1];      \node[above   ] at (8+2.5*0.8,-1+2.5*0.4) {$u^{+}_{10}$};           \draw [fill] (8+2.5*0.8,-1+2.5*0.4-8) circle [radius=0.1];  \node[below ] at (8+2.5*0.8,-1+2.5*0.4-8) {$u^{-}_{10}$}; 

\draw [fill] (8+4*0.8,-1+4*0.4) circle [radius=0.1];      \node[above ] at (8+4*0.8-0.2,-1+4*0.4) {$u^{+}_{11}$};           \draw [fill] (8+4*0.8,-1+4*0.4-8) circle [radius=0.1];  \node[below ] at (8+4*0.8+0.2,-1+4*0.4-8) {$u^{-}_{11}$}; 

\draw [fill] (8+5.1*0.8,-1+5.1*0.4) circle [radius=0.1];      \node[above ] at (8+5.1*0.8,-1+5.1*0.4) {$u^{+}_{12}$};           \draw [fill] (8+5.1*0.8,-1+5.1*0.4-8) circle [radius=0.1];  \node[below ] at (8+5.1*0.8,-1+5.1*0.4-8) {$u^{-}_{12}$};

    \draw [fill] (8+6.2*0.8,-1+6.2*0.4) circle [radius=0.1];      \node[above ] at (8+6.2*0.8,-1+6.2*0.4) {$u^{+}_{13}$};           \draw [fill] (8+6.2*0.8,-1+6.2*0.4-8) circle [radius=0.1];  \node[below ] at (8+6.2*0.8,-1+6.2*0.4-8) {$u^{-}_{13}$}; 
  
\draw [fill] (8+7.5*0.8,-1+7.5*0.4) circle [radius=0.1];      \node[above ] at (8+7.5*0.8,-1+7.5*0.4) {$u^{+}_{14}$};           \draw [fill] (8+7.5*0.8,-1+7.5*0.4-8) circle [radius=0.1];  \node[below ] at (8+7.5*0.8,-1+7.5*0.4-8) {$u^{-}_{14}$}; 

\draw [fill] (8+9*0.8,-1+9*0.4) circle [radius=0.1];      \node[above ] at (8+9*0.8,-1+9*0.4) {$u^{+}_{15}$};           \draw [fill] (8+9*0.8,-1+9*0.4-8) circle [radius=0.1];  \node[below ] at (8+9*0.8,-1+9*0.4-8) {$u^{-}_{15}$};

    \draw [fill] (8,-7) circle [radius=0.1];      \node[above left] at (8,-7) {$w_{4}$};       \draw [fill] (8+5.1*0.8,-1+5.1*0.4-4) circle [radius=0.1];  \node[above left ] at (8+5.1*0.8,-1+5.1*0.4-4) {$w_{5}$};      \draw [fill] (16,3-2) circle [radius=0.1];  \node[above right ] at (16,3-2) {$w_{6}$};

\draw(8,-7)--(8+0.8,-1+0.4-8); \draw(8+0.8,-1+0.4)--(8+0.8*2.5,-1+2.5*0.4-8); \draw(8+0.8*2.5,-1+2.5*0.4)--(8+4*0.8,-1+4*0.4-8);\draw(8+5.1*0.8,-1+5.1*0.4)--(8+5.1*0.8,-1+5.1*0.4-8);

\draw(8+4*0.8,-1+4*0.4)--(8+6.2*0.8,-1+6.2*0.4-8); \draw(8+6.2*0.8,-1+6.2*0.4)--(8+7.5*0.8,-1+7.5*0.4-8);       \draw (8+7.5*0.8,-1+7.5*0.4) --(8+9*0.8,-1+9*0.4-8);

\draw (8+9*0.8,-1+9*0.4) --(16,1);

 \draw [fill] (0,-2) circle [radius=0.1];          \node[above  left] at (0,-2) {$w_{10}$};    \draw [fill] (8,-5) circle [radius=0.1];\node[above  left] at (8,-5) {$w_9$};  
    \draw [fill] (0,-4) circle [radius=0.1];          \node[above  left] at (0,-4) {$w_{11}$};    \draw [fill] (16,-1) circle [radius=0.1];  \node[above right ] at (16,-1) {$w_{7}$}; 
     \draw [fill] (16,-3) circle [radius=0.1];  \node[above right ] at (16,-3) {$w_{8}$};
    \end{tikzpicture}  
  \caption{Fundamental domain for $(\Gamma_{s}^{(11)})_{\infty}$ in Heisenberg group}
\end{figure}

\section{To find the spinal spheres that cover the prism}

In this section, we will determine the generators of the sister of Euclidean Picard groups.
Our method is based on the special feature that the orbifold $\mathbf{H}^2_\mathbb{C}/\Gamma^{(d)}_s$ has only one cusp for $d=2,7,11$.  In the previous section, we found suitable generators of the stabliser and constructed a fundamental domain. We will show that adjoining  a reflection $I^{(d)}_0$ to $(\Gamma^{(d)}_s)_\infty$ gives the generators of the sister of the Euclidean Picard modular groups $\Gamma^{(d)}_s$. As in the proof of Theorem 3.5 of \cite{fp}, it is sufficient to show  that $\langle R^{(d)}_1,R^{(d)}_2,R^{(d)}_3,T^{(d)},I^{(d)}_0\rangle $ has only one cusp.  The key step is to  show the union of the boundaries of these isometric spheres in Heisenberg group contains the fundamental domains for the stabilizers. In order to achieve this goal,  the fundamental domains for the stabilizers  will  be decomposed into several pieces as polyhedra such that each polyhedron lies inside a spinal sphere.

Given the map
$$I^{(d)}_0=\left[\begin{array}{ccc}0&0&i/\sqrt{d}\\0&1&0\\i\sqrt{d}&0&0\end{array}\right].$$
The isometric sphere $\mathcal{B}_0^{(d)}$ of $I^{(d)}_0$ is a Cygan sphere centered at  $o=(0,0,0)$ with radius $\frac{2}{\sqrt{d}}$. Observe that  $I^{(d)}_0$
maps $\mathcal{B}^{(d)}_0$ to itself and swaps the inside and the outside of $\mathcal{B}_0^{(d)}$. The boundary of $\mathcal{B}_0^{(d)}$ is a spinal sphere denoted by 
\begin{equation}\label{eq:5-1}\mathcal{S}_0^{(d)}=\left\{(\zeta,t):\left||\zeta|^2+it\right|=2/\sqrt{d}\right\}.\end{equation}

Indeed we only need to consider the boundaries of isometric spheres in Heisenberg group because two isometric spheres have a
 non-empty interior intersection if and only if the boundaries have a non-empty interior intersection.

\subsection{The proof of Theorem 5.1}In this case, it is easy to see that the six vertices of the prism $\Sigma_2$  lie outside of $\mathcal{S}_0^{(2)}$
Therefore, we need to find more isometric spheres whose boundaries together with $\mathcal{S}_0^{(2)}$ contain the prism $\Sigma_2$.

We consider the map $$I^{(2)}_0R^{(2)}_3I^{(2)}_0=\left(\begin{array}{ccc}
-1 & 0& 0\\
-2 & -1& 0\\
2 & 2& -1
\end{array}\right).$$  $\mathcal{B}^{(2)}_1$ denotes its isometric sphere which is a Cygan sphere centered at the point $(1,0)$ with radius 1. 
The boundary of $\mathcal{B}^{(2)}_1$ is given by 

\begin{equation}\label{eq:5-2}\mathcal{S}_1^{(2)}=\left\{(z,t):\left||z-1|^2+it+2i\Im z \right|=1\right\}.\end{equation}
We need to consider $\mathcal{S}_0^{(2)}$ and several images of $\mathcal{S}_1^{(2)}$ under some suitable elements in $(\Gamma^{(2)}_s)_\infty$.  
In Heisenberg coordinates these are given by 

\begin{align*}
T^{(2)}(\mathcal{S}_1^{(2)})&=\left\{(z,t):\left||z-1|^2+it-i\sqrt{2}+2i\Im z \right|=1\right\},\\
{T^{(2)}}^{-1}(\mathcal{S}_1^{(2)})&=\left\{(z,t):\left||z-1|^2+it+i\sqrt{2}+2i\Im z \right|=1\right\},\\
{T^{(2)}}^{-1}R^{(2)}_1(\mathcal{S}_1^{(2)})&=\left\{(z,t):\left||z+1|^2+it+i\sqrt{2}-2i\Im z \right|=1\right\},\\
\end{align*}

We claim that the prism $\Sigma_2$ lies inside the union of $$\mathcal{S}_0^{(2)},\mathcal{S}_1^{(2)},T^{(2)}(\mathcal{S}_1^{(2)}),{T^{(2)}}^{-1}(\mathcal{S}_1^{(2)}),{T^{(2)}}^{-1}R^{(2)}_1(\mathcal{S}_1^{(2)});$$ see
Figure 5. for viewing these spinal spheres.

\begin{figure}
\begin{center}  
\includegraphics[height=3in]{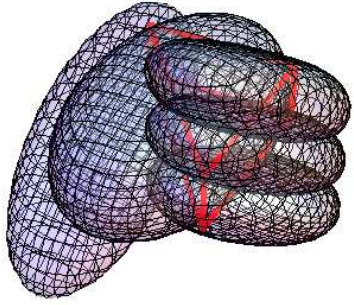}  
\caption{\small \sl The view of neighboring spinal spheres containing the fundamental domain for $(\Gamma_{s}^{(2)})_{\infty}$.}  
\end{center}  
\end{figure}

\begin{proposition}The prism $\mathbf{\Sigma}_2$ is contained in the union of the interiors of the spinal spheres  
$\mathcal{S}_0^{(2)},\mathcal{S}_1^{(2)},T^{(2)}(\mathcal{S}_1^{(2)}),{T^{(2)}}^{-1}(\mathcal{S}_1^{(2)}),{T^{(2)}}^{-1}R^{(2)}_1(\mathcal{S}_1^{(2)}).$
\end{proposition}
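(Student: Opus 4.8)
The plan is to reduce the covering statement to a finite computation by invoking the convexity of Cygan balls noted in Section~2. Working in Heisenberg coordinates $(x,y,t)$ with $z=x+iy$, the solid region bounded by each of the five spinal spheres is a Euclidean-convex body. Hence, if all vertices of a Euclidean polyhedron lie in the open ball bounded by one of these spheres, the whole polyhedron lies in that open ball. The proof then consists of (i) subdividing the prism $\Sigma_2$ into finitely many convex sub-polyhedra, (ii) assigning each piece to one of the five balls, and (iii) checking that each vertex of a piece is strictly interior to its assigned ball.

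First I would record the interior inequalities. A point $(z,t)$ lies strictly inside the Cygan ball of centre $(z_0,t_0)$ and radius $r$ exactly when $\bigl||z-z_0|^2+i(t-t_0)+2i\Im(z\bar z_0)\bigr|<r^2$. In real coordinates this is a polynomial inequality: $\mathcal{S}_0^{(2)}$ gives $|z|^4+t^2<2$, the ball $\mathcal{S}_1^{(2)}$ gives $\bigl((x-1)^2+y^2\bigr)^2+(t+2y)^2<1$, and the remaining three balls, centred at $(1,\sqrt2)$, $(1,-\sqrt2)$ and $(-1,-\sqrt2)$, are obtained from the last one by the stated vertical translations and reflection of the centre.

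Next I would subdivide $\Sigma_2$ using precisely the auxiliary points $u_i^{\pm}$ of Table~1 together with the six vertices $v_j^{\pm}$: the $u_i^{\pm}$ cut the edges of the two triangular caps, and vertical segments through them slice the prism into convex pieces. The assignment of pieces to balls follows the geometry: $\mathcal{S}_0^{(2)}$ covers the central pieces lying near the hypotenuse through the origin (the region around $u_1^{\pm},u_5^{\pm},u_6^{\pm},u_7^{\pm}$), while the four copies of $\mathcal{S}_1^{(2)}$ cover the pieces clustered near the corners $z=1$ and $z=-1$ at the appropriate heights. For each piece I would substitute the coordinates of its vertices into the corresponding inequality and confirm that the left-hand modulus is strictly below $r^2$; convexity then upgrades this to the entire piece, and the union of the pieces is all of $\Sigma_2$.

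The main obstacle is the bookkeeping rather than any individual computation. One must choose the subdivision so that the pieces genuinely tile $\Sigma_2$ with no gaps and so that every vertex of a given piece is strictly interior to its assigned ball. The difficulty concentrates where two balls overlap and a shared vertex may fall exactly on one bounding sphere; there the points $u_i^{\pm}$ must be positioned so that each such vertex is interior to the ball to which its piece is assigned. Verifying the absence of gaps and handling these borderline vertices are the steps requiring care, possibly aided by symmetries of the configuration to cut down the number of distinct cases; the remaining vertex evaluations are routine arithmetic with the explicit coordinates above.
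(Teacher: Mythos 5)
Your high-level strategy is the one the paper uses: reduce the covering statement to finitely many vertex checks via the Euclidean convexity of Cygan balls, with exactly the interior inequalities you record. But the entire content of the proposition is concentrated in your steps (i)--(ii): one must produce a decomposition whose pieces each fit inside a \emph{single} ball, and the decomposition you prescribe cannot do this. Write $B_0$, $B_1$, $TB_1$, $T^{-1}B_1$, $T^{-1}R_1B_1$ for the open regions bounded by the five spinal spheres, with Cygan centres $(0,0)$, $(1,0)$, $(1,\sqrt2)$, $(1,-\sqrt2)$, $(-1,-\sqrt2)$ respectively.

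Because every point of Table~1 sits at cap height $t=\pm\sqrt2/2$, all of your cuts are vertical planes and all of your pieces span the full height of the prism. Consider the piece containing the corner edge at $z=1-i\sqrt2/2$: the nearest available cut points on the two edges meeting at that corner are $u_2=1-i\sqrt2/4$ and $u_1=0$, so this piece contains the full-height prism over the triangle with vertices $1-i\sqrt2/2$, $1-i\sqrt2/4$, $0$, hence contains both $v^-=(1-i\sqrt2/2,-\sqrt2/2)$ and $u_2^+=(1-i\sqrt2/4,\sqrt2/2)$. Now $v^-$ lies \emph{only} in $T^{-1}B_1$ (for $B_1$ the value is $\tfrac14+(3\sqrt2/2)^2>1$, for $B_0$ it is $\tfrac94+\tfrac12>2$, and the balls centred at $(1,\sqrt2)$, $(-1,-\sqrt2)$ are farther still), while $u_2^+$ is \emph{not} in $T^{-1}B_1$ (value $\tfrac1{64}+(\sqrt2)^2>1$). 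So no ball contains this piece, and convexity gives nothing. Moreover the defect is not in your choice of points but in full-height slicing itself: the vertical segment $\{(1-i/2,\,t):|t|\le\sqrt2/2\}$ lies in the prism, its top point lies only in $B_1$ (it fails $B_0$ since $\tfrac{25}{16}+\tfrac12>2$ and fails $T^{-1}B_1$ since $\tfrac1{16}+(3\sqrt2/2-1)^2>1$), and its bottom point lies only in $T^{-1}B_1$; hence no single ball contains this segment, so \emph{any} full-height piece meeting the line $z=1-i/2$ is uncoverable by one ball. This is precisely why the paper introduces new points at intermediate heights, $p_5=(4/5-2\sqrt2\,i/5,\,2\sqrt2/5)$, $p_6=(1-3\sqrt2\,i/10,\,\sqrt2/5)$, $p_{11}=(1+i\sqrt2/2,\,-2\sqrt2/5)$, and splits each corner region into an upper polyhedron ($\mathbb{P}_4^{(2)}$, inside $\mathcal{S}_1^{(2)}$) and a lower one ($\mathbb{P}_5^{(2)}$, inside ${T^{(2)}}^{-1}(\mathcal{S}_1^{(2)})$) meeting along a slanted face: the hand-off between balls happens in the $t$-direction, in the interior of the prism. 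The Table~1 points were designed for the side-pairing combinatorics of Section~4 and are irrelevant here; supplying interior, intermediate-height subdivision points is the missing idea, not bookkeeping.
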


\begin{proof} 
It suffices to show that the prism $\Sigma_2$ can be decomposed into several pieces as polyhedra such that each polyhedron lies inside a spinal sphere which
is described in the proposition and the common face of two adjacent polyhedra lie in the intersection of the interior of two spinal spheres which contain these two polyhedra.

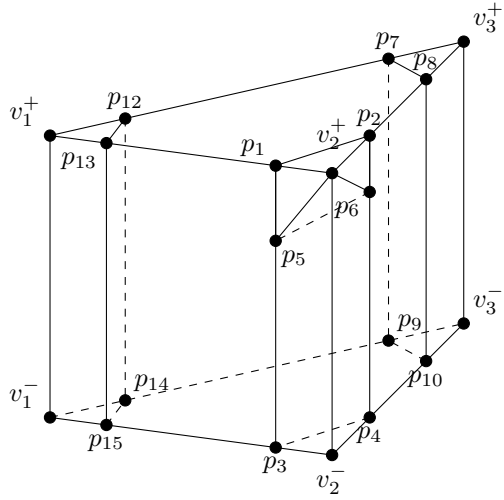
\begin{figure}
\begin{tikzpicture}[x=.5cm,y=.5cm]
\draw (0,0)--(11,2.5)--(7.5,-1)--cycle;
\draw (0,-7.5)--(7.5,-8.5)--(11,-5);
\draw[style=dashed] (0,-7.5) -- (11,-5);   
\draw (0,-7.5)--(0,0);  
\draw (11,-5)--(11,2.5);  
  \draw (7.5,-8.5)--(7.5,-1);    
\draw [fill] (0,0) circle [radius=0.15];
 \draw [fill] (0,-7.5) circle [radius=0.15];   
\draw [fill] (11,2.5) circle [radius=0.15];  \draw [fill] (11,-5) circle [radius=0.15]; 
 \draw [fill] (7.5,-1) circle [radius=0.15];   \draw [fill] (7.5,-8.5) circle [radius=0.15];

\draw [fill] (2,10/22) circle [radius=0.15]; \draw [fill] (2,10/22-7.5) circle [radius=0.15];  \draw[style=dashed] (2,10/22) -- (2,10/22-7.5);  
\draw [fill] (1.5,-3/15) circle [radius=0.15]; \draw [fill] (1.5,-3/15-7.5) circle [radius=0.15];  \draw (1.5,-3/15) -- (1.5,-3/15-7.5);  
  \draw[style=dashed] (1.5,-3/15-7.5) -- (2,10/22-7.5);   \draw (2,10/22) -- (1.5,-3/15);   
 \node[above ] at (2,10/22) {$p_{12}$};    \node[above right] at (2,10/22-7.5) {$p_{14}$};    
   \node[below left] at (1.5,-3/15) {$p_{13}$};  \node[below ] at (1.5,-3/15-7.5) {$p_{15}$};

\draw [fill] (9,45/22) circle [radius=0.15]; \draw [fill] (9,45/22-7.5) circle [radius=0.15];   \draw[style=dashed] (9,45/22) -- (9,45/22-7.5);    
\draw [fill] (10,1.5) circle [radius=0.15]; \draw [fill] (10,1.5-7.5) circle [radius=0.15];   \draw(10,1.5) -- (10,1.5-7.5);      
  \draw(10,1.5) -- (9,45/22);   \draw[style=dashed] (10,1.5-7.5) -- (9,45/22-7.5);   
  \node[above] at (9,45/22) {$p_{7}$};    \node[above right] at (9,45/22-7.5) {$p_{9}$};   
    \node[above ] at (10,1.5) {$p_{8}$};    \node[below] at (10,1.5-7.5) {$p_{10}$};    
  
\draw [fill] (6,-12/15) circle [radius=0.15]; \draw [fill] (6,-12/15-7.5) circle [radius=0.15];  \draw (6,-12/15) -- (6,-12/15-7.5);    
\draw [fill] (8.5,0) circle [radius=0.15]; \draw [fill] (8.5,0-7.5) circle [radius=0.15];  \draw (8.5,0) -- (8.5,0-7.5);    
    \draw(8.5,0) -- (6,-12/15);   \draw[style=dashed] (8.5,0-7.5) -- (6,-12/15-7.5);   
  \node[above left] at (6,-12/15) {$p_{1}$};     \node[below] at (6,-12/15-7.5) {$p_{3}$};  
     \node[above ] at (8.5,0) {$p_{2}$};   \node[below] at (8.5,0-7.5) {$p_{4}$};

 \draw [fill] (6,-12/15-2) circle [radius=0.15];   \draw (6,-12/15) -- (6,-12/15-2);  
  \draw [fill] (8.5,0-1.5) circle [radius=0.15];  \draw (8.5,0) -- (8.5,0-1.5);  
  \draw[style=dashed] (8.5,0-1.5) -- (6,-12/15-2);  
   \draw (7.5,-1) -- (6,-12/15-2);    \draw (7.5,-1) -- (8.5,0-1.5);  
  \node[below left] at (8.5,0-1.5) {$p_{6}$};  \node[below] at (6.5,-12/15-2) {$p_{5}$};     
   
 \node[above left] at (0,0) {$v^{+}_1$};   \node[above left] at (0,-7.5) {$v^{-}_1$};  
  \node[above ] at (7.5,-0.6) {$v^{+}_2$};   \node[below] at (7.5,-8.5) {$v^{-}_2$};   
 \node[above right] at (11,2.5) {$v^{+}_3$};     \node[above right] at (11,-5) {$v^{-}_3$};  
    
  \end{tikzpicture}  
  \caption{The decomposition of the fundamental domain for$(\Gamma_{s}^{(2)})_{\infty}$ into   several polyhedra. }
  
  \end{figure}

We need to add fifteen points on the faces of the prism $\Sigma_2$ in order to decompose the prism into five polyhedra, in Heisenberg coordinates. See Figure 6 for the decomposition of the prism. These are given by

$$
\begin{array}{llll}
p_1=(4/5-i2\sqrt{2}/5,\sqrt{2}/2),&p_2=(1-i3\sqrt{2}/10,\sqrt{2}/2),\\
p_3=(4/5-i2\sqrt{2}/5,-\sqrt{2}/2),&p_4=(1-i3\sqrt{2}/10,-\sqrt{2}/2),\\
p_5=(4/5-i2\sqrt{2}/5,2\sqrt{2}/5),&p_6=(1-i3\sqrt{2}/10,\sqrt{2}/5),\\
p_7=(4/5+i\sqrt{2}/2,\sqrt{2}/2),&p_8=(1+i3\sqrt{2}/10,-\sqrt{2}/2),\\
p_9=(4/5+i\sqrt{2}/2,-\sqrt{2}/2),&p_{10}=(1+i3\sqrt{2}/10,-\sqrt{2}/2),\\
p_{11}=(1+i\sqrt{2}/2,-2\sqrt{2}/5),&p_{12}=(-4/5+i\sqrt{2}/2,\sqrt{2}/2),\\
p_{13}=(-9/10+i9\sqrt{2}/20,\sqrt{2}/2),&p_{14}=(-4/5+i\sqrt{2}/2,-\sqrt{2}/2),\\
p_{15}=(-9/10+i9\sqrt{2}/20,-\sqrt{2}/2).
\end{array}$$

We verify the location of all these points as follows

$\bullet$\  the points $p_1, p_2$ are in the intersection of the interiors of $\mathcal{S}_0^{(2)}$ and $\mathcal{S}_1^{(2)}$;

$\bullet$\  the points $p_5, p_6$ are in the intersection of the interiors of $\mathcal{S}_0^{(2)},\mathcal{S}_1^{(2)}$ and ${T^{(2)}}^{-1}(\mathcal{S}_1^{(2)})$;

$\bullet$\  the points $p_3, p_4$ are in the intersection of the interiors of $\mathcal{S}_0^{(2)}$ and ${T^{(2)}}^{-1}(\mathcal{S}_1^{(2)})$;

$\bullet$\  the point $v_2^{+}$ is in the intersection of the interiors of $\mathcal{S}_1^{(2)}$ and ${T^{(2)}}^{-1}(\mathcal{S}_1^{(2)})$;

$\bullet$\  the point $v_2^{-}$ is in  the interior of ${T^{(2)}}^{-1}(\mathcal{S}_1^{(2)})$;

$\bullet$\  the point $v_3^{+}$ is in  the interior of $T^{(2)}(\mathcal{S}_1^{(2)})$;

$\bullet$\  the point $v_3^{-}$ is in the intersection of the interiors of $T^{(2)}(\mathcal{S}_1^{(2)})$ and $\mathcal{S}_1^{(2)}$;

$\bullet$\  the points $p_7,p_8$ are in the intersection of the interiors of $\mathcal{S}_0^{(2)}$ and $T^{(2)}(\mathcal{S}_1^{(2)})$;

$\bullet$\  the points $p_9,p_{10}$ are in the intersection of the interiors of $\mathcal{S}_0^{(2)}, T^{(2)}(\mathcal{S}_1^{(2)})$ and $\mathcal{S}_1^{(2)}$;

$\bullet$\  the point $p_{11}$ is in the intersection of the interiors of $\mathcal{S}_1^{(2)}$ and $T^{(2)}(\mathcal{S}_1^{(2)})$;

$\bullet$\  the points $v_1^{-},v_1^{+}$ are in  the interiors of ${T^{(2)}}^{-1}R^{(2)}_1(\mathcal{S}_1^{(2)})$;

$\bullet$\  the points $p_{12},p_{13},p_{14},p_{15}$ are in the intersection of the interiors of $\mathcal{S}_0^{(2)}$ and ${T^{(2)}}^{-1}R^{(2)}_1(\mathcal{S}_1^{(2)})$.

We describe these polyhedra as follows

$\bullet$\ the first polyhedra $\mathbb{P}_1^{(2)}$ with vertices $v_1^{+},p_{12}, p_{13}, v_1^{-},p_{14},p_{15}$;

$\bullet$\ the second polyhedra $\mathbb{P}_2^{(2)}$ with vertices $v_3^{+},p_{7}, p_{8}, v_3^{-},p_{9},p_{10}$;

$\bullet$\ the third polyhedra $\mathbb{P}_3^{(2)}$ with vertices $p_{12}, p_{13}, p_{2},p_{8},p_7,p_{14},p_{15},p_{3},p_{4},p_{10},p_9$;

$\bullet$\ the forth polyhedra $\mathbb{P}_4^{(2)}$ with vertices $v_2^{+},p_1,p_2,p_6,p_5$;

$\bullet$\ the fifth polyhedra $\mathbb{P}_5^{(2)}$ with vertices  $v_2^{+},p_5,p_6,v_2^{-},p_3,p_4$.

As the spinal sphere is convex,  we conclude that the polyhedra $\mathbb{P}_1^{(2)}$ is inside the spinal sphere ${T^{(2)}}^{-1}R^{(2)}_1(\mathcal{S}_1^{(2)})$;
the polyhedra $\mathbb{P}_2^{(2)}$ is inside the spinal sphere $T^{(2)}(\mathcal{S}_1^{(2)})$; the polyhedra $\mathbb{P}_3^{(2)}$ is inside the spinal sphere $\mathcal{S}_0^{(2)}$;
the polyhedra $\mathbb{P}_4^{(2)}$ is inside the spinal sphere$\mathcal{S}_1^{(2)}$; the polyhedra $\mathbb{P}_5^{(2)}$ is inside the spinal sphere ${T^{(2)}}^{-1}(\mathcal{S}_1^{(2)})$.
\end{proof}

\subsection{The proof of Theorem 1.2}

As in the case of $\Sigma_2$, the fundamental domain $\Sigma_7$ for the stabilizer  $(\Gamma_{s}^{(7)})_{\infty}$ also can not be contained inside $\mathcal{S}_0^{(7)}$ completely. Therefore we consider the maps

$$Q_1=I^{(7)}_0R^{(7)}_3I^{(7)}_0=\left(\begin{array}{ccc}
-1 & 0& 1\\
-7/2+\sqrt{7}i/2 & -1& 0\\
7 & 7/2+\sqrt{7}i/2& -1
\end{array}\right), $$ and $$ Q_2={R^{(7)}_2}^{-1}I^{(7)}_0R^{(7)}_2=\left(\begin{array}{ccc}
1/2-\sqrt{7}i/2& -1/2-\sqrt{7}i/2& 1+3i/\sqrt{7}\\
i\sqrt{7} & 1+i\sqrt{7}& -3/2-\sqrt{7}i/2\\
i\sqrt{7} & i\sqrt{7}& -1/2-\sqrt{7}i/2
\end{array}\right).$$

Consider the isometric spheres of $Q_1$ and $Q_2$, which are denoted  by $\mathcal{B}_1^{(7)}$ and $\mathcal{B}_2^{(7)}$, respectively. The center of 
$\mathcal{B}_1^{(7)}$ is $Q_1^{-1}(\infty)$, which is point with horospherical coordinate $(1/2,-1/2\sqrt{7},0)$ and the centre of  $\mathcal{B}_2^{(7)}$, is $Q_2^{-1}(\infty)$ which has horospherical coordinate
$(1,0,-1/\sqrt{7})$.  The  Cygan radius  of $\mathcal{B}_1^{(7)}$ and $\mathcal{B}_2^{(7)}$  are $\sqrt{2}/\sqrt{7}$ and  $\sqrt{2}/\sqrt[4]{7}$  respectively. The boundaries of these isometric spheres $\mathcal{B}_1^{(7)}$ and $\mathcal{B}_2^{(7)}$ are in Heisenberg coordinates given by 

\begin{align*}
\mathcal{S}_1^{(7)}&=\left\{(z,t):\left||z+i\omega_7/\sqrt{7}|^2+it+2i\Im (iz\overline{\omega_7}/\sqrt{7})\right|=2/7\right\},\\
\mathcal{S}_2^{(7)}&=\left\{(z,t):\left||z-1|^2+it+i/\sqrt{7}+2i\Im z \right|=2/\sqrt{7}\right\}
\end{align*}

In order to cover the prism $\Sigma_7$, we consider $\mathcal{S}_0^{(7)}$ and the images of $\mathcal{S}_1^{(7)} $  and  $\mathcal{S}_2^{(7)}$ under some elements in
$(\Gamma_{s}^{(7)})_{\infty}$. These spinal spheres are points with Heisenberg coordinates given by

\begin{align*}
T^{(7)}R^{(7)}_2(\mathcal{S}_1^{(7)})&=\left\{(z,t):\left||z-i\overline{\omega_7}/\sqrt{7}|^2+it-i2/\sqrt{7}+2i\Im (-iz\omega_7/\sqrt{7}) \right|=2/7\right\},\\
{T^{(7)}}^{2}(\mathcal{S}_2^{(7)})&=\left\{(z,t):\left||z-1|^2+it-i3/\sqrt{7}+2i\Im z \right|=2/\sqrt{7}\right\},\\
{T^{(2)}}^{3}(\mathcal{S}_2^{(7)})&=\left\{(z,t):\left||z-1|^2+it-i5/\sqrt{7}-2i\Im z \right|=2/\sqrt{7}\right\}.
\end{align*}

We claim that the prism $\Sigma_7$ lies inside the union of $$\mathcal{S}_0^{(7)},\mathcal{S}_1^{(7)},\mathcal{S}_2^{(7)}, T^{(7)}R^{(7)}_2(\mathcal{S}_1^{(7)}),{T^{(7)}}^{2}(\mathcal{S}_2^{(7)}),{T^{(7)}}^{3}(\mathcal{S}_2^{(7)}).$$ See
Figure 7 for viewing these spinal spheres.

\begin{figure}
\begin{center}  
\includegraphics[height=3in]{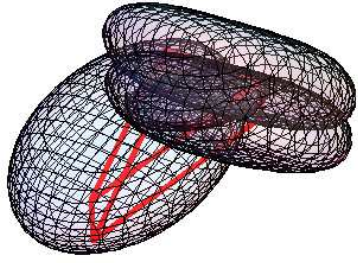}  
\caption{\small \sl The view of neighboring spinal spheres containing the fundamental domain for $(\Gamma_{s}^{(7)})_{\infty}$}  
\end{center}  
\end{figure}

\begin{proposition}The prism $\mathbf{\Sigma}_7$ is contained in the union of the interiors of the spinal spheres  
$\mathcal{S}_0^{(7)},\mathcal{S}_1^{(7)},\mathcal{S}_2^{(7)}, T^{(7)}R^{(7)}_2(\mathcal{S}_1^{(7)}),{T^{(7)}}^{2}(\mathcal{S}_2^{(7)}),{T^{(7)}}^{3}(\mathcal{S}_2^{(7)})$.
\end{proposition}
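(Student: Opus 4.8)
The plan is to follow verbatim the strategy used for Proposition 5.1 in the $d=2$ case. Since every spinal sphere is the boundary of a Cygan sphere and Cygan spheres are convex, it suffices to decompose the prism $\Sigma_7$ into finitely many convex polyhedra, each of whose vertices all lie in the interior of a single one of the six spinal spheres in the list; convexity then forces the whole polyhedron to lie inside that sphere. For the union of the assigned spheres to cover $\Sigma_7$, the decomposition must be arranged so that whenever two polyhedra are adjacent, their common face lies in the intersection of the interiors of the two spheres assigned to them, so that no gap opens along an interior face.

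First I would record the explicit membership test. A boundary point $(z,t,0)$ of the Heisenberg group lies in the interior of the Cygan sphere of radius $r$ with centre $(z_0,t_0,0)$ exactly when
$$\left||z-z_0|^2 + it - it_0 + 2i\Im m(z\bar z_0)\right| < r^2.$$
Using the centres and radii already computed for $\mathcal{S}_0^{(7)}, \mathcal{S}_1^{(7)}, \mathcal{S}_2^{(7)}$ and for the translates $T^{(7)}R^{(7)}_2(\mathcal{S}_1^{(7)})$, ${T^{(7)}}^{2}(\mathcal{S}_2^{(7)})$, ${T^{(7)}}^{3}(\mathcal{S}_2^{(7)})$, this turns every location check into the evaluation of a single complex modulus and a comparison with $r^2$.

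Next I would introduce a family of auxiliary points on the faces of $\Sigma_7$, in the same spirit as the fifteen points $p_1,\dots,p_{15}$ used for $\Sigma_2$, chosen so that the six vertices $v_i^{\pm}$ together with these auxiliary points partition the prism into convex cells. For each cell I would name its vertices and, invoking the criterion above, verify that they all lie inside one designated spinal sphere. In parallel I would check that each shared face of two neighbouring cells has all its vertices inside both of the assigned spheres, so that the face sits in the intersection of the two interiors. Convexity of spinal spheres then promotes these vertex-level statements to the corresponding statements about full cells and full faces, and the proposition follows once every cell has been assigned a covering sphere.

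The hard part will be combinatorial rather than conceptual. The prism $\Sigma_7$ is geometrically more intricate than $\Sigma_2$ — it already required the numerous auxiliary points $u_i^{\pm}$ and $w_j$ of Table 2 — and one must now coordinate six spinal spheres instead of five. The delicate step is therefore to find a single convex decomposition that is simultaneously compatible with all six spheres: the auxiliary points must be placed so that each lands in exactly the right double or triple intersection of interiors, and so that the region near the upper and lower caps $u_i^{\pm}$, where several spheres overlap, is covered without any straddling cell. Once such a set of auxiliary points is fixed, the remaining verifications are routine evaluations of the modulus inequality above.
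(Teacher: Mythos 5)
Your strategy coincides exactly with the paper's own proof: add auxiliary points on the faces of $\Sigma_7$, cut the prism into polyhedra spanned by these points and the vertices $v_i^{\pm}$, verify each point's location by the Cygan-distance inequality, and use convexity of Cygan spheres to promote vertex membership to membership of the whole cell. So there is no methodological divergence. But as a proof your proposal has a genuine gap, and you have named it yourself: everything hinges on actually exhibiting the decomposition, and you never do. The paper's proof consists almost entirely of this data --- the explicit auxiliary points $p_1,\dots,p_{11}$ in Heisenberg coordinates, the eight cells $\mathbb{P}_1^{(7)},\dots,\mathbb{P}_8^{(7)}$, the verified list of which double and triple intersections of interiors each point lies in, and the resulting assignment of each cell to one of the six spheres (note that several spheres receive more than one cell: $\mathbb{P}_2^{(7)},\mathbb{P}_4^{(7)},\mathbb{P}_6^{(7)}$ all go inside ${T^{(7)}}^{2}(\mathcal{S}_2^{(7)})$, and $\mathbb{P}_1^{(7)},\mathbb{P}_7^{(7)}$ inside $\mathcal{S}_0^{(7)}$). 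Deferring this as ``routine'' leaves the proposition unproved: what is being claimed is precisely that such a compatible decomposition exists, and existence is not automatic --- indeed, for $\Sigma_{11}$ the analogous attempt with plain convex cells fails, and the paper has to pass to star-convex unions of several spinal spheres. Nothing in your outline rules out that the same obstruction occurs already for $\Sigma_7$.

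A minor remark: the face-matching condition you impose (each shared face of two adjacent cells must lie in the intersection of the interiors of their two assigned spheres) is automatically satisfied once each closed cell lies in the interior of its own sphere, since a common face belongs to both cells; so it adds no real constraint. The paper states the same condition in the $d=2$ case, but the only work that actually needs to be done is the vertex-by-vertex verification together with the explicit list of cells, which is what your proposal is missing.
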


\begin{proof}

We need to add sixteen points on the faces of the prism $\mathbf{\Sigma}_7$ in order to decompose the prism into seven polyhedra, in Heisenberg coordinates, these are given by
$$
\begin{array}{llll}
p_1=(0.67,0.17\times\sqrt{17},\sqrt{7}/7),&p_2=(0.46,\sqrt{7}/4,\sqrt{7}/7),\\
p_3=(0.67,0.17\times\sqrt{17},-\sqrt{7}/7),&p_4=(0.46,\sqrt{7}/4,\sqrt{7}/7),\\
p_5=(3/4,\sqrt{7}/4,\sqrt{7}/8),&p_6=(0.46,\sqrt{7}/4,0),\\
p_7=(0.55,\sqrt{7}/4,\sqrt{7}/20),&p_8=(0.55,\sqrt{7}/4,\sqrt{7}/7),\\
p_9=(0.6,0.2\times\sqrt{7},\sqrt{7}/7),&p_{10}=(0.65,\sqrt{7}/4,7\sqrt{7}/80),\\
p_{11}=(0.6,0.2\times\sqrt{7},-\sqrt{7}/7).
\end{array}$$

We verify the location of all these points as follows

$\bullet$\  the points $p_1,p_3,p_4,p_5,p_6,p_9,p_{11},v_2^{-}$ are in the intersection of the interiors of $\mathcal{S}_0^{(7)}$ and ${T^{(7)}}^{2}(\mathcal{S}_2^{(7)})$;

$\bullet$\  the points $p_1,p_5,p_9,v_2^{+}$ are in  the interior of ${T^{(7)}}^{3}(\mathcal{S}_2^{(7)})$;

$\bullet$\ the point $p_9$ is in  the interior of $T^{(7)}R^{(7)}_2(\mathcal{S}_1^{(7)})$;

$\bullet$\  the point $p_2$ is in the intersection of the interiors of $\mathcal{S}_0^{(7)}$ and $T^{(7)}R^{(7)}_2(\mathcal{S}_1^{(7)})$;

$\bullet$\  the point $p_8$ is in the intersection of the interiors of ${T^{(7)}}^{3}(\mathcal{S}_2^{(7)})$ and $T^{(7)}R^{(7)}_2(\mathcal{S}_1^{(7)})$;

$\bullet$\  the point $p_{10}$ is in the intersection of the interiors of $T^{(7)}R^{(7)}_2(\mathcal{S}_1^{(7)})$, ${T^{(7)}}^{2}(\mathcal{S}_2^{(7)})$ and ${T^{(7)}}^{3}(\mathcal{S}_2^{(7)})$;

$\bullet$\  the point $p_7$ is in the intersection of the interiors of $T^{(7)}R^{(7)}_2(\mathcal{S}_1^{(7)})$, ${T^{(7)}}^{2}(\mathcal{S}_2^{(7)})$ and $\mathcal{S}_0^{(7)}$.

We describe these polyhedra as follows:

 $\bullet$\ the  polyhedra $\mathbb{P}_1^{(7)}$ with vertices  $v^+_1,p_1,p_9,p_2,v_3^{+}, v_1^{-},p_3,p_{11},p_4,v_3^{-}$;

$\bullet$\ the  polyhedra $\mathbb{P}_2^{(7)}$ with vertices  $p_1,p_5,p_9,p_3,v_2^{-},p_{11}$;

$\bullet$\ the  polyhedra $\mathbb{P}_3^{(7)}$ with vertices  $p_1,p_5,v_2^{+},p_{9}$;

$\bullet$\ the  polyhedra $\mathbb{P}_4^{(7)}$ with vertices  $p_5,p_9,p_6,v_2^{-},p_{11},p_4$;

$\bullet$\ the  polyhedra $\mathbb{P}_5^{(7)}$ with vertices  $p_9,p_5,,p_{10},p_8, v_2^{+}$;

$\bullet$\ the  polyhedra $\mathbb{P}_6^{(7)}$ with vertices  $p_9,p_5,p_{10},p_7, p_6$;

$\bullet$\ the  polyhedra $\mathbb{P}_7^{(7)}$ with vertices  $p_9,p_2,p_7, p_6$;

$\bullet$\ the  polyhedra $\mathbb{P}_8^{(7)}$ with vertices  $p_9,p_{10},p_7, p_2,p_8$.

By examining the location of the points and applying the spinal sphere is convex, we conclude that the polyhedra $\mathbb{P}_1^{(7)}$ is inside the spinal sphere
$\mathcal{S}_0^{(7)}$; the polyhedra $\mathbb{P}_2^{(7)}$ is inside the spinal sphere ${T^{(7)}}^{2}(\mathcal{S}_2^{(7)})$; the polyhedra $\mathbb{P}_3^{(7)}$ is inside the spinal sphere ${T^{(7)}}^{3}(\mathcal{S}_2^{(7)})$;
the polyhedra $\mathbb{P}_4^{(7)}$ is inside the spinal sphere ${T^{(7)}}^{2}(\mathcal{S}_2^{(7)})$; the polyhedra $\mathbb{P}_5^{(7)}$ is inside the spinal sphere ${T^{(7)}}^{3}(\mathcal{S}_2^{(7)})$;
the polyhedra $\mathbb{P}_6^{(7)}$ is inside the spinal sphere ${T^{(7)}}^{2}(\mathcal{S}_2^{(7)})$; the polyhedra $\mathbb{P}_7^{(7)}$ is inside the spinal sphere $\mathcal{S}_0^{(7)}$;
 the polyhedra $\mathbb{P}_8^{(7)}$ is inside the spinal sphere $T^{(7)}R^{(7)}_2(\mathcal{S}_1^{(7)})$.

\end{proof}

\begin{figure}
\begin{tikzpicture}[x=.5cm,y=.5cm]
\draw (0,0)--(15,3)--(8,-2)--cycle;
\draw (0,-10)--(8,-12)--(15,-7);
\draw[style=dashed] (0,-10) -- (15,-7);   
\draw (0,-10)--(0,0);  
\draw (8,-2)--(8,-12);  
 \draw (15,3)--(15,-7);    
\draw [fill] (0,0) circle [radius=0.15];
 \draw [fill] (0,-10) circle [radius=0.15];   
\draw [fill] (15,3) circle [radius=0.15];  \draw [fill] (15,-7) circle [radius=0.15]; 
 \draw [fill] (8,-2) circle [radius=0.15];   \draw [fill] (8,-12) circle [radius=0.15];
   
 \node[above left] at (0,0) {$v^{+}_1$};   \node[above left] at (0,-10) {$v^{-}_1$};  
  \node[above ] at (8.2,-1.9) {$v^{+}_2$};   \node[below] at (8,-12) {$v^{-}_2$};   
 \node[above right] at (15,3) {$v^{+}_3$};     \node[above right] at (15,-7) {$v^{-}_3$};

 \draw [fill] (4,-1) circle [radius=0.15];  \node[above left] at (4,-1) {$p_1$};\draw [fill] (4,-11) circle [radius=0.15]; \node[above left] at (4,-11) {$p_3$};
  \draw [fill] (11.5,0.5) circle [radius=0.15]; \node[above left] at (11.5,0.5) {$p_2$};\draw [fill] (11.5,-9.5) circle [radius=0.15];  \node[above left] at (11.5,-9.5) {$p_4$}; 
  
 \draw [fill] (8,-4.5) circle [radius=0.15];  \node[below right] at (8,-4.5) {$p_5$}; \draw [fill] (11.5,0.5-6) circle [radius=0.15];  \node[below right] at (11.5,0.5-6) {$p_6$}; 
 \draw [fill] (9.75,-0.75) circle [radius=0.15]; \node[above left] at (9.75,-0.75) {$p_8$}; 
 
  \draw [fill] (7.3,-0.6) circle [radius=0.15]; \node[above left] at (7.25,-0.3) {$p_{9}$}; 
   \draw [fill] (7.3,-0.6-10) circle [radius=0.15]; \node[above left] at (7.25,-0.3-10) {$p_{11}$}; 
      
  \draw [fill] (10.5,-0.75-2.7) circle [radius=0.15]; \node[above right] at (10.5,-0.75-2.7) {$p_7$};  
 
   \draw [fill] (9.25,-3.975) circle [radius=0.15]; \node[below] at (9.25,-3.975) {$p_{10}$};  
   
   \draw(4,-1)--(4,-11);
\draw(11.5,0.5)--(11.5,-9.5);        
 \draw(4,-1)--(8,-4.5);      
 
\draw(11.5,0.5)--(10.5,-0.75-2.7);\draw(11.5,0.5)--(11.5,0.5-6); 

\draw(11.5,-5.5)--(10.5,-0.75-2.7);   \draw(9.25,-3.975)--(10.5,-0.75-2.7);    

 \draw(9.25,-3.975)--(9.75,-0.75);   \draw(9.25,-3.975)--(8,-4.5); 
  \draw(8,-4.5)--(11.5,-5.5);

\draw[style=dashed](4,-1-10)--(7.3,-0.6-10);\draw[style=dashed](7.3,-0.6-10)--(11.5,0.5-10);\draw[style=dashed](7.3,-0.6-10)--(8,-2-10);
\draw(4,-1)--(7.3,-0.6);\draw(7.3,-0.6)--(11.5,0.5);\draw(7.3,-0.6)--(8,-2);\draw(7.3,-0.6)--(9.75,-0.75);
\draw[style=dashed](7.3,-0.6)--(7.3,-0.6-10);
\draw[style=dashed](7.3,-0.6)--(8,-4.5); 
 \draw[style=dashed](7.3,-0.6)--(9.25,-3.975);
\draw[style=dashed](7.3,-0.6)--(10.5,-0.75-2.7);   
\draw[style=dashed](7.3,-0.6)--(11.5,0.5-6);      
   \end{tikzpicture}  
  \caption{The decomposition of the fundamental domain for $(\Gamma_{s}^{(7)})_{\infty}$ in Heisenberg group}  
  \end{figure}
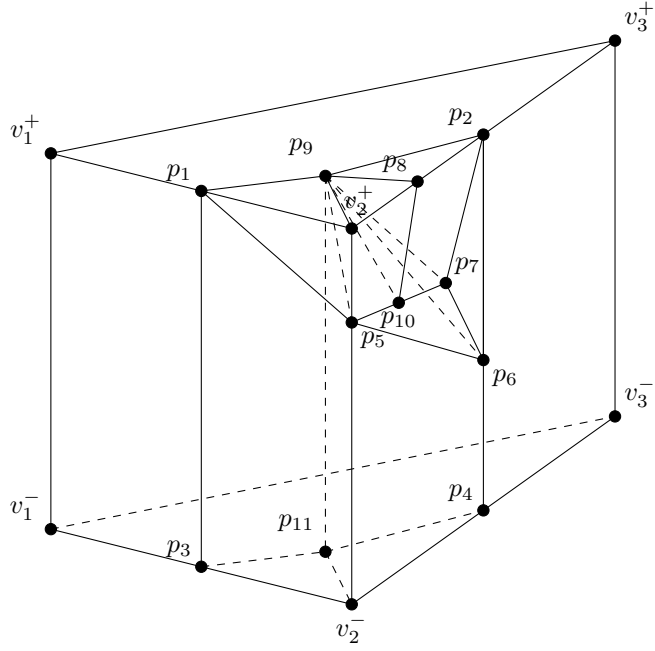

\subsection{The case $\mathcal{O}_{11}$}

In this case, it is also easy to know that the fundamental domain for the stabilizer $(\Gamma_{s}^{(11)})_{\infty}$ can not be inside
$\mathcal{S}_0^{(11)}$ completely. Moreover, the intersection of the intersection of the spinal spheres is more complicate than the above two cases.
So it is difficult to decompose $\Sigma_{11}$ into several polyhedral as before. We will apply the similar method in \cite{z2}. 
$\Sigma_{11}$ will be decomposed into several polyhedra cells. Observe that a union of intersection of several spinal spheres is a star-convex set if they
have a non-empty interior intersection. Thus we show that the collection of some spinal spheres can be separated into several parts such that
each part contains certain polyhedral cell. All of these polyhedral cells are defined by the cone-polygon as its boundary. The concept of cone-polygon was introduced in \cite{z2}.

First we consider the maps

$$A_1=(I^{(11)}_0R^{(11)}_2I^{(11)}_0)^{-1}\left(\begin{array}{ccc}
-1 & 0& 0\\
-i\sqrt{11}& -1& 0\\
11/2+i\sqrt{11}/2 & -\sqrt{11}i& -1
\end{array}\right)$$  and $$ A_2=({R^{(11)}_2}I^{(11)}_0)^2=\left(\begin{array}{ccc}
-7/2+3\sqrt{11}i/2 & -3/2-\sqrt{11}i/2& 1/2-i/2\sqrt{11}\\
11/2-3\sqrt{11}i/2 & 1+\sqrt{11}i& -1\\
11/2-\sqrt{11}i/2 & \sqrt{11}i& -1
\end{array}\right).$$

The boundaries of the isometric spheres of $A_1, A_2$ are in Heisenberg coordinates given by 

\begin{align*}
\mathcal{S}_1^{(11)}&=\left\{(z,t):\left||z+\overline{\omega_{11}}/3|^2+it+i/3\sqrt{11}+2i\Im (-z\omega_{11}/3)\right|=\frac{2}{\sqrt{33}}\right\},\\
\mathcal{S}_2^{(11)}&=\left\{(z,t):\left||z+\omega_{11}/3|^2+it-i/3\sqrt{11}+2i\Im (-z\overline{\omega_{11}}/3)\right|=\frac{2}{\sqrt{33}}\right\}.
\end{align*}

In order to determine a union of the spinal spheres which covers the prism $\Sigma_{11}$,  we need to consider $\mathcal{S}_0^{(11)}$ and the images of $\mathcal{S}_1^{(11)} $  and  $\mathcal{S}_2^{(11)}$ under some elements in
$(\Gamma_{s}^{(11)})_{\infty}$. We will show that the prism $\Sigma_{11}$ lies inside the union of ten spinal spheres. See
Figure 9 for viewing these spinal spheres.

\begin{proposition}The prism  $\Sigma_{11}$ is contained in the union of the interiors of the spinal spheres  
\begin{align*}
&\mathcal{S}_0^{(11)},\mathcal{S}_1^{(11)},T^{(11)}(\mathcal{S}_1^{(11)}),R^{(11)}_1T^{(11)}(\mathcal{S}_1^{(11)}), {T^{(11)}}^{3}R^{(11)}_2R^{(11)}_1(\mathcal{S}_1^{(11)})  \\
&R^{(11)}_1(\mathcal{S}_1^{(11)}),{T^{(11)}}^{2}R^{(11)}_2R^{(11)}_1(\mathcal{S}_1^{(11)}), T^{(11)}R^{(11)}_2R^{(11)}_1(\mathcal{S}_1^{(11)}),\\
&(R^{(11)}_1R^{(11)}_3)^{-1}(\mathcal{S}_2^{(11)}),T^{(11)}(R^{(11)}_1R^{(11)}_3)^{-1}(\mathcal{S}_2^{(11)}).
\end{align*}
\end{proposition}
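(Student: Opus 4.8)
The plan is to follow the same strategy that worked for $\Sigma_2$ and $\Sigma_7$, namely to partition the prism into polyhedral cells and cover each cell by a subcollection of the ten listed spinal spheres, but now to replace the naive appeal to convexity of a single sphere by the star-convexity of suitable \emph{unions} of spheres. This substitution is forced by the feature noted just above the statement: in the present case the overlap pattern of the spheres is too intricate for any single convex Cygan sphere to contain a natural polyhedral cell of the decomposition.

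First I would record the ten spinal spheres explicitly in Heisenberg coordinates, computing each as the image of $\mathcal{S}_1^{(11)}$ or $\mathcal{S}_2^{(11)}$ under the indicated word in $R^{(11)}_1, R^{(11)}_2, R^{(11)}_3$ and $T^{(11)}$. This uses only the action formulas for these generators recorded in the description of $(\Gamma^{(11)}_s)_\infty$, and in particular yields the center and Cygan radius of each sphere. Next I would introduce a finite set of auxiliary points on the faces and edges of $\Sigma_{11}$ and use them to cut $\Sigma_{11}$ into polyhedral cells, each cell being bounded by a cone-polygon in the sense of \cite{z2}.

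The heart of the argument is then a cell-by-cell verification. For a cell lying inside a single sphere, convexity of Cygan spheres (recorded in Section~\ref{sec-complex}) finishes the job immediately, exactly as in the earlier cases. For a cell that can only be covered by several overlapping spheres, I would first exhibit a common interior point of those spheres, so that their union is star-convex with respect to that point; I would then check that every vertex of the cone-polygon bounding the cell lies in the union. Star-convexity propagates the inclusion from the finitely many boundary vertices to the whole cell, and each individual inclusion reduces to evaluating the defining Cygan inequality at a chosen point, a finite collection of numerical checks.

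The main obstacle will be the combinatorial bookkeeping rather than any single inequality: one must choose the auxiliary points so that every resulting cell is genuinely covered by some star-convex union of spheres, and one must confirm that two adjacent cells are separated by a face lying in the common interior of the spheres covering both sides, so that the coverings patch together over all of $\Sigma_{11}$. Organizing this decomposition correctly is the delicate step, and it is precisely here that the cone-polygon formalism of \cite{z2} is needed, since it provides the language in which the boundary of each cell, and hence its membership in a star-convex union, can be verified systematically.
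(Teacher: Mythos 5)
Your proposal follows essentially the same route as the paper's own proof: the paper decomposes $\Sigma_{11}$ (after a cut-and-paste adjustment of the fundamental prism) into nine polyhedral cells using a long list of auxiliary points, places seven cells inside single convex spinal spheres, and places the remaining two inside star-convex unions $\mathcal{U}_1$ and $\mathcal{U}_2$ of three spheres each, star-convexity being certified by exhibiting a common interior point — exactly the strategy you outline, down to the appeal to the cone-polygon formalism of \cite{z2}. The one caveat is your phrase that star-convexity ``propagates the inclusion from the finitely many boundary vertices to the whole cell'': membership of the vertices alone does not suffice for a star-convex union; one needs each boundary face of the cell to lie inside a single convex sphere of the union (checked via that face's vertices), after which coning from the star point covers the cell — this is what the cone-polygon structure, and implicitly the paper, actually supplies.
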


\begin{proof}

We will use another fundamental domain of the stabilizer.  That is, a prism has vertices
$v_1^{\pm1}=(1/4-i\sqrt{11}/4,\pm \sqrt{11}/11), v_2^{\pm1}=(3/4+i\sqrt{11}/4,\pm \sqrt{11}/11), v_3^{\pm1}=(-1/4+i\sqrt{11}/4,\pm \sqrt{11}/11)$.
This   can be achieved by using   cut-and-paste technique. 
This makes determining the spinal spheres relatively easy for the us.

We need to add sixteen points on the faces of the prism $\Sigma_{11}$ in order to decompose the prism into nine polyhedra cells, in Heisenberg coordinates, these are given by
$$
\begin{array}{llll}
p_1=(0.2,0.2\times\sqrt{11},0.33166),&p_2=(0.2,0.2\times\sqrt{11},-0.27136),\\
p_3=(0.31,-0.630159,0.3196),&p_4=(0.31,-0.630159,0.3196,-0.283421),\\
p_5=(0.2,-0.663325,0.180907),&p_6=(1/4,-\sqrt{11}/4,2\sqrt{11}/33),\\
p_7=(0.31,-0.630159,0.0783929),&p_8=(-0.14,0.464327,0.53669),\\
p_9=(0.1,0.549808,0.512165),&p_{10}=(0.41,0.469386,0.454464),\\
p_{11}=(0.59,0.298496,0.404025),&p_{12}=(-0.23,0.729657,-0.0180907)\\
p_{13}=(0.1,0.759809,-0.0622211),&p_{14}=(0.62,0.397995,-0.01809)\\
p_{15}=(-0.1824,0.604952,0.242656),&p_{16}=(-1/4,\sqrt{11}/4,78\sqrt{11}/1100)\\
p_{17}=(0.6716,0.569133,0.428629),&p_{18}=(0.75,0.829156,0.063023)\\
p_{19}=(0.4676,0.414701,0.438324),&p_{20}=(0.578,0.829156,0.478197)\\
p_{21}=(0.35,\sqrt{11}/4,0.512569),&p_{22}=(0.08,\sqrt{11}/4,0.553273)\\
p_{23}=(0.5825,0.829156,-0.125504),&p_{24}=(0.3346,0.829156,-0.0881318)\\
p_{25}=(0.08,\sqrt{11}/4,-0.0497494),&p_{26}=(0.3,0.709156,-0.0992793)\\
p_{27}=(0.378801,0.656463,0.484678),&p_{28}=(0.2519,0.617501,0.190955)\\
p_{29}=(0.6179,0.507112,-0.148374),&p_{30}=(0.6733,0.574771,-0.173882)\\
p_{31}=(0.5264,0.463121,-0.166961),&p_{32}=(0.334,0.596992,-0.1197)\\
p_{33}=(0.4412,0.829156,0.49882),&p_{34}=(0.18184,0.829156,-0.0651023)\\
p_{33}=(0.15,0.747145,-0.0714856).
\end{array}$$

We focus on describing other polyhedral cells in the decomposition of the prism $\Sigma_{11}$. Let $\mathcal{U}_1$ denote the union of $\mathcal{S}_0^{(11)},{T^{(11)}}^{3}R^{(11)}_2R^{(11)}_1(\mathcal{S}_1^{(11)}),{T^{(11)}}^{2}R^{(11)}_2R^{(11)}_1(\mathcal{S}_1^{(11)})$. We verify that $p_{19}$ is in the intersection of the interiors of these three spinal spheres, which implies that $\mathcal{U}_1$ is a star-convex set about $p_{19}$. Analogously, we know $\mathcal{U}_2$, denoted by the union of $\mathcal{S}_1^{(11)},(R^{(11)}_1R^{(11)}_3)^{-1}(\mathcal{S}_2^{(11)}),T^{(11)}(R^{(11)}_1R^{(11)}_3)^{-1}(\mathcal{S}_2^{(11)})$, is a star-convex set about the point $(0.3719,0.7475,0.4459)$.

\begin{figure}
\begin{center}  
\includegraphics[height=3in]{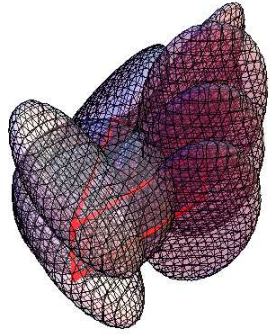}  
\caption{\small \sl The view of neighboring spinal spheres containing the fundamental domain for $(\Gamma_{s}^{(11)})_{\infty}$}  
\end{center}  
\end{figure}

We describe the nine polyhedra cells as follows:

 $\bullet$\ $\mathbb{P}_1^{(11)}$ with vertices  $v^+_1,p_1,p_3,p_5,p_6,p_7$;

$\bullet$\  $\mathbb{P}_2^{(11)}$ with vertices  $v^-_1,p_2,p_4,p_5,p_6,p_7$;

$\bullet$\  $\mathbb{P}_3^{(11)}$ with vertices  $p_1,p_3,p_{11},p_{19},p_{10},p_{9},p_{8},p_2,p_4,p_{14},p_{31},p_{32},p_{13},p_{12}$;

$\bullet$\  $\mathbb{P}_4^{(11)}$ with vertices  $v^+_3,p_8,p_9,p_{15},p_{16},p_{22}$;

$\bullet$\  $\mathbb{P}_5^{(11)}$ with vertices  $p_{10},p_{27},p_{21},p_{25},p_{13},p_{28},p_{9},p_{22},p_{12},v^-_{3},p_{15},p_{16}$;

$\bullet$\  $\mathbb{P}_6^{(11)}$ with vertices  $p_{14},p_{29},p_{31},p_{30}$;

$\bullet$\  $\mathbb{P}_7^{(11)}$ with vertices  $p_{11},p_{17},p_{19},p_{31},p_{30},p_{32},v^-_{2},p_{23},p_{18}$;

$\bullet$\  $\mathbb{P}_8^{(11)}$ with vertices  $p_{19},p_{17},v^+_2,p_{20},p_{27},p_{10},p_{28},p_{13},p_{26},p_{24},p_{23},p_{13},p_{32}$;

$\bullet$\  $\mathbb{P}_9^{(11)}$ with vertices  $p_{20},p_{33},p_{21},p_{27},p_{28},p_{13},p_{26},p_{24},p_{34},p_{25}$;

By examining the location of the points, we conclude that  $\mathbb{P}_1^{(11)}$ is inside the spinal sphere
$R^{(11)}_1T^{(11)}(\mathcal{S}_1^{(11)})$;  $\mathbb{P}_2^{(11)}$ is inside the spinal sphere
$R^{(11)}_1(\mathcal{S}_1^{(11)})$;  $\mathbb{P}_3^{(11)}$ is inside the spinal sphere
$\mathcal{S}_0^{(11)}$; $\mathbb{P}_4^{(11)}$ is inside the spinal sphere
$T^{(11)}(\mathcal{S}_1^{(11)})$; $\mathbb{P}_5^{(11)}$ is inside the spinal sphere
$\mathcal{S}_1^{(11)}$;  $\mathbb{P}_6^{(11)}$ is inside the spinal sphere
$T^{(11)}R^{(11)}_2R^{(11)}_1(\mathcal{S}_1^{(11)})$;  $\mathbb{P}_7^{(11)}$ is inside the spinal sphere
${T^{(11)}}^{2}R^{(11)}_2R^{(11)}_1(\mathcal{S}_1^{(11)})$; $\mathbb{P}_8^{(11)}$ is inside the star-convex set
$\mathcal{U}_1$;  $\mathbb{P}_9^{(11)}$ is inside the star-convex set
$\mathcal{U}_2$.

\end{proof}

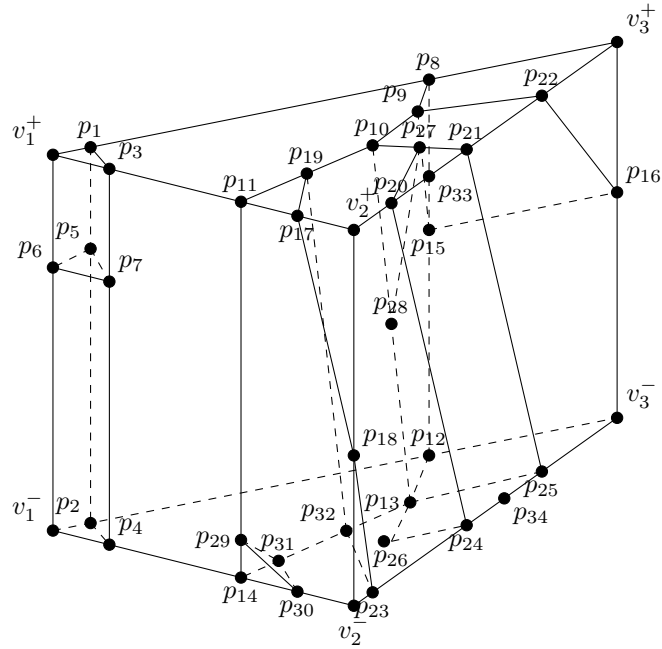
\begin{figure}
\begin{tikzpicture}[x=.5cm,y=.5cm]
\draw (0,0)--(15,3)--(8,-2)--cycle;
\draw (0,-10)--(8,-12)--(15,-7);
\draw[style=dashed] (0,-10) -- (15,-7);   
\draw (0,-10)--(0,0);  
\draw (8,-2)--(8,-12);  
 \draw (15,3)--(15,-7);    
\draw [fill] (0,0) circle [radius=0.15];
 \draw [fill] (0,-10) circle [radius=0.15];   
\draw [fill] (15,3) circle [radius=0.15];  \draw [fill] (15,-7) circle [radius=0.15]; 
 \draw [fill] (8,-2) circle [radius=0.15];   \draw [fill] (8,-12) circle [radius=0.15];
   
 \node[above left] at (0,0) {$v^{+}_1$};   \node[above left] at (0,-10) {$v^{-}_1$};  
  \node[above ] at (8.2,-1.9) {$v^{+}_2$};   \node[below] at (8,-12) {$v^{-}_2$};   
 \node[above right] at (15,3) {$v^{+}_3$};     \node[above right] at (15,-7) {$v^{-}_3$};

 \draw [fill] (0,-3) circle [radius=0.15];   \draw [fill] (1,1*3/15) circle [radius=0.15];     \draw [fill] (3/2,-3/8) circle [radius=0.15];   \draw [fill] (1,1*3/15-10) circle [radius=0.15];     \draw [fill] (3/2,-3/8-10) circle [radius=0.15]; 
 \draw [fill] (1,1*3/15-2.7) circle [radius=0.15];     \draw [fill] (3/2,-3/8-3) circle [radius=0.15];    
  \node[above ] at (1,3/15) {$p_1$};   \node[above left] at (1,3/15-10) {$p_2$};     \node[above right] at (3/2,-3/8) {$p_3$}; \node[above right ] at (3/2,-3/8-10) {$p_4$};   
  \node[above left] at (1,3/15-2.7) {$p_5$}; \node[above left] at (0,-3) {$p_6$};   \node[above right] at (3/2,-3/8-3) {$p_7$};         
 \draw[style=dashed] (1,3/15) -- (1,3/15-10);    \draw[style=dashed] (1,3/15-10) -- (3/2,-3/8-10);   
  \draw(3/2,-3/8)--(3/2,-3/8-10);\draw(1,3/15)--(3/2,-3/8);
  \draw[style=dashed] (1,3/15-2.7) -- (3/2,-3/8-3);    \draw[style=dashed] (1,3/15-2.7) -- (0,-3);     \draw(0,-3) -- (3/2,-3/8-3);

 \draw [fill] (10,10*3/15) circle [radius=0.15];    \draw [fill] (5,-5/4) circle [radius=0.15];      \draw [fill] (5+3.5,-5/4+1.5) circle [radius=0.15];    \draw [fill] (5+4.7,-5/4+2.4) circle [radius=0.15];    
 \draw(5,-5/4)--(5+3.5,-5/4+1.5); \draw(5+4.7,-5/4+2.4)--(5+3.5,-5/4+1.5);\draw(5+4.7,-5/4+2.4)--(10,10*3/15);
  \node[above ] at (10,10*3/15) {$p_8$};  \node[above left] at (5+4.7,-5/4+2.4) {$p_9$};  \node[above ] at (5+3.5,-5/4+1.5) {$p_{10}$};   \node[above ] at (5,-5/4) {$p_{11}$};             
        
 \draw [fill] (10,10*3/15-10) circle [radius=0.15];    \draw [fill] (5,-5/4-10) circle [radius=0.15];     \draw [fill] (5+4.5,-5/4+2-10) circle [radius=0.15];      
    \draw[style=dashed] (10,10*3/15-10)-- (5+4.5,-5/4+2-10);    \draw[style=dashed] (5+4.5,-5/4+2-10)--(5,-5/4-10);  
 \draw(5,-5/4) --(5,-5/4-10) ;  \draw[style=dashed] (10,10*3/15-10)-- (10,10*3/15);     \draw[style=dashed]  (5+4.5,-5/4+2-10)--(5+3.5,-5/4+1.5);     
      \node[above ] at (10,10*3/15-10) {$p_{12}$};       \node[left ] at (5+4.5,-5/4+2-10) {$p_{13}$};    \node[below ] at (5,-5/4-10) {$p_{14}$};

 \draw [fill] (10,10*3/15-4) circle [radius=0.15];    \node[below ] at (10,10*3/15-4) {$p_{15}$};   \draw [fill] (15,3-4) circle [radius=0.15];    \node[above right ] at (15,3-4) {$p_{16}$};    
   \draw [fill] (6.5,-1.625) circle [radius=0.15];    \node[below] at (6.5,-1.625){$p_{17}$};    \draw [fill] (8,-2-6) circle [radius=0.15];    \node[above right ] at (8,-2-6) {$p_{18}$};       
       \draw [fill] (6.75,-0.5) circle [radius=0.15];    \node[above  ] at (6.75,-0.5) {$p_{19}$};    
       
   \draw [fill] (9,5*1/7-2) circle [radius=0.15];   \node[above  ] at (9,5/7-2) {$p_{20}$};   \draw [fill] (11,5*3/7-2) circle [radius=0.15];   \node[above  ] at (11,5*3/7-2) {$p_{21}$};  
     \draw [fill] (13,5*5/7-2) circle [radius=0.15];  \node[above  ] at (13,5*5/7-2) {$p_{22}$};             
        \draw [fill] (8.5,5*0.5/7-2-10) circle [radius=0.15];  \node[below  ] at (8.5,5*0.5/7-2-10) {$p_{23}$}; 
           \draw [fill] (11,5*3/7-2-10) circle [radius=0.15];  \node[below  ] at (11,5*3/7-2-10) {$p_{24}$}; 
           \draw [fill] (13,5*5/7-2-10) circle [radius=0.15];      \node[below ] at (13,5*5/7-2-10) {$p_{25}$};            
             \draw [fill] (9-0.2,5*1/7-2-9) circle [radius=0.15];  \node[below] at (9,5/7-2-9) {$p_{26}$};  
      \draw [fill] (39/4,11/56) circle [radius=0.15];  \node[above  ] at (39/4,11/56) {$p_{27}$};  
       \draw [fill] (9,-9/2) circle [radius=0.15];  \node[above  ] at (9,-9/2) {$p_{28}$};      \draw [fill]  (5,-5/4-9)  circle [radius=0.15];   \node[left ] at (5,-5/4-9) {$p_{29}$};  
        \draw [fill]  (6.5,-93/8)  circle [radius=0.15];   \node[below ] at (6.5,-93/8)  {$p_{30}$};       
       
       \draw [fill]  (6,4/9-45/4)  circle [radius=0.15];   \node[above ] at (6,4/9-45/4)   {$p_{31}$};       
          \draw [fill]  (7.8,56/45-45/4)  circle [radius=0.15];   \node[above left ] at (7.8,56/45-45/4)    {$p_{32}$};    
          
     \draw [fill]  (10,10/7-2)  circle [radius=0.15];    \node[below right ] at (10,10/7-2)    {$p_{33}$};      
        \draw [fill]  (12,5*4/7-2-10)  circle [radius=0.15];    \node[below right ] at (12,5*4/7-2-10)   {$p_{34}$};

 \draw[style=dashed] (5,-5/4-9) --(6,4/9-45/4);   \draw(5,-5/4-9) --(6.5,-93/8) ;  \draw[style=dashed] (6.5,-93/8) --(6,4/9-45/4);                
    \draw(6.75,-0.5)--(6.5,-1.625);     \draw(8,-2-6)--(6.5,-1.625);    \draw(8,-2-6)--(8.5,5*0.5/7-2-10);    
    \draw[style=dashed] (7.8,56/45-45/4) --(6.75,-0.5);   \draw[style=dashed] (7.8,56/45-45/4)  --(8.5,5*0.5/7-2-10);  
   \draw (5+3.5,-5/4+1.5)  --(11,5*3/7-2);  
   \draw[style=dashed] (13,5*5/7-2-10)  --(5+4.5,-5/4+2-10);  
  \draw(11,5*3/7-2) --(13,5*5/7-2-10);   
     \draw(39/4,11/56)  --(9,5/7-2) ;   \draw(11,5*3/7-2-10) --(9,5/7-2) ;  \draw[style=dashed] (39/4,11/56)  -- (9,-9/2);     
 \draw[style=dashed] (9,5/7-2-9) --(5+4.5,-5/4+2-10);   \draw[style=dashed] (9,5/7-2-9) --(11,5*3/7-2-10);   
  
  \draw[style=dashed] (10,10*3/15-4) --(15,3-4); \draw[style=dashed] (10,10*3/15-4) --(5+4.7,-5/4+2.4) ; \draw (13,5*5/7-2)-- (15,3-4);   \draw (13,5*5/7-2)-- (5+4.7,-5/4+2.4);               
                                
   \end{tikzpicture}  
  \caption{The decomposition of the fundamental domain for $(\Gamma_{s}^{(11)})_{\infty}$ in Heisenberg group}
  
  \end{figure}

\section*{Acknowledgements}

The author would like to thank Professor
E. Falbel for   useful  discussions  during his visit to 
Pierre and Marie Curie University. I also thank Jieyan Wang for  discussion on this paper. 
This work was supported by  NSF(No.11201134 and No. 11071059).


\begin{thebibliography}{99}

\bibitem{dfp} M. Deraux, E. Falbel, and J. Paupert, \emph{New constructions of fundamental polyhedra
in complex hyperbolic space.} Acta Math., \textbf{194}(2005),155--201.
\bibitem{dpp}M. Deraux, J. R. Parker, and J. Paupert, \emph{New non-arithmetic complex hyperbolic lattices.} 
arXiv:1401.0308, to appear in Inventiones Mathematicae.

\bibitem{fp}E. Falbel, J. R. Parker, \emph{The geometry of the Eisenstein-Picard
modular group},  Duke Math. J. \textbf{131}(2006), 249--289.
\bibitem{ffp}E. Falbel, G. Francsics, J.R. Parker, \emph{The geometry of Gauss-Picard modular group},  Math. Ann. \textbf{349}(2011), 459--508.

\bibitem{fflp}E. Falbel, G. Francsics, P.D. Lax, J.R. Parker,
\emph{Generators of a Picard modular group in two complex dimensions},
 Proc. Amer. Math. Soc. \textbf{139}(2011), 2439--2447.


\bibitem{g} W. M. Goldman, \emph{Complex Hyperbolic Geometry}
(Clarendon, Oxford, 1999).

\bibitem{m}
    G. D. Mostow.
  \emph{ On a remarkable class of polyhedra in complex hyperbolic
space.}
     Pacific J. Mathematics. \textbf{86}(1980), 171-276.

\bibitem{p1}J. R. Parker, \emph{On the volumes of cusped, complex hyperbolic manifolds and orbifolds.}
Duke Math. J. \textbf{94}:3(1998), 433-464.

\bibitem{p2}J. R. Parker, \emph{Cone metrics on the sphere and Livn\'e's lattices.} Acta Math., \text{196}(2006), 1--64.

\bibitem{p3} J. R. Parker,\emph{Complex hyperbolic lattices}, pp. 1--42 in Discrete groups and geometric
structures, edited by K. Dekimpe et al., Contemp. Math. 501, Amer. Math. Soc., Providence,
RI, 2009.

\bibitem{pi1}E. Picard,  \emph{Sur une extension aux fonctions de deux variables independentes analogues aux fonctions modulaires.} Acta. Math.  \textbf{2}(1883), 114-135.
\bibitem{pi2}E. Picard, \emph{Sur des formes quadratiques ternaires ind efinies indetermin ees conjugu ees et sur les fonctions hyperfuchsiennes correspondantes.} Acta. Math.  \textbf{5}(1884), 121-182.


\bibitem{wxx}J. Wang, Y. Xiao and B. Xie, \emph{Generators of the Eisenstein-Picard
modular groups}, J. Aust. Math. Soc. \textbf{91}(2011), 421--429.


\bibitem{xwj1}B. Xie, J. Wang and Y. Jiang, \emph{Generators of the Eisenstein-Picard modular groups in three complex dimensions },  Glasgow. Math. J. \textbf{55}(2013), 645--654.

\bibitem{xwj2}B. Xie, J. Wang and Y. Jiang, \emph{Generators of the Gauss-Picard modular groups in three complex dimensions },  Pac. J. Math. \textbf{273}:1(2015), 197--211.

\bibitem{z1}T. Zhao, \emph{A minimal volume arithmetic cusped complex hyperbolic orbifold}, Math.
Proc. Cambridge Philos. Soc. \textbf{150}:2 (2011), 313--342.
\bibitem{z2}T. Zhao, \emph{Generators for the
Euclidean Picard modular groups}, Tran. Amer. Math. Soc. \text{364}(2012), 3241--3263.




\end{thebibliography}
\end{document}